\DeclareMathAlphabet{\mathpzc}{OT1}{pzc}{m}{it}
\newcommand{\norm}[1]{{\left\vert\kern-0.25ex\left\vert\kern-0.25ex\left\vert #1 
\right\vert\kern-0.25ex\right\vert\kern-0.25ex\right\vert}}
\newcommand{\TheTitle}{Finite element approximation for a convective Brinkman--Forchheimer problem coupled with a heat equation}
\newcommand{\ShortTitle}{A Convective Brinkman--Forchheimer model and a heat equation}
\newcommand{\TheAuthors}{G.~Campa\~na, P. Mu\~noz and E.~Ot\'arola}
\headers{\ShortTitle}{\TheAuthors}
\title{{\TheTitle}\thanks{GC is partially supported by ANID through Subdirecci\'on de Capital Humano/Doctorado Nacional/2020--21200920. EO is partially supported by ANID through FONDECYT project 1220156. PM is supported by UTFSM through Programa de Incentivo a la Investigaci\'on Cient\'ifica (PIIC).}}
\author{Gilberto Campa\~na\thanks{Departamento de Ciencias, Universidad T\'ecnica Federico Santa Mar\'ia, Valpara\'iso, Chile. (\email{gilberto.campana@usm.cl}).}
\and
Pablo Mu\~noz\thanks{Departamento de Matem\'atica, Universidad T\'ecnica Federico Santa Mar\'ia, Valpara\'iso, Chile. (\email{pablo.munozssu.14@sansano.usm.cl}).}
\and
Enrique Ot\'arola\thanks{Departamento de Matem\'atica, Universidad T\'ecnica Federico Santa Mar\'ia, Valpara\'iso, Chile. (\email{enrique.otarola@usm.cl}, \url{http://eotarola.mat.utfsm.cl/}).}
}
\date{Draft version of \today.}
\begin{document}

\maketitle

\begin{abstract}
We investigate a convective Brinkman--Forchheimer problem coupled with a heat equation. The investigated model considers thermal diffusion and viscosity depending on the temperature. We prove the existence of a solution without restriction on the data and uniqueness when the solution is slightly smoother and the data is suitably restricted. We also propose a finite element discretization scheme for the considered model and derive convergence results and a priori error estimates. Finally, we illustrate the theory with numerical examples.
\end{abstract}

\begin{keywords}
non-isothermal flows, nonlinear equations, a convective Brinkman--Forchheimer problem, a heat equation, finite element approximations, convergence, a priori error estimates.
\end{keywords}

\begin{AMS}
35A01,  
35A02,  
35Q35,  
65N12,  
65N15,  
76S05.  
\end{AMS}


\section{Introduction}
\label{sec:intro}
Let $\Omega\subset\mathbb{R}^d$, with $d\in\{2,3\}$, be an open and bounded domain with Lipschitz boundary $\partial\Omega$. In this work, we continue our research program focused on the analysis and approximation of fluid flow models coupled with a heat equation. In particular, we are now interested in developing finite element methods for the temperature distribution of a fluid modeled by a convection--diffusion equation coupled with the convective Brinkman--Forchheimer equations. This model can be described by the following \emph{nonlinear} system of partial differential equations (PDEs):
\begin{equation}\label{eq:model}
	\left\{
	\begin{aligned}
		-\text{div}(\nu(T)\nabla\mathbf{u}) + (\mathbf{u}\cdot\nabla)\mathbf{u}+ \mathbf{u} +|\mathbf{u}|^{s-2}\mathbf{u}+ \nabla \mathsf{p}  &=  \mathbf{f} \text{ in }\Omega,
		\quad
		\text{div }\mathbf{u}                                                                                                                 =   0 \text{ in }\Omega, \\
		-\text{div}(\kappa(T)\nabla T)+\mathbf{u}\cdot\nabla T                                                                                &=  \textnormal{g} \text{ in }\Omega,
	\end{aligned}
	\right.
\end{equation}
supplemented with the Dirichlet boundary conditions $\mathbf{u}=\mathbf{0}$ and $T=0$ on $\partial \Omega$. The data of the model are the external density force $\mathbf{f}$, the external heat source $\textnormal{g}$, the viscosity coefficient $\nu$, and the thermal diffusivity coefficient $\kappa$. We note that $\nu(\cdot)$ and $\kappa(\cdot)$ are coefficients that can depend nonlinearly on the temperature. The parameter $s$ is chosen so that  $s\in[3,4]$. The unknowns of the system are the velocity field $\mathbf{u}$, the pressure $\mathsf{p}$, and the temperature $T$ of the fluid.

The analysis and discretization of various incompressible, non-isothermal flows have become increasingly important for a variety of research areas in science and technology. This is due to the numerous applications in industry, e.g., in the design of heat exchangers and chemical reactors, cooling processes, and polymer processing, to name but a few. For advances in the numerical approximation of non-isothermal flows governed by the Navier Stokes equations and a suitable temperature equation, as well as for the so-called Boussinesq problem and its generalizations, we refer the interested reader to \cite{MR3342221,MR2111747,MR4053090,MR4265062,MR1364404,MR1051838,MR2802085,MR1681112,MR3232446,MR2135787}. In this context, we also mention the works \cite{ACFO:22,MR3802674,MR3523581,MR4036533,MR4041519,GMRB} for similar results when the Navier--Stokes equations are replaced by the Darcy equations. Finally, we refer the reader to \cite{MR4659334,MR4448201,MR4337455,MR4684228} for the analysis of finite element schemes for the coupling of a heat equation with the so-called Darcy--Forchheimer equations.

Now that we have briefly explained the importance of non-isothermal flows and mentioned some applications, we would like to turn our attention to the so-called convective Brinkman--Forchheimer model. In the following, we will divide the discussion into two parts:

$\bullet$ The Darcy--Forchheimer equations: As stated in \cite{MR2425154}, Darcy's law, namely $\mathbf{u} = - K \nabla \mathsf{p}/\mu$, is a linear relation that describes the creep flow of Newtonian fluids in porous media. It is supported by years of experimental data and has numerous applications in engineering. However, based on flow experiments in sand packs, Forchheimer realized that Darcy's law was not adequate for moderate Reynolds numbers. He found that the relationship between the pressure gradient and the Darcy
velocity was nonlinear and that this nonlinearity appeared to be quadratic for a variety of experimental data \cite{forchheimer1901wasserbewegung}; see \cite[page 162]{MR2425154} and \cite[Section 6, page 55]{whitaker1996forchheimer}. This modification of the Darcy equations is usually referred to as the Darcy--Forchheimer equations and has been studied in detail in several papers. We refer the reader to the following non-exhaustive list of references  \cite{MR4092292,MR4049400,MR2425154,MR2948707,MR3022234}. As mentioned in \cite{Beyond_Beta}, Forchheimer also noted in his 1901 publication that some data sets could not be described by the quadratic correction. Therefore, he suggested adding a cubic term to describe the behavior of the observed flow in these cases. He also postulated that the Darcy's law correction could allow a polynomial expression in $\mathbf{u}$, e.g., $\mathbf{u} + |\mathbf{u}|\mathbf{u} + |\mathbf{u}|^{2}\mathbf{u}$ and $\mathbf{u} + |\mathbf{u}|^{s-2}\mathbf{u}$; see \cite[page 59]{muskat1937flow}, \cite[\S 2.3]{firdaouss1997nonlinear}, and \cite[page 12]{straughan2008stability}. In practice, the exponent $s$ takes the value $3$ and $4$ in several applications; see \cite{balhoff2010polynomial,firdaouss1997nonlinear,MR2027361,MR1090724,MR1856630,rojas1998nonlinear,skjetne1999new,MR3636305} for the case $s = 4$. As mentioned in \cite[page 133]{skjetne1999new}, the use of the fractional value $s = 7/2$ is also considered in the literature. For these reasons, we consider the parameter $s \in [3,4]$ in our work.

$\bullet$ The convective Brinkman--Forchheimer equations: The inclusion of $-\Delta \mathbf{u}$ and the convective term $(\mathbf{u} \cdot \nabla) \mathbf{u}$ in the Darcy--Forchheimer equations leads to the so-called convective Brinkman--Forchheimer model \eqref{eq:model}. Assuming a two-dimensional stationary, isotropic, incompressible, homogeneous flow through a fluid-saturated porous medium, this system was derived by the authors of \cite{VAFAI1981195} as the governing momentum equation based on local volume averaging and matched asymptotic expansion; see also \cite{lastone}. Further justifications for the inclusion of the so-called Brinkman and convective terms in \eqref{eq:model} were later presented in \cite{VAFAI199511}. We refer the interested reader to \cite{ACO:24:JSC,MR4328398,guo2005lattice,MR3967591,MR3636305} for further insights, analysis, and applications of this model.

Our problem \eqref{eq:model} is related to the coupling of the Brinkman--Forchheimer equations and the so-called double-diffusion equations. There are several articles in the literature on the analysis and discretization of this coupling; see, e.g., \cite{MR1418526,MR3820815,MR4171897,MR4344862,MR4659442}. With the exception of the recently published work \cite{MR4659442}, all the previously cited papers consider the case $s=3$. On the other hand, the models considered in \cite{MR4171897,MR4344862,MR4659442} couple the concentration variable in the forcing term of the momentum equations of the Brinkman--Forchheimer system as $\mathbf{f}(\boldsymbol{\phi})$ for physical reasons other than ours. In contrast, in our work we follow the approach considered in \cite{MR4673339,MR3802674,MR4041519,MR3267163}, which takes up the physical considerations presented in \cite{Hooman,MR3168603}, and considers viscosity and thermal diffusion coefficients, which may depend nonlinearly on the temperature $T$.

As far as we know, this is the first paper dealing with the analysis and numerical approximation of the nonlinear coupled problem \eqref{eq:model} in its present form.
Since the problem involves several sources of nonlinearity, the analysis and discretization are anything but trivial. In the following, we list what we consider to be the most important contributions of our work:

$\bullet$ \emph{Existence and uniqueness of solutions:} We introduce a concept of weak solution for the nonlinear problem \eqref{eq:model} and show the existence of solutions without restriction on the data; the latter is derived using Galerkin's method and Brouwer's fixed point theorem. Moreover, we obtain a global uniqueness result when the solution is slightly smoother, and the data is suitably restricted.

$\bullet$ \emph{Finite element approximation:} We propose a finite element discretization for the system \eqref{eq:model} based on the following two classical inf-sup stable pairs: the \emph{Taylor--Hood element} and the \emph{mini element}. We approximate the temperature variable of the fluid with continuous piecewise linear/quadratic finite elements. As in the continuous case, we show the existence of solutions without restriction on the data and global uniqueness when the solution is slightly smoother, and the data is suitably restricted. We also show the existence of a subsequence of discrete solutions that converge to a solution of the continuous problem \eqref{eq:model}. \emph{This result holds without assumptions on the data and solutions beyond what is required to obtain well-posedness.}

$\bullet$ \emph{A priori error estimates:} We derive a quasi-best approximation result for the proposed numerical method. In doing so, we assume suitable smallness conditions and regularity assumptions for the solution. Under the moderate assumption that $(\mathbf{u},\mathsf{p},T) \in \mathbf{H}^{3}(\Omega) \times H^2(\Omega) \times H^{3}(\Omega)$, we obtain optimal error estimates in a standard energy norm for the approximation with the Taylor--Hood element. For the approximation with the mini element, we obtain optimal error estimates in such a standard energy norm assuming that $(\mathbf{u},\mathsf{p},T) \in \mathbf{H}^{2}(\Omega) \times H^1(\Omega) \times H^{2}(\Omega)$.

$\bullet$ \emph{Numerical simulations:} We computationally investigate the effects of the Forchheimer exponent $s$ in the so-called heated lid-driven cavity flow problem and show how this parameter affects the velocity and pressure of the fluid and, in particular, the position of the counter-rotating vortices that occur in the cavity.

The structure of our manuscript is outlined below. In section \ref{sec:notation}, we establish the notation and the preliminary material. In section \ref{sec:coupled_problem}, we present a weak formulation for the system \eqref{eq:model} and show the existence of solutions without restrictions on the data and uniqueness under suitable smallness conditions on the data. In section \ref{sec:fem}, we develop a finite element scheme, investigate its convergence properties, and derive a priori error estimates to control the error in the approximation of the velocity, pressure, and temperature variables. We conclude in section \ref{sec:numericalexample} with a series of numerical experiments that illustrate and go beyond the theory.


\section{Notation and preliminary remarks}
\label{sec:notation}
We begin this section by establishing the notation and the framework within which we will work.


\subsection{Notation}
We use the standard notation for Lebesgue and Sobolev spaces. The spaces of vector-valued functions and the vector-valued functions themselves are denoted by bold letters. In particular, we set
$
\mathbf{V}(\Omega):=\{\mathbf{v}\in\mathbf{H}_0^1(\Omega):\text{div }\mathbf{v}=0\}.
$

If $\mathcal{X}$ and $\mathcal{Z}$ are Banach function spaces, we write $\mathcal{X} \hookrightarrow \mathcal{Z}$ to denote that $\mathcal{X}$ is continuously embedded in $\mathcal{Z}$. We denote by $\mathcal{Z}'$ and $\|\cdot\|_{\mathcal{Z}}$ the dual and the norm of $\mathcal{Z}$, respectively. We denote by $\langle \cdot, \cdot \rangle_{\mathcal{Z}',\mathcal{Z}'}$ the duality paring between $\mathcal{Z}'$ and $\mathcal{Z}$;  if the underlying spaces are clear from the context, we simply write $\langle \cdot, \cdot \rangle$. Given $\mathfrak{p} \in (1,\infty)$, we denote by $\mathfrak{p}\prime \in (1,\infty)$ its H\"older conjugate, i.e., $\mathfrak{p}\prime$ is such that $1/\mathfrak{p} + 1/\mathfrak{p}\prime= 1$. The relation $a\lesssim b$ means that $a\leq Cb$, where $C$ is a constant that does not depend on $a$, $b$, or the discretization parameters. The value of $C$ can change each time it occurs. If the specific value of a constant is important, we give it a name. For example, in our work we use $\mathcal{C}_{4 \hookrightarrow 2}$ to denote the
best constant in the embedding $H_0^1(\Omega) \hookrightarrow L^4(\Omega)$. To simplify the notation, we also use $\mathcal{C}_{4 \hookrightarrow 2}$ in the vectorial case $\mathbf{H}_0^1(\Omega) \hookrightarrow \mathbf{L}^4(\Omega)$.


\subsection{A convective Brinkman--Forchheimer problem}
\label{sec:Brinkman_Darcy_Forchheimer}
In this section, we investigate existence and uniqueness results for the following weak formulation of a convective Brinkman--Forchheimer problem: Given an external density force $\mathbf{f} \in \mathbf{H}^{-1}(\Omega)$, find a velocity-pressure pair $(\mathbf{u},\mathsf{p})\in \mathbf{H}_0^1(\Omega)\times L_0^2(\Omega)$ such that
\begin{equation}
\label{eq:brinkman_darcy_forchheimer_problem}
\begin{array}{rcl}
\displaystyle
\int_\Omega
\left(
\nu\nabla\mathbf{u}\cdot\nabla\mathbf{v}
+
(\mathbf{u}\cdot\nabla)\mathbf{u}\cdot\mathbf{v}
+
\mathbf{u}\cdot\mathbf{v}
+
|\mathbf{u}|^{s-2}\mathbf{u}\cdot\mathbf{v}
-
\mathsf{p}\text{div }\mathbf{v}
\right)
\mathrm{d}x
&
=
&
\langle\mathbf{f}, \mathbf{v}\rangle,
\\
\displaystyle
\int_\Omega \mathsf{q}\text{div }\mathbf{u}\mathrm{d}x
&
=
&
0,
\end{array}
\end{equation}
for all $\mathbf{v}\in \mathbf{H}_0^1(\Omega)$ and $\mathsf{q}\in {L}_0^2(\Omega)$,  respectively. The function $\nu$ belongs to $C^{0,1}(\mathbb{R})$, and it is assumed to be strictly positive and bounded. Namely, we assume that there are positive constants $\nu_{-}$ and $\nu_{+}$ such that $0<\nu_{-} \leq \nu(r)\leq \nu_{+}$ for every $ r \in \mathbb{R}$. We denote by $\mathcal{L}_{\nu}$ the Lipschitz constant of the function $\nu$.

With the exception of $\mathrm{I}_s:= \int_{\Omega} |\mathbf{u}|^{s-2}\mathbf{u}\cdot\mathbf{v} \mathrm{d}x$, all terms in system \eqref{eq:brinkman_darcy_forchheimer_problem} are trivially well-defined in our space setting. A bound for $\mathrm{I}_s$ can be derived as follows:
\begin{equation}
 \left| \mathrm{I}_s \right|
 \leq
 \| \mathbf{u} \|_{\mathbf{L}^{\tau(s-2)}(\Omega)}^{s-2} \| \mathbf{u} \|_{\mathbf{L}^{\mu}(\Omega)} \| \mathbf{v} \|_{\mathbf{L}^{\mu}(\Omega)},
 \qquad
\tau^{-1} + 2\mu^{-1} = 1,
\label{eq:estimate_for_Forchheimer_1}
\end{equation}
where we have used H\"older's inequality. We now apply the standard Sobolev embeddings from \cite[Theorem 4.12, Cases \textbf{B} and \textbf{C}]{MR2424078} to conclude that $\mathbf{H}_0^1(\Omega) \hookrightarrow \mathbf{L}^{\iota}(\Omega)$ for $\iota < \infty$ when $d=2$ and $\iota \leq 6$ when $d=3$. We can, therefore, set $\tau = q$ for some $q >1$ in two dimensions and $\tau = 3/2$ in three dimensions. It follows that $s- 2 \in [1,2]$ and therefore that $\tau(s-2) \leq 2\tau \leq 3$. From this, we can derive the estimate
\begin{equation}
 \left| \mathrm{I}_s
 \right|
 \lesssim
 \| \nabla \mathbf{u} \|_{\mathbf{L}^{2}(\Omega)}^{s-1} \| \nabla \mathbf{v} \|_{\mathbf{L}^{2}(\Omega)},
\label{eq:estimate_for_Forchheimer_2}
\end{equation}
by again using the Sobolev embedding $\mathbf{H}_0^1(\Omega) \hookrightarrow \mathbf{L}^{\iota}(\Omega)$.

\begin{remark}[boundedness of $\mathrm{I}_s$]
Let us first note that the bound \eqref{eq:estimate_for_Forchheimer_2} is not the only way to control the term $\mathrm{I}_s$. Our bound exploits the maximum $L^{\iota}(\Omega)$-regularity of a function in $H_0^1(\Omega)$ and implies that the estimate \eqref{eq:estimate_for_Forchheimer_2} is also valid for larger values of the parameter $s$. Nevertheless, in our work, we consider $s \in [3,4]$ due to the physical considerations discussed in the introduction.
\end{remark}

\subsubsection{Existence}
To present existence and uniqueness results for system \eqref{eq:brinkman_darcy_forchheimer_problem}, we introduce the bilinear forms $a_{L}: [\mathbf{H}_0^1(\Omega)]^2 \to\mathbb{R}$ and $b: \mathbf{H}_0^1(\Omega)\times L_0^2(\Omega)\to\mathbb{R}$ by
\begin{align*}\label{eq:form_LINEAL}
a_{L}(\mathbf{u},\mathbf{v})
:=
\int_{\Omega} (\nu \nabla\mathbf{u}\cdot\nabla\mathbf{v}
+
\mathbf{u}\cdot\mathbf{v}) \mathrm{d}x,
\qquad b(\mathbf{v},\mathsf{q}):=-\int_{\Omega}  \mathsf{q}\,\text{div}\,\mathbf{v}\mathrm{d}x,
\end{align*}
respectively. We also introduce forms associated with the nonlinear terms $(\mathbf{u}\cdot\nabla)\mathbf{u}$ and $|\mathbf{u}|^{s-2}\mathbf{u}$ in \eqref{eq:brinkman_darcy_forchheimer_problem}. Namely, we define
$a_{N}: [\mathbf{H}_0^1(\Omega)]^3 \to \mathbb{R}$
and $a_{F}: [\mathbf{H}_0^1(\Omega)]^3 \to\mathbb{R}$ by
\begin{align*}\label{eq:form_NOLINEAL}
a_{N}(\mathbf{u};\mathbf{w},\mathbf{v})
:=
\int_{\Omega}(\mathbf{u}\cdot\nabla)\mathbf{w}\cdot\mathbf{v}\mathrm{d}x,
\qquad
a_{F}(\mathbf{u};\mathbf{w},\mathbf{v})
:=
\int_{\Omega} |\mathbf{u}|^{s-2}\mathbf{w}\cdot\mathbf{v}\mathrm{d}x,
\end{align*}
respectively. The form $a_{N}$ satisfies the following properties \cite[Chapter II, Lemma 1.3]{MR0603444}, \cite[Chapter IV, Lemma 2.2]{MR851383}: Let $\mathbf{u}\in \mathbf{V}(\Omega)$ and $\mathbf{v},\mathbf{w}\in \mathbf{H}_0^1(\Omega)$. Then, we have
\begin{align}\label{eq:properties_navier}
a_{N}(\mathbf{u};\mathbf{v},\mathbf{w})+a_{N}(\mathbf{u};\mathbf{w},\mathbf{v})=0,\qquad a_{N}(\mathbf{u};\mathbf{v},\mathbf{v})=0.
\end{align}
Moreover, $a_{N}$ is well-defined and continuous on $[\mathbf{H}_0^1(\Omega)]^3$: There is $\mathcal{C}_{N}>0$ such that
\begin{align}\label{eq:estimates_navier}
|a_{N}(\mathbf{u};\mathbf{v},\mathbf{w})|\leq \mathcal{C}_{N}\|\nabla \mathbf{u}\|_{\mathbf{L}^2(\Omega)}\|\nabla \mathbf{v}\|_{\mathbf{L}^2(\Omega)} \|\nabla \mathbf{w}\|_{\mathbf{L}^2(\Omega)};
\end{align}
see \cite[Chapter II, Lemma 1.1]{MR0603444} and \cite[Lemma IX.1.1]{MR2808162}. As a final ingredient, we introduce $a: [\mathbf{H}_0^1(\Omega)]^3 \to\mathbb{R}$ by $a(\mathbf{u}; \mathbf{u}, \mathbf{v}):=a_{L}(\mathbf{u},\mathbf{v})+a_{N}(\mathbf{u};\mathbf{u},\mathbf{v})+a_{F}(\mathbf{u};\mathbf{u},\mathbf{v})$.

Having introduced all these ingredients, we can rewrite the weak formulation \eqref{eq:brinkman_darcy_forchheimer_problem} as follows: Find $(\mathbf{u},\mathsf{p})\in \mathbf{H}_0^1(\Omega)\times  L_0^2(\Omega)$ such that
\begin{equation}\label{eq:problem_variational_forms}
		a(\mathbf{u}; \mathbf{u}, \mathbf{v})+b(\mathbf{v},\mathsf{p}) = \langle \mathbf{f}, \mathbf{v} \rangle
		\quad \forall \mathbf{v} \in \mathbf{H}_0^1(\Omega),
		\qquad
		b(\mathbf{u},\mathsf{q}) = 0
		\quad \forall \mathsf{q} \in L_0^2(\Omega).
\end{equation}
We note that, due to the Rham's theorem \cite[Theorem B.73]{Guermond-Ern}, problem \eqref{eq:problem_variational_forms} is equivalent to the following reduced formulation: Find $\mathbf{u}\in \mathbf{V}(\Omega)$ such that
\begin{align}\label{eq:problem_reduced}
a(\mathbf{u}; \mathbf{u}, \mathbf{v}) =  \langle \mathbf{f}, \mathbf{v} \rangle \quad \forall\mathbf{v}\in \mathbf{V}(\Omega).
\end{align}
An essential ingredient in the proof of \cite[Theorem B.73]{Guermond-Ern} is the fact that the divergence operator is surjective from $\mathbf{H}_0^1(\Omega)$ to $L_0^2(\Omega)$. This implies that there exists a positive constant $\beta$ such that \cite[Chapter I, Section 5.1]{MR851383}, \cite[Corollary B.71]{Guermond-Ern}
\begin{equation}\label{eq:infsup_cont}
\sup_{\mathbf{v}\in \mathbf{H}_0^1(\Omega)}\dfrac{(\mathsf{q},\text{div }\mathbf{v})_{L^2(\Omega)}}{\|\nabla \mathbf{v}\|_{\mathbf{L}^2(\Omega)}}\geq \beta \|\mathsf{q}\|_{L^2(\Omega)}\quad\forall \mathsf{q}\in L_0^2(\Omega).
\end{equation}

Given $\mathbf{f} \in \mathbf{H}^{-1}(\Omega)$, we define
\begin{equation}
\Lambda(\mathbf{f}) :=
1
+
\nu_{+} \nu_{-}^{-1}
+
\mathcal{C}_{N}\nu_{-}^{-2}\|\mathbf{f}\|_{\mathbf{H}^{-1}(\Omega)}
+
\mathcal{C}_{2 \hookrightarrow 2}^2 \nu_{-}^{-1}
+
\mathcal{C}_{s \hookrightarrow 2}^{s} \nu_{-}^{1-s} \|\mathbf{f}\|_{\mathbf{H}^{-1}(\Omega)}^{s-2},
\label{eq:Lambda_f}
\end{equation}
where $\mathcal{C}_{N}$ is as in \eqref{eq:estimates_navier} and $\mathcal{C}_{2 \hookrightarrow 2}$ and $\mathcal{C}_{s \hookrightarrow 2}$ denote the best constants in the Sobolev embeddings $\mathbf{H}^1_0(\Omega) \hookrightarrow \mathbf{L}^2(\Omega)$ and $\mathbf{H}^1_0(\Omega) \hookrightarrow \mathbf{L}^s(\Omega)$, respectively.

We now present an existence result without restrictions on the data.

\begin{theorem}[existence and stability bound]\label{th:BDF_exist}
There exists at least one solution $(\mathbf{u},\mathsf{p})\in \mathbf{H}_0^1(\Omega)\times L_0^2(\Omega)$ for problem \eqref{eq:problem_variational_forms}. Moreover, $(\mathbf{u},\mathsf{p})$ satisfies the bound
\begin{align}
\label{eq:estimate_grad_u}
\|\nabla \mathbf{u}\|_{\mathbf{L}^2(\Omega)}
& \leq \nu_{-}^{-1} \|\mathbf{f}\|_{\mathbf{H}^{-1}(\Omega)},
\\
\label{eq:estimate_p}
\| \mathsf{p} \|_{L^2(\Omega)}
&
\leq
\beta^{-1}\Lambda(\mathbf{f})\|\mathbf{f}\|_{\mathbf{H}^{-1}(\Omega)},
\end{align}
where $\beta$ corresponds to the constant in \eqref{eq:infsup_cont}.
\end{theorem}
\begin{proof}
	Given the equivalence of problems \eqref{eq:problem_variational_forms} and \eqref{eq:problem_reduced}, we analyze problem \eqref{eq:problem_reduced}. We apply the abstract framework developed in \cite[Chapter IV, Theorem 1.2]{MR851383} to our case and divide the proof into three steps.

	\emph{Step 1. Let $\mathbf{w} \in \mathbf{V}(\Omega)$. The bilinear form $a(\mathbf{w};\cdot,\cdot)$ is uniformly $\mathbf{V}(\Omega)$-elliptic.} Let $\mathbf{v}\in\mathbf{V}(\Omega)$. Since $\nu$ is such that $\nu(r) \geq \nu_{-}>0$ for every $r \in \mathbb{R}$, we deduce that
		\begin{align*}
			a(\mathbf{w} ; \mathbf{v}, \mathbf{v})\geq \nu_{-}\|\nabla \mathbf{v}\|_{\mathbf{L}^2(\Omega)}^2+\|\mathbf{v}\|_{\mathbf{L}^2(\Omega)}^2 + ( |\mathbf{w}|^{s-2},|\mathbf{v}|^2 )_{L^2(\Omega)} \geq \nu_{-}\|\nabla \mathbf{v}\|_{\mathbf{L}^2(\Omega)}^2,
		\end{align*}
	where we have used that $a_N(\mathbf{w}; \mathbf{v},\mathbf{v}) = 0$ because $\mathbf{w} \in \mathbf{V}(\Omega)$.

	\emph{Step 2. For all $\mathbf{v} \in \mathbf{V}(\Omega)$, the mapping $\mathbf{u} \mapsto a(\mathbf{u};\mathbf{u},\mathbf{v})$ is sequentially weakly continuous on $\mathbf{V}(\Omega)$.} Let $\{ \mathbf{u}_{k} \}_{k \in \mathbb{N}}$ be a sequence in $\mathbf{V}(\Omega)$ such that $\mathbf{u}_{k}\rightharpoonup\mathbf{u}$ in $\mathbf{V}(\Omega)$ as $k\uparrow \infty$. A compact Sobolev embedding result \cite[Theorem 6.3, Part I]{MR2424078} guarantees that $\mathbf{u}_{k} \rightarrow \mathbf{u}$ in $\mathbf{L}^q(\Omega)$ as $k \uparrow \infty$ for $q<\infty$ in two dimensions and $q<6$ in three dimensions.
	Since $\mathbf{v}\in \mathbf{V}(\Omega)$, it is clear that
	$a_{L}(\mathbf{u}_k,\mathbf{v}) \to a_{L}(\mathbf{u},\mathbf{v})$ as $k\uparrow \infty$. The convergence result $a_{N}(\mathbf{u}_k;\mathbf{u}_k,\mathbf{v})\to a_{N}(\mathbf{u};\mathbf{u},\mathbf{v})$ as $k \uparrow \infty$ can be found in the proof of \cite[Chapter IV, Theorem 2.1]{MR851383}. We now prove that $a_{F}(\mathbf{u}_k;\mathbf{u}_k,\mathbf{v})\to a_{F}(\mathbf{u};\mathbf{u},\mathbf{v})$ as $k\uparrow \infty$. For this purpose, we use \cite[estimate (5.3.33)]{CiarletBook} or \cite[Lemma 5.3]{MR0388811} to obtain
\begin{multline}\label{eq:estimates_a_weakly_cont}
\left|
\int_\Omega (|\mathbf{u}_k|^{s-2}\mathbf{u}_k
- |\mathbf{u}|^{s-2}\mathbf{u})\cdot\mathbf{v} \mathrm{d}x
\right|
\lesssim
\displaystyle
\int_\Omega |\mathbf{u}_k-\mathbf{u}|(|\mathbf{u_k}|
+
|\mathbf{u}|)^{s-2}|\mathbf{v}|
\mathrm{d}x
\\
\displaystyle
\leq \|\mathbf{u_k}
-
\mathbf{u}\|_{\mathbf{L}^{\mu}(\Omega)}\||\mathbf{u_k}|
+
|\mathbf{u}|\|^{s-2}_{\mathbf{L}^{\tau(s-2)}(\Omega)}\|\mathbf{v}\|_{\mathbf{L}^{\mu}(\Omega)}
\displaystyle
\leq \Lambda_{s,\tau}\|\mathbf{u_k}
-
\mathbf{u}\|_{\mathbf{L}^{\mu}(\Omega)}\|\mathbf{v}\|_{\mathbf{L}^{\mu}(\Omega)},
\end{multline}
where $2 \mu^{-1} + \tau^{-1} = 1$ and $\Lambda_{s,\tau} = \Lambda_{s,\tau}(\mathbf{u})$ is such that
\begin{equation}\label{eq:Lambda_st}
	\||\mathbf{u_k}| +  |\mathbf{u}|\|^{s-2}_{\mathbf{L}^{\tau(s-2)}(\Omega)}
	\leq
	\mathcal{C}^{s-2}_{\tau(s-2) \hookrightarrow 2}
	\left[ \mathcal{M} +
	\| \nabla \mathbf{u} \|_{\mathbf{L}^{2}(\Omega)}
	\right]^{s-2}
	=:\Lambda_{s,\tau}(\mathbf{u}).
\end{equation}
Here, $\mathcal{C}_{\tau(s-2) \hookrightarrow 2}$ is the best constant in the Sobolev embedding $\mathbf{H}_0^1(\Omega) \hookrightarrow\mathbf{L}^{\tau(s-2)}(\Omega)$ and $\mathcal{M}$ is such that $ \| \nabla \mathbf{u}_k \|_{\mathbf{L}^{2}(\Omega)} \leq \mathcal{M}$ for every $k \in \mathbb{N}$. The constant $\tau$ is such that $\tau = q$ for some $q > 1$ in two dimensions, and $\tau > 3/2$ is arbitrarily close to $3/2$ in three dimensions. We again invoke the compact embedding of \cite[Theorem 6.3, Part I]{MR2424078} which guarantees that $\mathbf{u}_{k} \rightarrow \mathbf{u}$ in $\mathbf{L}^q(\Omega)$ as $k \uparrow \infty$ for $q<\infty$ in two dimensions and $q<6$ in three dimensions to conclude that $\mathbf{u}_{k}\to\mathbf{u}$ in $\mathbf{L}^\mu(\Omega)$ as $k\uparrow \infty$. As a result, we have obtained that $a_{F}(\mathbf{u}_k;\mathbf{u}_k,\mathbf{v})\to a_{F}(\mathbf{u};\mathbf{u},\mathbf{v})$ as $k\uparrow \infty$ and thus that $a(\mathbf{u}_k;\mathbf{u}_k,\mathbf{v})\to a(\mathbf{u};\mathbf{u},\mathbf{v})$ as $k\uparrow \infty$. The latter shows that $\mathbf{u} \mapsto a(\mathbf{u};\mathbf{u},\mathbf{v})$ is sequentially weakly continuous on $\mathbf{V}(\Omega)$, as we intended to prove.
	
	\emph{Step 3.} The desired result follows from an application of \cite[Chapter IV, Theorem 1.2]{MR851383}. To be precise, we can establish the existence of at least one $\mathbf{u}\in\mathbf{V}(\Omega)$ satisfying the reduced problem \eqref{eq:problem_reduced}. On the other hand, by the inf-sup condition \eqref{eq:infsup_cont}, we obtain that for every solution $\mathbf{u}\in\mathbf{V}(\Omega)$ of \eqref{eq:problem_reduced} there exists a unique $\mathsf{p}\in L_0^2(\Omega)$ such that the pair $(\mathbf{u},\mathsf{p})\in \mathbf{H}_0^1(\Omega)\times L_0^2(\Omega)$ satisfies \eqref{eq:problem_variational_forms}.

	\emph{Step 4.} Let $(\mathbf{u},\mathsf{p})\in \mathbf{H}_0^1(\Omega)\times L_0^2(\Omega)$ be a solution to problem \eqref{eq:problem_variational_forms}. If we set $(\mathbf{v},\mathsf{q}) = (\mathbf{u},0)$ as a test pair in problem \eqref{eq:problem_variational_forms}, we immediately obtain the bound
$
a_L(\mathbf{u},\mathbf{u}) + a_F (\mathbf{u};\mathbf{u},\mathbf{u})
\leq \|\mathbf{f}\|_{\mathbf{H}^{-1}(\Omega)}\|\nabla\mathbf{u}\|_{\mathbf{L}^2(\Omega)},
$
where we have used $\mathbf{u} \in \mathbf{V}(\Omega)$ and $a_N (\mathbf{u};\mathbf{u},\mathbf{u}) = 0$. This bound immediately shows that $\nu_{-}\|\nabla \mathbf{u}\|_{\mathbf{L}^2(\Omega)} \leq \|\mathbf{f}\|_{\mathbf{H}^{-1}(\Omega)}$. The estimate for the pressure, namely estimate \eqref{eq:estimate_p}, follows from the inf-sup condition \eqref{eq:infsup_cont}.
\end{proof}

\subsubsection{Uniqueness} In this section, we provide a uniqueness result for problem \eqref{eq:problem_variational_forms} under a suitable smallness condition.

\begin{theorem}[uniqueness for small data]\label{th:BDF_uniq}
In the framework of Theorem \ref{th:BDF_exist}, if $\mathbf{f}\in\mathbf{H}^{-1}(\Omega)$ is sufficiently small or $\nu$ sufficiently large such that
\begin{align}\label{eq:smalldata}
\nu_{-}^{-2}\mathcal{C}_{N}\|\mathbf{f}\|_{\mathbf{H}^{-1}(\Omega)}< 1,
\end{align}
where $\mathcal{C}_{N}$ is as in \eqref{eq:estimates_navier}, then there is a unique $(\mathbf{u},\mathsf{p})\in \mathbf{H}_0^1(\Omega)\times L_0^2(\Omega)$ that solves \eqref{eq:problem_variational_forms}.
\end{theorem}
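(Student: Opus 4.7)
The plan is to argue by contradiction: suppose $(\mathbf{u}_1,\mathsf{p}_1)$ and $(\mathbf{u}_2,\mathsf{p}_2)$ are two solution pairs in $\mathbf{H}_0^1(\Omega)\times L_0^2(\Omega)$ of \eqref{eq:problem_variational_forms}. Since both $\mathbf{u}_1,\mathbf{u}_2 \in \mathbf{V}(\Omega)$, by the equivalence with the reduced formulation \eqref{eq:problem_reduced}, it suffices to show $\mathbf{u}_1 = \mathbf{u}_2$; the pressure identification $\mathsf{p}_1 = \mathsf{p}_2$ will then follow immediately from the inf-sup condition \eqref{eq:infsup_cont}. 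Setting $\mathbf{w} := \mathbf{u}_1 - \mathbf{u}_2 \in \mathbf{V}(\Omega)$, I would subtract the two copies of \eqref{eq:problem_reduced} and test with $\mathbf{v} = \mathbf{w}$.

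The coercive part gives $a_L(\mathbf{w},\mathbf{w}) \geq \nu_-\|\nabla\mathbf{w}\|_{\mathbf{L}^2(\Omega)}^2$. For the convective contribution, the bilinearity of $a_N$ in its second argument together with the antisymmetry \eqref{eq:properties_navier} yields
\begin{equation*}
a_N(\mathbf{u}_1;\mathbf{u}_1,\mathbf{w}) - a_N(\mathbf{u}_2;\mathbf{u}_2,\mathbf{w}) = a_N(\mathbf{w};\mathbf{u}_1,\mathbf{w}) + a_N(\mathbf{u}_2;\mathbf{w},\mathbf{w}) = a_N(\mathbf{w};\mathbf{u}_1,\mathbf{w}),
\end{equation*}
since $a_N(\mathbf{u}_2;\mathbf{w},\mathbf{w}) = 0$. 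Bounding this term via \eqref{eq:estimates_navier} and the stability estimate \eqref{eq:estimate_grad_u} applied to $\mathbf{u}_1$ delivers $|a_N(\mathbf{w};\mathbf{u}_1,\mathbf{w})| \leq \mathcal{C}_N \nu_-^{-1}\|\mathbf{f}\|_{\mathbf{H}^{-1}(\Omega)}\|\nabla\mathbf{w}\|_{\mathbf{L}^2(\Omega)}^2$.

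For the Forchheimer contribution, the key fact (and the only step that requires genuine care) is the monotonicity of the vector field $\boldsymbol{\xi}\mapsto|\boldsymbol{\xi}|^{s-2}\boldsymbol{\xi}$ for $s\geq 3 \geq 2$, namely $(|\mathbf{a}|^{s-2}\mathbf{a}-|\mathbf{b}|^{s-2}\mathbf{b})\cdot(\mathbf{a}-\mathbf{b})\geq 0$ pointwise; this classical inequality (see, e.g., the arguments surrounding \cite[Lemma 5.3]{MR0388811}) shows that
\begin{equation*}
a_F(\mathbf{u}_1;\mathbf{u}_1,\mathbf{w}) - a_F(\mathbf{u}_2;\mathbf{u}_2,\mathbf{w}) = \int_\Omega \bigl(|\mathbf{u}_1|^{s-2}\mathbf{u}_1 - |\mathbf{u}_2|^{s-2}\mathbf{u}_2\bigr)\cdot(\mathbf{u}_1-\mathbf{u}_2)\,\mathrm{d}x \geq 0.
\end{equation*}
This is the main technical obstacle since the $a_F$ contribution has no definite sign a priori; the monotonicity of the Forchheimer nonlinearity is what makes it drop out on the favorable side.

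Combining the three estimates produces
\begin{equation*}
\bigl(\nu_- - \mathcal{C}_N\nu_-^{-1}\|\mathbf{f}\|_{\mathbf{H}^{-1}(\Omega)}\bigr)\|\nabla\mathbf{w}\|_{\mathbf{L}^2(\Omega)}^2 \leq 0,
\end{equation*}
and dividing by $\nu_-$ together with the smallness hypothesis \eqref{eq:smalldata} forces $\|\nabla\mathbf{w}\|_{\mathbf{L}^2(\Omega)}=0$, hence $\mathbf{u}_1 = \mathbf{u}_2$ by Poincar\'e's inequality. Finally, subtracting the first equation of \eqref{eq:problem_variational_forms} for the two solutions with $\mathbf{u}_1=\mathbf{u}_2$ gives $b(\mathbf{v},\mathsf{p}_1-\mathsf{p}_2)=0$ for all $\mathbf{v}\in\mathbf{H}_0^1(\Omega)$, so \eqref{eq:infsup_cont} yields $\mathsf{p}_1=\mathsf{p}_2$, concluding the proof.
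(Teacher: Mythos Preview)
Your proof is correct and follows essentially the same approach as the paper: subtract the two equations, test with the difference $\mathbf{w}=\mathbf{u}_1-\mathbf{u}_2$, use the antisymmetry of $a_N$ to reduce the convective contribution to the single term $a_N(\mathbf{w};\mathbf{u}_1,\mathbf{w})$, invoke the monotonicity of $\boldsymbol{\xi}\mapsto|\boldsymbol{\xi}|^{s-2}\boldsymbol{\xi}$ to drop the Forchheimer difference with the correct sign (the paper cites \cite[Chapter I, Lemma 4.4]{MR1230384} for this), and conclude via \eqref{eq:smalldata} and the inf-sup condition. The only cosmetic difference is that you work with the reduced formulation \eqref{eq:problem_reduced} while the paper writes the difference equation \eqref{eq:uniq_error} in the full space and lets the pressure term drop out upon testing; the substance is identical.
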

\begin{proof}
Let $(\mathbf{u}_{1},\mathsf{p}_1)$ and $(\mathbf{u}_{2},\mathsf{p}_2)$ be two solutions to problem \eqref{eq:problem_variational_forms}. Define $(\mathbf{u},\mathsf{p}):=(\mathbf{u}_{1}-\mathbf{u}_{2},\mathsf{p}_1 - \mathsf{p}_2) \in \mathbf{H}_0^1(\Omega) \times L_0^2(\Omega)$. We note that $(\mathbf{u},\mathsf{p})$ satisfies
\begin{multline}
\label{eq:uniq_error}
a_{L}(\mathbf{u},\mathbf{v})
+
a_{N}(\mathbf{u}_1;\mathbf{u},\mathbf{v})
+
a_{N}(\mathbf{u};\mathbf{u}_{1},\mathbf{v})
-
a_{N}(\mathbf{u};\mathbf{u},\mathbf{v})
+
a_{F}(\mathbf{u}_1;\mathbf{u}_1,\mathbf{v})
\\
-a_{F}(\mathbf{u}_2;\mathbf{u}_2,\mathbf{v})
+
b(\mathbf{v},\mathsf{p})
= 0,
\quad
b(\mathbf{u},\mathsf{q}) = 0
\quad
\forall (\mathbf{v},\mathsf{q}) \in \mathbf{H}_0^1(\Omega) \times L_0^2(\Omega).
\end{multline}
If we set $(\mathbf{v},\mathsf{q}) = (\mathbf{u},0)$ in \eqref{eq:uniq_error} and use the identities in \eqref{eq:properties_navier} and the assumptions on $\nu$, we can conclude that
\begin{equation*}
\label{eq:BDF_uniq_bound_1}
\nu_{-}\|\nabla \mathbf{u}\|_{\mathbf{L}^2(\Omega)}^2+\| \mathbf{u}\|_{\mathbf{L}^2(\Omega)}^2+ a_{F}(\mathbf{u}_1;\mathbf{u}_1,\mathbf{u})-a_{F}(\mathbf{u}_2;\mathbf{u}_2,\mathbf{u})\leq -a_{N}(\mathbf{u};\mathbf{u}_1,\mathbf{u}).
\end{equation*}
Let us use \cite[Lemma 4.4, Chapter I]{MR1230384} to obtain $a_{F}(\mathbf{u}_1;\mathbf{u}_1,\mathbf{u})-a_{F}(\mathbf{u}_2;\mathbf{u}_2,\mathbf{u}) \geq 0$. This and the bounds \eqref{eq:estimates_navier} and \eqref{eq:estimate_grad_u} allows us to deduce that
\begin{equation*}
\label{eq:BDF_uniq_2}
\nu_{-}\|\nabla \mathbf{u}\|_{\mathbf{L}^2(\Omega)}^2 \leq \mathcal{C}_{N}\|\nabla \mathbf{u}\|_{\mathbf{L}^2(\Omega)}^2 \|\nabla \mathbf{u}_1\|_{\mathbf{L}^2(\Omega)}\leq \nu_{-}^{-1} \mathcal{C}_{N} \|\nabla \mathbf{u}\|_{\mathbf{L}^2(\Omega)}^2 \|\mathbf{f}\|_{\mathbf{H}^{-1}(\Omega)}.
\end{equation*}
From this and from \eqref{eq:smalldata}, we can deduce that $\mathbf{u} = \mathbf{0}$. Finally, we use problem \eqref{eq:uniq_error} again to establish that $b(\mathbf{v},\mathsf{p}) = 0$ for any $\mathbf{v} \in \mathbf{H}_0^1(\Omega)$. The inf-sup condition \eqref{eq:infsup_cont} allows us to conclude that $\mathsf{p} = 0$.
\end{proof}

\subsection{A nonlinear heat equation}
\label{sec:heat_equation_problem}
We now examine existence and uniqueness results for a \emph{nonlinear} heat equation with convection. For this purpose, we consider $g \in H^{-1}(\Omega)$ and a thermal diffusion coefficient $\kappa \in C^{0,1}(\mathbb{R})$ that satisfies $0<\kappa_{-}\leq \kappa(r)\leq \kappa_{+}$ for each $r \in \mathbb{R}$, where $\kappa_{+} \geq \kappa_{-}>0$. We denote by $\mathcal{L}_{\kappa}$ the Lipschitz constant of $\kappa$. A weak formulation for the \emph{nonlinear} heat equation is as follows:
\begin{equation}\label{eq:heat_problem}
T\in H_0^1(\Omega):
\quad
	\int_\Omega\left(\kappa(T)\nabla T\cdot \nabla S + (\mathbf{v}\cdot\nabla T)S\right)\mathrm{d}x= \langle g,S\rangle \quad \forall S\in H_0^{1}(\Omega),
\end{equation}
where $\mathbf{v} \in \mathbf{V}(\Omega)$. As an instrumental ingredient to perform an analysis, we introduce the map $\mathcal{A}: H^1_0(\Omega) \rightarrow H^{-1}(\Omega)$, which is defined by
\begin{align}\label{eq:mapA_temp}
\langle \mathcal{A}(T),S\rangle:=\int_{\Omega}\left(\kappa(T)\nabla T\cdot\nabla S +(\mathbf{v}\cdot\nabla T) S\right)\mathrm{d}x\quad \forall T,S \in H_0^1(\Omega).
\end{align}

The existence of solutions to problem \eqref{eq:heat_problem} is as follows.

\begin{theorem}[existence]\label{th:heat_existence}
There exists at least one solution $T \in H^1_0(\Omega)$ to \eqref{eq:heat_problem}.
\end{theorem}

\begin{proof}
Inspired by \cite[Section 2.4]{MR3014456}, we apply the theory of pseudo-monotone operators \cite[Theorem 2.6]{MR3014456}: \emph{Any pseudo-monotone and coercive map is surjective.} Let us thus prove that the bounded map $A$ defined in \eqref{eq:mapA_temp} is pseudo-monotone and coercive. To accomplish this task, we define the functions $\textswab{a}, \textswab{c}: \Omega \times \mathbb{R} \times \mathbb{R}^d \rightarrow \mathbb{R}$ as
$
\textswab{a}(x,r,\mathbf{s}):=\kappa(r)\mathbf{s}
$
and
$
\textswab{c}(x,r,\mathbf{s}) := \mathbf{v} \cdot \mathbf{s}.
$
Since $\textswab{a}(x,T,\nabla T)=\kappa(T)\nabla T$ and $\textswab{c}(x,T,\nabla T)= \mathbf{v} \cdot \nabla T$, we can rewrite \eqref{eq:mapA_temp} as follows:
\begin{align*}
\langle \mathcal{A}(T),S\rangle=\int_{\Omega}\left(\textswab{a}(x,T,\nabla T)\cdot \nabla S+\textswab{c}(x,T,\nabla T)S\right)\mathrm{d}x.
\end{align*}

We now proceed on the basis of three steps.
	
	\emph{Step 1.} \emph{$\mathcal{A}$ is coercive.} Let $S \in H^1_0(\Omega)$. Since $\mathbf{v}\in\mathbf{V}(\Omega)$, we immediately arrive at the coercivity of $\mathcal{A}$  because $\langle\mathcal{A}(S),S\rangle \geq \kappa_- \|\nabla S\|^2_{\mathbf{L}^2(\Omega)}$.

	\emph{Step 2.} \emph{$\mathcal{A}$ is pseudo-monotone}. Since $\mathcal{A}$ is bounded, to prove that $\mathcal{A}$ is pseudo-monotone, we prove that $\mathcal{A}$ satisfies the conditions in \cite[Lemma 2.32]{MR3014456}. First, we note that the functions $\textswab{a}$ and $\textswab{c}$ are Carath\'eodory functions. Second, $\mathcal{A}$ is \emph{monotone in the main part}, i.e., the function $\textswab{a}$ satisfies the following condition:
\begin{align*}
	(\textswab{a}(x,r,\mathbf{s})-\textswab{a}(x,r,\tilde{\mathbf{s}}))\cdot (\mathbf{s}-\tilde{\mathbf{s}})= \kappa(r)|\mathbf{s}-\tilde{\mathbf{s}}|^2\geq 0
	\quad
	\mathrm{a.e.}~x\in \Omega,
	\,
	\forall r\in\mathbb{R},
	\,
	\forall \mathbf{s}, \tilde{\mathbf{s}} \in \mathbb{R}^d.
	\end{align*}
	Thirdly, $\textswab{c}$ is linearly dependent on $\mathbf{s}$. Finally, since $\textswab{a}(x,r,\mathbf{s})=\kappa(r)\mathbf{s}$ and $\kappa$ is uniformly bounded, it follows immediately that $|\textswab{a}(x,r,\mathbf{s})| \leq \kappa_{+} |\mathbf{s}|$ and that $\textswab{a}$ thus satisfies  \cite[(2.55a)]{MR3014456}. On the other hand, as a consequence of suitable Young's inequalities, it can be proved that $\textswab{c}$ satisfies \cite[(2.55c)]{MR3014456}. An application of \cite[Lemma 2.32]{MR3014456} thus shows that $\mathcal{A}$ is pseudo-monotone.

	\emph{Step 3.} We apply \cite[Theorem 2.6]{MR3014456} to conclude that $\mathcal{A}$ is surjective, in other words, for $g\in H^{-1}(\Omega)$ there is a solution to problem \eqref{eq:heat_problem}. This concludes the proof.
\end{proof} 

Global uniqueness can be obtained when the solution is slightly smoother and the datum is suitably restricted.

\begin{theorem}[uniqueness for small data]\label{th:heat_uniqueness}
Let us assume that \eqref{eq:heat_problem} has a solution $T_1 \in W^{1,3}(\Omega)\cap H_0^1(\Omega)$ such that
\begin{equation}\label{eq:heat_estimates_uniq}
\kappa_{-}^{-1}\mathcal{L}_{\kappa}\mathcal{C}_{6 \hookrightarrow 2}\|\nabla T_1\|_{\mathbf{L}^{3}(\Omega)} <1.
\end{equation}
Then problem \eqref{eq:heat_problem} has no other solution $T_2$ in $H_{0}^1(\Omega)$. Here, $\mathcal{C}_{6 \hookrightarrow 2}$ is the best constant in the Sobolev embedding $H^1_0(\Omega) \hookrightarrow L^6(\Omega)$ and $\mathcal{L}_\kappa$ is the Lipschitz constant of $\kappa$.
\end{theorem}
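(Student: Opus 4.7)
The plan is to assume a second solution $T_2 \in H_0^1(\Omega)$ exists, set $W := T_1 - T_2 \in H_0^1(\Omega)$, subtract the two weak formulations, and test the resulting equation with $W$ itself. The convective term $\int_\Omega (\mathbf{v} \cdot \nabla W) W \, \mathrm{d}x$ vanishes because $\mathbf{v} \in \mathbf{V}(\Omega)$: this is the same algebraic identity $a_N(\mathbf{v}; W, W) = 0$ from \eqref{eq:properties_navier}, obtained after integrating by parts and using $\mathrm{div}\,\mathbf{v} = 0$ together with the zero trace of $\mathbf{v}$.

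For the diffusive contribution I would perform the standard splitting
\begin{equation*}
\kappa(T_1)\nabla T_1 - \kappa(T_2)\nabla T_2
= \kappa(T_2)\nabla W + \bigl(\kappa(T_1) - \kappa(T_2)\bigr)\nabla T_1,
\end{equation*}
so that testing with $W$ and rearranging yields
\begin{equation*}
\int_\Omega \kappa(T_2)\,|\nabla W|^2\,\mathrm{d}x
= -\int_\Omega \bigl(\kappa(T_1) - \kappa(T_2)\bigr)\,\nabla T_1 \cdot \nabla W\,\mathrm{d}x.
\end{equation*}
The left-hand side is bounded below by $\kappa_{-}\|\nabla W\|_{\mathbf{L}^2(\Omega)}^2$ thanks to the lower bound on $\kappa$. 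For the right-hand side I would use the Lipschitz property $|\kappa(T_1)-\kappa(T_2)| \leq \mathcal{L}_\kappa |W|$ pointwise and then H\"older's inequality with exponents $(6,3,2)$, giving
\begin{equation*}
\left|\int_\Omega \bigl(\kappa(T_1) - \kappa(T_2)\bigr)\,\nabla T_1 \cdot \nabla W\,\mathrm{d}x\right|
\leq \mathcal{L}_\kappa\,\|W\|_{L^6(\Omega)}\,\|\nabla T_1\|_{\mathbf{L}^3(\Omega)}\,\|\nabla W\|_{\mathbf{L}^2(\Omega)}.
\end{equation*}
This is exactly the step where the extra regularity $T_1 \in W^{1,3}(\Omega)$ is needed; without it the middle factor would not make sense. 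Applying the Sobolev embedding $H^1_0(\Omega) \hookrightarrow L^6(\Omega)$ with constant $\mathcal{C}_{6 \hookrightarrow 2}$ to $\|W\|_{L^6(\Omega)}$ produces $\|\nabla W\|_{\mathbf{L}^2(\Omega)}^2$ on both sides.

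Combining the two bounds gives
\begin{equation*}
\bigl(\kappa_{-} - \mathcal{L}_\kappa\,\mathcal{C}_{6 \hookrightarrow 2}\,\|\nabla T_1\|_{\mathbf{L}^3(\Omega)}\bigr)\,\|\nabla W\|_{\mathbf{L}^2(\Omega)}^2 \leq 0,
\end{equation*}
and the smallness hypothesis \eqref{eq:heat_estimates_uniq} ensures the bracketed factor is strictly positive, forcing $\nabla W \equiv 0$ and thus $T_1 = T_2$ by Poincar\'e's inequality. The only mildly delicate point is the H\"older triple: one must choose the exponents so that the $L^3$ norm of $\nabla T_1$ (the only norm we control via the extra regularity) is paired with the $L^6$ norm of $W$ (embedded from $H^1_0$) and the $L^2$ norm of $\nabla W$; any other pairing either fails to close the estimate or requires stronger assumptions on $T_1$.
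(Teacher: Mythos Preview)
Your argument is correct and follows essentially the same route as the paper: subtract the two weak formulations, test with the difference $W=T_1-T_2$, use $\mathbf{v}\in\mathbf{V}(\Omega)$ to eliminate the convective term, split the diffusive part as $\kappa(T_2)\nabla W+(\kappa(T_1)-\kappa(T_2))\nabla T_1$, and estimate the resulting right-hand side with Lipschitz continuity of $\kappa$, H\"older's inequality with exponents $(6,3,2)$, and the embedding $H_0^1(\Omega)\hookrightarrow L^6(\Omega)$. The paper's proof is identical up to notation (it calls the difference $T$ rather than $W$).
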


\begin{proof}
We proceed similarly to the proof of Theorem \ref{th:BDF_uniq}. Let $T_2$ be another solution of \eqref{eq:heat_problem}. A simple calculation shows that $T:=T_1-T_2\in H_0^1(\Omega)$ verifies the identity
	\begin{equation}\label{eq:heat_error}
		\int_\Omega\left( \kappa(T_2)\nabla T\cdot \nabla S +  (\mathbf{v}\cdot\nabla T)S\right)\mathrm{d}x =\int_\Omega (\kappa(T_2)- \kappa(T_1))\nabla T_1\cdot\nabla S\mathrm{d}x
	\end{equation}
for every $S\in H_0^1(\Omega)$. We now estimate the term $\|\nabla T\|_{{\mathbf{L}}^2(\Omega)}$. To this end, we set $S=T$ in \eqref{eq:heat_error}, use the fact that $\mathbf{v}\in \mathbf{V}(\Omega)$, apply the Lipschitz condition of $\kappa$, and apply the standard Sobolev embedding $H^1_0(\Omega) \hookrightarrow L^6(\Omega)$ to conclude that
$
\kappa_{-}\|\nabla T\|_{{\mathbf{L}}^2(\Omega)}\leq \mathcal{L}_{\kappa}\mathcal{C}_{6 \hookrightarrow 2}\|\nabla T\|_{{\mathbf{L}}^2(\Omega)}\| \nabla T_1\|_{{\mathbf{L}}^3(\Omega)}.
$
By substituting the bound \eqref{eq:heat_estimates_uniq}, we then conclude $\|\nabla T\|_{{\mathbf{L}}^2(\Omega)}\leq 0$. This proves uniqueness when \eqref{eq:heat_estimates_uniq} holds.
\end{proof}

\begin{remark}[$d=2$]
If $d=2$, the assumption on $T_1$ in \eqref{eq:heat_estimates_uniq} can be improved to
$\mathcal{L}_{\kappa}\mathcal{C}_{\sigma \hookrightarrow 2}\|\nabla T_1\|_{\mathbf{L}^{t}(\Omega)}/\kappa_{-} <1$ for some $t>2$, where $\sigma$ satisfies $1/\sigma + 1 /t = 1/2$. This is achieved by exploiting the fact that $H^1_0(\Omega) \hookrightarrow L^{\iota}(\Omega)$ for $\iota < \infty$ in two dimensions.
\end{remark}


\section{The coupled problem}
\label{sec:coupled_problem}
The main goal of this section is to show the existence of suitable weak solutions for the coupled problem \eqref{eq:model} and to derive a uniqueness result under suitable assumptions. In a first step, we introduce the assumptions under which we will work and introduce the concept of a weak solution.

\subsection{Main assumptions}\label{sec:main_assump}

Inspired by the results in the previous sections, we consider the following assumptions on the viscosity and diffusion coefficients.

$\bullet$ \emph{Viscosity:} The viscosity $\nu\in C^{0,1}(\mathbb{R})$ is a function that is strictly positive and bounded: there exist positive constants $\nu_{-}$ and $\nu_{+}$ such that
\begin{equation}\label{eq:nu}
\nu_{-}\leq \nu(r)\leq \nu_{+} \quad \forall r\in\mathbb{R}.
\end{equation}

$\bullet$ \emph{Diffusivity:} The thermal coefficient $\kappa\in C^{0,1}(\mathbb{R})$ is a strictly positive and bounded function: there exists positive constant $\kappa_{-}$ and $\kappa_{+}$ such that
\begin{equation}\label{eq:kappa}
\kappa_{-}\leq \kappa(r)\leq \kappa_{+} \quad \forall r\in\mathbb{R}.
\end{equation}

$\bullet$ \emph{Lipschitz constants:} We denote by $\mathcal{L}_{\nu}$ and $\mathcal{L}_{\kappa}$ the Lipschitz constants associated to the functions $\nu$ and $\kappa$, respectively.

\subsection{Weak solution}
We use the following notion of weak solution for \eqref{eq:model}.

\begin{definition}[weak solution]
Let $\mathbf{f} \in \mathbf{H}^{-1}(\Omega)$ and let $g \in H^{-1}(\Omega)$. We say that $(\mathbf{u},\mathsf{p},T)\in \mathbf{H}_0^1(\Omega)\times L_0^2(\Omega)\times H_0^1(\Omega)$ is a weak solution to \eqref{eq:model} if
\begin{equation}\label{eq:modelweak}
\begin{array}{c}
\displaystyle
\int_\Omega\left(\nu(T)\nabla\mathbf{u}\cdot\nabla\mathbf{v}+(\mathbf{u}\cdot\nabla)\mathbf{u}\cdot\mathbf{v}+ \mathbf{u}\cdot\mathbf{v} +|\mathbf{u}|^{s-2}\mathbf{u}\cdot\mathbf{v}-\mathsf{p}\,\mathrm{div}\,\mathbf{v} \right)\mathrm{d}x
=
\displaystyle \langle\mathbf{f},\mathbf{v}\rangle,
\\
\displaystyle
\int_{\Omega}\mathsf{q}\,\mathrm{div}\,\mathbf{u}\,\mathrm{d}x=0,
\qquad
\displaystyle\int_\Omega\left(\kappa(T)\nabla T\cdot \nabla S+(\mathbf{u}\cdot\nabla T)S\right)\mathrm{d}x
=
\displaystyle\langle g,S\rangle,
\end{array}
\end{equation}
for all $(\mathbf{v},\mathsf{q},S)\in \mathbf{H}_0^1(\Omega)\times L_0^2(\Omega)\times H_0^1(\Omega)$. The parameter $s$ belongs to $[3,4]$.
\end{definition}
It is important to note that, given the assumptions imposed on the problem data, all terms in system \eqref{eq:modelweak} are well-defined.

\subsection{Existence of solutions}
We are now in a position to establish an existence result for problem \eqref{eq:modelweak}. To simplify our presentation, we use the following notation:
\begin{align*}
& \forall
(\mathbf{v},S) \in \mathbf{H}_0^1(\Omega) \times H_0^1(\Omega):
\quad
\| (\mathbf{v},S) \|_{ \mathbf{H}_0^1(\Omega) \times H_0^1(\Omega) }
:=
\left[
\|\nabla \mathbf{v}\|_{\mathbf{L}^2(\Omega)}^2+\|\nabla S\|_{\mathbf{L}^2(\Omega)}^2
\right]^{\frac{1}{2}},
\\
& \forall
(\mathbf{f},g) \in \mathbf{H}^{-1}(\Omega) \times H^{-1}(\Omega):
\quad
\| (\mathbf{f},g) \|_{\mathbf{H}^{-1}(\Omega) \times H^{-1}(\Omega)}
:=\left[
\|\mathbf{f}\|_{\mathbf{H}^{-1}(\Omega)}^2+\!\|g\|_{H^{-1}(\Omega)}^2\right]^{\frac{1}{2}}.
\end{align*}

\begin{theorem}[existence of solutions]\label{theorem_coupled_existence}
Let $d \in \{2,3\}$, and let $\Omega \subset \mathbb{R}^d$ be an open and bounded domain with Lipschitz boundary $\partial\Omega$. Let $\nu$ and $\kappa$ in $C^{0,1}(\mathbb{R})$ be such that inequalities \eqref{eq:nu} and \eqref{eq:kappa} hold. If $\mathbf{f} \in \mathbf{H}^{-1}(\Omega)$ and $g \in H^{-1}(\Omega)$, then problem \eqref{eq:modelweak} has at least one solution $(\mathbf{u},\mathsf{p},T) \in \mathbf{H}^1_0(\Omega) \times L_0^2(\Omega) \times H^1_0(\Omega)$. Moreover, we have
\begin{align}
	\label{eq:estimate_grad_u_coupled}
	\|\nabla \mathbf{u}\|_{\mathbf{L}^2(\Omega)}
	& \leq \nu_{-}^{-1} \|\mathbf{f}\|_{\mathbf{H}^{-1}(\Omega)},
	\quad
	\| \mathsf{p} \|_{L^2(\Omega)}
	\leq {\beta}^{-1} \Lambda(\mathbf{f})\|\mathbf{f}\|_{\mathbf{H}^{-1}(\Omega)},
	\\
	\label{eq:estimate_T_coupled}
	\|\nabla T \|_{\mathbf{L}^2(\Omega)}
	& \leq \kappa_{-}^{-1} \| g \|_{H^{-1}(\Omega)},
\end{align}
where $\Lambda$ is defined in \eqref{eq:Lambda_f}.
\label{th:existence_coupled_problem}
\end{theorem}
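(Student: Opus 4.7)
The plan is to adapt the Galerkin framework used in Theorem \ref{th:BDF_exist} to the full coupled system \eqref{eq:modelweak}, following the standard approach for non-isothermal flows (see, e.g., \cite[Chapter IV]{MR851383}). I would choose Schauder bases $\{\mathbf{w}_i\}_{i\geq 1}\subset\mathbf{V}(\Omega)$ and $\{\varphi_j\}_{j\geq 1}\subset H_0^1(\Omega)$, set $V_m:=\mathrm{span}\{\mathbf{w}_1,\ldots,\mathbf{w}_m\}$ and $W_m:=\mathrm{span}\{\varphi_1,\ldots,\varphi_m\}$, and formulate the discrete reduced problem: find $(\mathbf{u}_m,T_m)\in V_m\times W_m$ satisfying the momentum and temperature identities of \eqref{eq:modelweak} tested against $(\mathbf{v},S)\in V_m\times W_m$, with the pressure eliminated since $V_m\subset\mathbf{V}(\Omega)$. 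Because $V_m\times W_m$ is finite-dimensional and the associated nonlinear map is continuous and outward pointing on a sufficiently large ball, Brouwer's fixed point theorem delivers a discrete solution $(\mathbf{u}_m,T_m)$ for each $m$.

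To obtain a priori bounds, I would test the discrete momentum equation with $\mathbf{v}=\mathbf{u}_m$ and use \eqref{eq:properties_navier}, $\nu(T_m)\geq\nu_-$, and the non-negativity of the Forchheimer term to obtain $\|\nabla\mathbf{u}_m\|_{\mathbf{L}^2(\Omega)}\leq\nu_-^{-1}\|\mathbf{f}\|_{\mathbf{H}^{-1}(\Omega)}$; testing the discrete heat equation with $S=T_m$ and exploiting $\mathbf{u}_m\in\mathbf{V}(\Omega)$ to annihilate the convective contribution, together with $\kappa(T_m)\geq\kappa_-$, yields $\|\nabla T_m\|_{\mathbf{L}^2(\Omega)}\leq\kappa_-^{-1}\|g\|_{H^{-1}(\Omega)}$. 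By reflexivity and Rellich--Kondrachov, extract a subsequence (not relabeled) along which $\mathbf{u}_m\rightharpoonup\mathbf{u}$ in $\mathbf{H}_0^1(\Omega)$, $T_m\rightharpoonup T$ in $H_0^1(\Omega)$, both sequences converge strongly in $\mathbf{L}^q(\Omega)$ and $L^q(\Omega)$ respectively for $q<\infty$ if $d=2$ or $q<6$ if $d=3$, and $T_m\to T$ almost everywhere in $\Omega$ after a further subsequence.

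Passage to the limit in the Navier and Forchheimer terms is handled verbatim by the arguments of Step 2 in the proof of Theorem \ref{th:BDF_exist}; the thermal convection $\int(\mathbf{u}_m\cdot\nabla T_m)S\,\mathrm{d}x$ is rewritten as $-\int T_m(\mathbf{u}_m\cdot\nabla S)\,\mathrm{d}x$ using $\mathrm{div}\,\mathbf{u}_m=0$ and $S|_{\partial\Omega}=0$, then closed by the strong convergence of $T_m$ and $\mathbf{u}_m$ in $L^4(\Omega)$. The main obstacle is passing to the limit in the temperature-dependent coefficient terms $\nu(T_m)\nabla\mathbf{u}_m$ and $\kappa(T_m)\nabla T_m$: here the almost everywhere convergence $T_m\to T$ combined with the continuity and uniform boundedness of $\nu$ and $\kappa$ yields, via Lebesgue's dominated convergence, $\nu(T_m)\nabla\mathbf{v}\to\nu(T)\nabla\mathbf{v}$ strongly in $\mathbf{L}^2(\Omega)$ for every fixed $\mathbf{v}\in\mathbf{H}_0^1(\Omega)$, and analogously for $\kappa$; pairing this strong convergence with the weak convergence of $\nabla\mathbf{u}_m$ (respectively $\nabla T_m$) in $\mathbf{L}^2(\Omega)$ delivers the required limits. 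A density argument then extends the identities to all test functions in $\mathbf{V}(\Omega)\times H_0^1(\Omega)$, and the inf-sup condition \eqref{eq:infsup_cont} recovers a unique $\mathsf{p}\in L_0^2(\Omega)$; the bounds \eqref{eq:estimate_grad_u_coupled}--\eqref{eq:estimate_T_coupled} are inherited from the uniform a priori estimates by weak lower semicontinuity, together with the argument of Proposition \ref{pro:stability_bounds}.
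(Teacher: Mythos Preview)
Your proposal is correct and follows essentially the same Galerkin--Brouwer--compactness route as the paper: the paper likewise builds a map $\Phi$ on $\mathbf{V}(\Omega)\times H_0^1(\Omega)$, invokes \cite[Chapter~IV, Corollary~1.1]{MR851383} on finite-dimensional subspaces, and passes to the limit using the identical mechanism (strong convergence of $\nu(T_k)\nabla\mathbf{v}$ and $\kappa(T_k)\nabla S$ in $\mathbf{L}^2$ via dominated convergence, paired with weak convergence of the gradients), recovering the pressure by de~Rham and the stability bounds as in Proposition~\ref{pro:stability_bounds}. Your treatment of the thermal convective term via integration by parts is in fact slightly more explicit than the paper's, which omits this term from its list (i)--(iv).
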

\begin{proof}
We adapt the proof of Theorem 2.2 in \cite{MR3342221} to our case and divide the proof into several steps.
	
\emph{Step 1.} \emph{A mapping $\Phi$:} Let $\mathbf{u}, \mathbf{v} \in \mathbf{V}(\Omega)$, and let $T,S \in H_0^1(\Omega)$ be arbitrary. To simplify the notation, we define the variables $\boldsymbol{\mathcal{U}}:=(\mathbf{u},T)$ and $\boldsymbol{\mathcal{V}}:=(\mathbf{v},S)$. We introduce the mapping $\Phi$ from the space $\mathbf{V}(\Omega) \times H^1_0(\Omega)$ into its dual space as
\begin{multline}
\label{eq:definition_Phi}
\langle \Phi(\boldsymbol{\mathcal{U}}),\boldsymbol{\mathcal{V}} \rangle
:=
\int_{\Omega}\left( \nu(T)\nabla \mathbf{u} \cdot \nabla \mathbf{v}
+
(\mathbf{u}\cdot\nabla)\mathbf{u}\cdot\mathbf{v}
+
\mathbf{u}\cdot\mathbf{v}
+
|\mathbf{u}|^{s-2}\mathbf{u}\cdot\mathbf{v} \right)\mathrm{d}x
\\
+
\int_\Omega\left(\kappa(T)\nabla T\cdot \nabla S + (\mathbf{u}\cdot\nabla T)S\right)\mathrm{d}x  - \langle\mathbf{f},\mathbf{v}\rangle - \langle g, S\rangle.
\end{multline}
As a consequence of $\mathbf{H}_0^1(\Omega) \hookrightarrow \mathbf{L}^{\iota}(\Omega)$, which holds for $\iota < \infty$ when $d=2$ and $\iota \leq 6$ when $d=3$, suitable H\"older inequalities, the bounds \eqref{eq:estimate_for_Forchheimer_2} and \eqref{eq:estimates_navier}, and the properties that $\nu$ and $\kappa$ satisfy, we can deduce that $\Phi$ is continuous on $\mathbf{V}(\Omega) \times H^1_0(\Omega)$.

If, on the other hand, we replace $\boldsymbol{\mathcal{U}}$ by $\boldsymbol{\mathcal{V}}$ in \eqref{eq:definition_Phi} and use that $a_N(\mathbf{v};\mathbf{v},\mathbf{v}) = 0$, $a_F(\mathbf{v};\mathbf{v},\mathbf{v}) = ( |\mathbf{v}|^{s-2} , |\mathbf{v}|^{2})_{L^2(\Omega)} \geq 0$, and $\int_{\Omega} (\mathbf{v} \cdot \nabla S)S\mathrm{d}x = 0$, together with the properties \eqref{eq:nu} and \eqref{eq:kappa}, we conclude that
\begin{multline*}
\langle
\Phi(\boldsymbol{\mathcal{V}}),\boldsymbol{\mathcal{V}}
\rangle
\geq
\min \left\{\nu_-, \kappa_-\right\}
\| (\mathbf{v},S) \|^2_{\mathbf{H}_0^1(\Omega) \times H_0^1(\Omega)}
\\
-
\| (\mathbf{f},g) \|_{ \mathbf{H}^{-1}(\Omega) \times H^{-1}(\Omega) }
\| (\mathbf{v},S) \|_{ \mathbf{H}_0^1(\Omega) \times H_0^1(\Omega) }.
	\end{multline*}
From this, we can conclude that $\langle \Phi(\boldsymbol{\mathcal{V}}),\boldsymbol{\mathcal{V}} \rangle \geq 0$ for all $\boldsymbol{\mathcal{V}} \in \mathbf{V}(\Omega) \times H^1_0(\Omega)$ such that
$
\| (\mathbf{v},S) \|_{ \mathbf{H}_0^1(\Omega) \times H_0^1(\Omega) }
=
\min
\{\nu_-, \kappa_- \}^{-1}
\| (\mathbf{f},g) \|_{ \mathbf{H}^{-1}(\Omega) \times H^{-1}(\Omega) }
=: \delta.
$

\emph{Step 2.} \emph{Galerkin approximation} \cite[Theorem 2.6]{MR3014456}: Since $\mathbf{V}(\Omega)$ is separable, we can take a sequence $\{ \mathbb{V}_k \}_{k \in \mathbb{N}}$ of finite-dimensional subspaces of $\mathbf{V}(\Omega)$ such that
\begin{equation}
 \forall k \in \mathbb{N}:
 \quad
 \mathbb{V}_{k} \subset \mathbb{V}_{k+1} \subset \mathbf{V}(\Omega),
 \quad
 \cup \{ \mathbb{V}_k: k \in \mathbb{N} \} \textrm{ is dense in } \mathbf{V}(\Omega).
 \label{eq:density}
\end{equation}
We can also take a sequence $\{ \mathbb{W}_k \}_{k \in \mathbb{N}}$ of finite-dimensional subspaces of $H_0^1(\Omega)$ so that the properties in \eqref{eq:density} hold by replacing $\mathbb{V}_k$ and $\mathbf{V}(\Omega)$ by $\mathbb{W}_k$ and $H_0^1(\Omega)$, respectively. Then, we define a Galerkin approximation $\boldsymbol{\mathcal{U}}_k = (\mathbf{u}_k,T_k) \in \mathbb{V}_k \times \mathbb{W}_k$ by the identity
\[
 \langle \Phi(\boldsymbol{\mathcal{U}}_k),\boldsymbol{\mathcal{V}}_k \rangle = 0
 \quad
 \forall \boldsymbol{\mathcal{V}}_k \in \mathbb{V}_k \times \mathbb{W}_k.
\]
As shown in step 1 on a continuous level, $\Phi$ is a continuous from $\mathbb{V}_k \times \mathbb{W}_k$ into $\mathbb{V}_k \times \mathbb{W}_k$ and satisfies the property $\langle \Phi(\boldsymbol{\mathcal{V}}_k),\boldsymbol{\mathcal{V}}_k \rangle \geq 0$ for all $\boldsymbol{\mathcal{V}}_k \in \mathbb{V}_k \times \mathbb{W}_k$ such that
$
\| (\mathbf{v}_k,S_k) \|_{ \mathbf{H}_0^1(\Omega) \times H_0^1(\Omega) }
=
\min \{\nu_-, \kappa_- \}^{-1}
\| (\mathbf{f},g) \|_{ \mathbf{H}^{-1}(\Omega) \times H^{-1}(\Omega) }
= \delta.
$
Applying a consequence of Brouwer's classical fixed-point theorem \cite[Chapter IV, Corollary 1.1]{MR851383}, we deduce the existence of a solution $\boldsymbol{\mathcal{U}}_k$ such that
\begin{equation}
 \label{eq:Galerkin_approximation}
 \langle \Phi(\boldsymbol{\mathcal{U}}_k),\boldsymbol{\mathcal{V}}_k \rangle = 0
 \quad
 \forall \boldsymbol{\mathcal{V}}_k \in \mathbb{V}_k \times \mathbb{W}_k,
 \qquad
 \| (\mathbf{u}_k,T_k) \|_{ \mathbf{H}_0^1(\Omega) \times H_0^1(\Omega) } \leq \delta.
\end{equation}

\emph{Step 3.} \emph{Limite passage:} Since the sequences $\{ \mathbf{u}_k \}_{k \in \mathbb{N}}$ and $\{ T_k \}_{k \in \mathbb{N}}$ are uniformly bounded in $\mathbf{H}_0^1(\Omega)$ and $H_0^1(\Omega)$, respectively, we deduce the existence of nonrelabeled subsequences such that $\mathbf{u}_k \rightharpoonup \mathbf{u}$ and $T_k \rightharpoonup T$ in $\mathbf{H}_0^1(\Omega)$ and $H_0^1(\Omega)$, respectively, as $k \uparrow \infty$. The compact embedding of \cite[Theorem 6.3, Part I]{MR2424078} guarantees that $\mathbf{u}_{k} \rightarrow \mathbf{u}$ in $\mathbf{L}^q(\Omega)$ and $T_{k} \rightarrow T$ in $L^q(\Omega)$ as $k \uparrow \infty$ for $q<\infty$ in two dimensions and $q<6$ in three dimensions. Note that by construction, for every $\ell \leq k$ we have that
\begin{equation}
\label{eq:Galerkin_approximation_ell}
\boldsymbol{\mathcal{U}}_k = (\mathbf{u}_k,T_k) \in \mathbb{V}_{k} \times \mathbb{W}_{k}:
\quad
\langle \Phi(\boldsymbol{\mathcal{U}}_k),\boldsymbol{\mathcal{V}}_{\ell} \rangle = 0
\quad
\forall \boldsymbol{\mathcal{V}}_{\ell} \in \mathbb{V}_{\ell} \times \mathbb{W}_{\ell}.
\end{equation}

Let us now prove that the limit point $(\mathbf{u},T)$ solves \eqref{eq:modelweak}. To do so, we note that:

(i) \emph{$\int_{\Omega} \nu(T_k)\nabla \mathbf{u}_k \cdot \nabla \mathbf{v} \mathrm{d}x \rightarrow \int_{\Omega} \nu(T) \nabla \mathbf{u}\cdot \nabla \mathbf{v} \mathrm{d}x$ as $k \uparrow \infty$ for $\mathbf{v} \in \mathbf{V}(\Omega)$:} It follows from the strong convergence of $\{ T_k \}_{k \in \mathbb{N}}$ to $T$ in $L^{2}(\Omega)$ and the Lipschitz continuity of $\nu$ that $\nu(T_k) \nabla \mathbf{v} \rightarrow \nu(T) \nabla \mathbf{v}$ almost everywhere in $\Omega$ as $k \uparrow \infty$. Since $\nu$ is bounded, the Lebesgue dominated convergence theorem shows that $\nu(T_k) \nabla \mathbf{v} \rightarrow \nu(T)\nabla \mathbf{v}$ in $\mathbf{L}^2(\Omega)$ as $k \uparrow \infty$. As a result, $\int_{\Omega} \nu(T_k)\nabla \mathbf{u}_k \cdot \nabla \mathbf{v} \mathrm{d}x \rightarrow \int_{\Omega} \nu(T) \nabla \mathbf{u} \cdot \nabla \mathbf{v} \mathrm{d}x$ as $k \uparrow \infty$.

(ii) \emph{$a_N(\mathbf{u}_k;\mathbf{u}_k,\mathbf{v}) \rightarrow a_N(\mathbf{u};\mathbf{u},\mathbf{v})$ as $k \uparrow \infty$ for $\mathbf{v} \in \mathbf{V}(\Omega)$}: The proof of this convergence result can be found in the proof of \cite[Chapter IV, Theorem 2.1]{MR851383}.

(iii) \emph{$a_F(\mathbf{u}_k;\mathbf{u}_k,\mathbf{v}) \rightarrow a_F(\mathbf{u};\mathbf{u},\mathbf{v})$ as $k \uparrow \infty$ for $\mathbf{v} \in \mathbf{V}(\Omega)$}: We begin with an application of \cite[estimate (5.3.33)]{CiarletBook} or \cite[Lemma 5.3]{MR0388811} and obtain
\begin{multline}\label{eq:estimates_a_weakly_cont_new}
\left|
\int_\Omega (|\mathbf{u}_k|^{s-2}\mathbf{u}_k
- |\mathbf{u}|^{s-2}\mathbf{u})\cdot\mathbf{v} \mathrm{d}x
\right|
\lesssim
\displaystyle
\int_\Omega |\mathbf{u}_k-\mathbf{u}|(|\mathbf{u_k}|
+
|\mathbf{u}|)^{s-2}|\mathbf{v}|
\mathrm{d}x
\\
\displaystyle
\leq \|\mathbf{u_k}
-
\mathbf{u}\|_{\mathbf{L}^{\mu}(\Omega)}\||\mathbf{u_k}|
+
|\mathbf{u}|\|^{s-2}_{\mathbf{L}^{\tau(s-2)}(\Omega)}\|\mathbf{v}\|_{\mathbf{L}^{\mu}(\Omega)}
\displaystyle
\leq \Lambda_{s,\tau}\|\mathbf{u_k}
-
\mathbf{u}\|_{\mathbf{L}^{\mu}(\Omega)}\|\mathbf{v}\|_{\mathbf{L}^{\mu}(\Omega)},
\end{multline}
where $2 \mu^{-1} + \tau^{-1} = 1$ and $\Lambda_{s,\tau} = \Lambda_{s,\tau}(\mathbf{u})$ is such that
\begin{equation}\label{eq:Lambda_st_new}
	\||\mathbf{u_k}| +  |\mathbf{u}|\|^{s-2}_{\mathbf{L}^{\tau(s-2)}(\Omega)}
	\leq
	\mathcal{C}^{s-2}_{\tau(s-2) \hookrightarrow 2}
	\left[ \mathcal{M} +
	\| \nabla \mathbf{u} \|_{\mathbf{L}^{2}(\Omega)}
	\right]^{s-2}
	=:\Lambda_{s,\tau}(\mathbf{u}).
\end{equation}
Here, $\mathcal{C}_{\tau(s-2) \hookrightarrow 2}$ is the best constant in the Sobolev embedding $\mathbf{H}_0^1(\Omega) \hookrightarrow\mathbf{L}^{\tau(s-2)}(\Omega)$ and $\mathcal{M}$ is such that $ \| \nabla \mathbf{u}_k \|_{\mathbf{L}^{2}(\Omega)} \leq \mathcal{M}$ for every $k \in \mathbb{N}$. The constant $\tau$ is such that $\tau = q$ for some $q > 1$ in two dimensions, and $\tau > 3/2$ is arbitrarily close to $3/2$ in three dimensions. We again invoke the compact embedding of \cite[Theorem 6.3, Part I]{MR2424078} which guarantees that $\mathbf{u}_{k} \rightarrow \mathbf{u}$ in $\mathbf{L}^q(\Omega)$ as $k \uparrow \infty$ for $q<\infty$ in two dimensions and $q<6$ in three dimensions to conclude that $\mathbf{u}_{k}\to\mathbf{u}$ in $\mathbf{L}^\mu(\Omega)$ as $k\uparrow \infty$. As a result, we have obtained that $a_{F}(\mathbf{u}_k;\mathbf{u}_k,\mathbf{v})\to a_{F}(\mathbf{u};\mathbf{u},\mathbf{v})$ as $k\uparrow \infty$.

(iv) \emph{$\int_{\Omega} \kappa(T_k) \nabla T_k \cdot \nabla S \mathrm{d}x \rightarrow \int_{\Omega} \kappa(T) \nabla T \cdot \nabla S \mathrm{d}x$ as $k \uparrow \infty$ for $S \in H_0^1(\Omega)$:} This convergence result follows from arguments similar to those used in item (i).

The results obtained in (i)--(iv) allow us to conclude that
$
\langle \Phi(\boldsymbol{\mathcal{U}}),\boldsymbol{\mathcal{V}}_{\ell} \rangle = 0
$
for all $\boldsymbol{\mathcal{V}}_{\ell} \in \mathbb{V}_{\ell} \times \mathbb{W}_{\ell}$
and by density that
$
\langle \Phi(\boldsymbol{\mathcal{U}}),\boldsymbol{\mathcal{V}} \rangle = 0
$
for all $\boldsymbol{\mathcal{V}} \in \mathbf{V}(\Omega) \times H_0^1(\Omega)$. We can therefore conclude that $(\mathbf{u},T)$ satisfies the second and third equations in \eqref{eq:modelweak} and
\begin{align*}
 \displaystyle
 \langle \mathcal{H}, \mathbf{v} \rangle :=
\int_\Omega\left(\nu(T)\nabla\mathbf{u}\cdot\nabla\mathbf{v}+(\mathbf{u}\cdot\nabla)\mathbf{u}\cdot\mathbf{v}+ \mathbf{u}\cdot\mathbf{v} +|\mathbf{u}|^{s-2}\mathbf{u}\cdot\mathbf{v}
\right)\mathrm{d}x
-
\displaystyle \langle\mathbf{f},\mathbf{v}\rangle
= 0,	
\end{align*}
for all $\mathbf{v} \in \mathbf{V}(\Omega)$.

\emph{Step 4.} \emph{The pressure:} The functional $\mathcal{H}$ is linear and continuous on $\mathbf{H}_0^1(\Omega)$ and is zero on the space $\mathbf{V}(\Omega)$. Consequently, by virtue of de Rham's theorem \cite[Theorem B.73]{Guermond-Ern}, there exists $\mathsf{p} \in L_0^2(\Omega)$ such that $\langle \mathcal{H}, \mathbf{v} \rangle = \langle \nabla \mathsf{p}, \mathbf{v}\rangle$ for $\mathbf{v} \in \mathbf{H}_0^1(\Omega)$. We have thus proved the existence of a solution $(\mathbf{u},\mathsf{p},T)$.

\emph{Step 5.} \emph{Stability bounds:} Let $(\mathbf{u},\mathsf{p},T)\in \mathbf{H}_0^1(\Omega)\times L_0^2(\Omega) \times H_0^1(\Omega)$ be a solution of problem \eqref{eq:modelweak}. If we use $\mathbf{v} = \mathbf{u}$ as the test function in the first equation of problem \eqref{eq:modelweak}, we immediately obtain the bound
\[
\nu_{-}\| \nabla \mathbf{u}\|_{\mathbf{L}^2(\Omega)}^2+\| \mathbf{u}\|_{\mathbf{L}^2(\Omega)}^2+ \| \mathbf{u}\|^s_{\mathbf{L}^s(\Omega)}
\leq \|\mathbf{f}\|_{\mathbf{H}^{-1}(\Omega)}\|\nabla\mathbf{u}\|_{\mathbf{L}^2(\Omega)},
\]
where we have used that $a_N (\mathbf{u};\mathbf{u},\mathbf{u}) = 0$ because $\mathbf{u} \in \mathbf{V}(\Omega)$. This bound shows that $\nu_{-}\|\nabla \mathbf{u}\|_{\mathbf{L}^2(\Omega)} \leq \|\mathbf{f}\|_{\mathbf{H}^{-1}(\Omega)}$. The stability bound for the temperature variable in \eqref{eq:estimate_T_coupled} follows similar arguments. The estimate for the pressure follows from the inf-sup condition \eqref{eq:infsup_cont}. This concludes the proof.
\end{proof}
\subsection{Uniqueness of solutions}

\emph{Without regularity assumptions on the solution, the derivation of uniqueness of solutions for the nonlinear system \eqref{eq:modelweak} appears problematic.} Let us show the uniqueness of solutions under suitable assumptions.

\begin{theorem}[uniqueness of solutions]\label{thm:uniq_coupled_3D}
Let the assumptions of Theorem \ref{th:existence_coupled_problem} hold. Let $d=3$ and assume that problem \eqref{eq:modelweak} has a solution $(\mathbf{u}_1,\mathsf{p}_1,T_1)\in\mathbf{W}^{1,3}(\Omega) \cap \mathbf{H}^1_0 (\Omega) \times L^2_0(\Omega) \times W^{1,3}(\Omega) \cap H^1_0 (\Omega)$ such that
\begin{align}
\label{eq:small_data_coupled}
\frac{\mathcal{C}_N}{\nu_{-}^2}\|\mathbf{f}\|_{\mathbf{H}^{-1}(\Omega)} + \frac{\mathcal{L}_{\nu}\mathcal{C}_{6 \hookrightarrow 2}\mathcal{C}^2_{4\hookrightarrow 2}\|g\|_{H^{-1}(\Omega)}\| \nabla \mathbf{u}_1 \|_{\mathbf{L}^3(\Omega)}}{\nu_{-}\kappa_{-}\left( \kappa_{-} - \mathcal{L}_{\kappa}\mathcal{C}_{6 \hookrightarrow 2} \| \nabla T_1 \|_{{\mathbf{L}}^3(\Omega)} \right)}  & < 1,
\\
\label{eq:small_data_coupled_2}
\mathcal{L}_{\kappa}\mathcal{C}_{6 \hookrightarrow 2}\| \nabla T_1 \|_{{\mathbf{L}}^3(\Omega)}  & <  \kappa_{-}.
\end{align}
Then, \eqref{eq:modelweak} has no other solution $(\mathbf{u_2},\mathsf{p}_2,T_2)\in\mathbf{H}^1_0(\Omega) \times L^2_0(\Omega) \times H^1_0(\Omega)$. Here, $\mathcal{C}_{6 \hookrightarrow 2}$ and $\mathcal{C}_{4 \hookrightarrow 2}$ denote the best constants in the embedding $H^1_0(\Omega) \hookrightarrow L^6(\Omega)$ and $\mathbf{H}^1_0(\Omega) \hookrightarrow \mathbf{L}^4(\Omega)$, respectively, and $\mathcal{L}_{\nu}$ and $\mathcal{L}_{\kappa}$ are the Lipschitz constants of $\nu$ and $\kappa$, respectively.
\end{theorem}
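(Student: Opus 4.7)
The plan is to mirror the strategies of Theorems \ref{th:BDF_uniq} and \ref{th:heat_uniqueness}, chaining an $H^1$-estimate for the temperature error to one for the velocity error, and then closing the loop through \eqref{eq:small_data_coupled}. Assume a second weak solution $(\mathbf{u}_2,\mathsf{p}_2,T_2)\in\mathbf{H}^1_0(\Omega)\times L_0^2(\Omega)\times H_0^1(\Omega)$ and set $(\mathbf{u},\mathsf{p},T):=(\mathbf{u}_1-\mathbf{u}_2,\mathsf{p}_1-\mathsf{p}_2,T_1-T_2)\in\mathbf{V}(\Omega)\times L_0^2(\Omega)\times H_0^1(\Omega)$. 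Subtracting the two instances of \eqref{eq:modelweak}, I would decompose the nonlinear differences through the identities $(\mathbf{u}_1\cdot\nabla)\mathbf{u}_1-(\mathbf{u}_2\cdot\nabla)\mathbf{u}_2=(\mathbf{u}\cdot\nabla)\mathbf{u}_1+(\mathbf{u}_2\cdot\nabla)\mathbf{u}$ and $\mathbf{u}_1\cdot\nabla T_1-\mathbf{u}_2\cdot\nabla T_2=\mathbf{u}\cdot\nabla T_1+\mathbf{u}_2\cdot\nabla T$, and isolate the two coefficient-perturbation integrals $\int(\nu(T_1)-\nu(T_2))\nabla\mathbf{u}_1\cdot\nabla\mathbf{v}\,\mathrm{d}x$ and $\int(\kappa(T_1)-\kappa(T_2))\nabla T_1\cdot\nabla S\,\mathrm{d}x$.

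I would then test the error equations with $(\mathbf{u},T)$. The skew-symmetry identities \eqref{eq:properties_navier} annihilate $a_N(\mathbf{u}_2;\mathbf{u},\mathbf{u})$ and $\int(\mathbf{u}_2\cdot\nabla T)T\,\mathrm{d}x$; the monotonicity of $\mathbf{w}\mapsto|\mathbf{w}|^{s-2}\mathbf{w}$ (valid for $s\geq 2$) renders the Forchheimer difference nonnegative, so it may be discarded; and $b(\mathbf{u},\mathsf{p})=0$ removes the pressure. The remaining terms are controlled by \eqref{eq:estimates_navier}, the Lipschitz continuity of $\nu$ and $\kappa$, and three H\"older/Sobolev bounds with carefully chosen exponents: for the viscosity perturbation $|\int(\nu(T_1)-\nu(T_2))\nabla\mathbf{u}_1\cdot\nabla\mathbf{u}\,\mathrm{d}x|\leq \mathcal{L}_\nu\mathcal{C}_{6\hookrightarrow 2}\|\nabla T\|_{\mathbf{L}^2}\|\nabla\mathbf{u}_1\|_{\mathbf{L}^3}\|\nabla\mathbf{u}\|_{\mathbf{L}^2}$ with exponents $(6,3,2)$, analogously for the diffusivity perturbation, and for the cross-convective term $|\int(\mathbf{u}\cdot\nabla T_1)T\,\mathrm{d}x|\leq\mathcal{C}_{4\hookrightarrow 2}^2\|\nabla T_1\|_{\mathbf{L}^2}\|\nabla\mathbf{u}\|_{\mathbf{L}^2}\|\nabla T\|_{\mathbf{L}^2}$ with exponents $(4,2,4)$. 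Combining these with the stability bounds \eqref{eq:estimate_grad_u_coupled}--\eqref{eq:estimate_T_coupled} applied to $\mathbf{u}_1$ and $T_1$, I expect to arrive at the two coupled inequalities
\[
\nu_-\bigl(1-\nu_-^{-2}\mathcal{C}_N\|\mathbf{f}\|_{\mathbf{H}^{-1}(\Omega)}\bigr)\|\nabla\mathbf{u}\|_{\mathbf{L}^2(\Omega)}\leq \mathcal{L}_\nu\mathcal{C}_{6\hookrightarrow 2}\|\nabla\mathbf{u}_1\|_{\mathbf{L}^3(\Omega)}\|\nabla T\|_{\mathbf{L}^2(\Omega)},
\]
\[
\bigl(\kappa_--\mathcal{L}_\kappa\mathcal{C}_{6\hookrightarrow 2}\|\nabla T_1\|_{\mathbf{L}^3(\Omega)}\bigr)\|\nabla T\|_{\mathbf{L}^2(\Omega)}\leq \kappa_-^{-1}\mathcal{C}_{4\hookrightarrow 2}^2\|g\|_{H^{-1}(\Omega)}\|\nabla\mathbf{u}\|_{\mathbf{L}^2(\Omega)}.
\]

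Condition \eqref{eq:small_data_coupled_2} guarantees that the factor in front of $\|\nabla T\|_{\mathbf{L}^2(\Omega)}$ in the second line is strictly positive, so dividing and substituting into the first yields $\|\nabla\mathbf{u}\|_{\mathbf{L}^2(\Omega)}\leq \Xi\,\|\nabla\mathbf{u}\|_{\mathbf{L}^2(\Omega)}$, where $\Xi$ is precisely the left-hand side of \eqref{eq:small_data_coupled}. The strict bound $\Xi<1$ forces $\mathbf{u}\equiv\mathbf{0}$; the temperature inequality then gives $T\equiv 0$; and finally the leftover pressure equation $b(\mathbf{v},\mathsf{p})=0$ for every $\mathbf{v}\in\mathbf{H}^1_0(\Omega)$ together with the inf-sup condition \eqref{eq:infsup_cont} yields $\mathsf{p}\equiv 0$. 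The main obstacle I anticipate is the bookkeeping of H\"older exponents: the extra $W^{1,3}$-regularity of $\mathbf{u}_1$ and $T_1$ must be spent precisely on the two coefficient-perturbation integrals, whereas the mixed convective term $\int(\mathbf{u}\cdot\nabla T_1)T\,\mathrm{d}x$ must be closed using only $\|\nabla T_1\|_{\mathbf{L}^2(\Omega)}$ (from stability) and the embedding $\mathbf{H}^1_0(\Omega)\hookrightarrow\mathbf{L}^4(\Omega)$; any other split would produce a second factor of $\|\nabla T_1\|_{\mathbf{L}^3(\Omega)}$ that could not be absorbed, and it is this delicate assignment that explains the particular algebraic form of \eqref{eq:small_data_coupled}.
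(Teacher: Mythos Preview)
Your proposal is correct and follows essentially the same route as the paper: subtract the two solutions, test the temperature and momentum error equations with $T$ and $\mathbf{u}$, drop the Forchheimer difference by monotonicity, use the $(6,3,2)$ H\"older split on the $\nu$- and $\kappa$-perturbation integrals and the $(4,2,4)$ split with the stability bound $\|\nabla T_1\|_{\mathbf{L}^2(\Omega)}\leq\kappa_-^{-1}\|g\|_{H^{-1}(\Omega)}$ on the cross-convective term, then chain the two resulting inequalities via \eqref{eq:small_data_coupled_2} and \eqref{eq:small_data_coupled} to force $\mathbf{u}=\mathbf{0}$, $T=0$, and finally $\mathsf{p}=0$ by the inf-sup condition. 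Even the bookkeeping you flag as the main obstacle---reserving the $W^{1,3}$ regularity for the coefficient perturbations while closing the mixed convective term with only $\|\nabla T_1\|_{\mathbf{L}^2(\Omega)}$---is exactly how the paper proceeds.
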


\begin{proof}
Let $(\mathbf{u}_2,\mathsf{p}_2,T_2)$ be another solution of the coupled problem \eqref{eq:modelweak}. Define $\mathbf{u} := \mathbf{u}_1-\mathbf{u}_2 \in \mathbf{H}_0^1(\Omega)$, $\mathsf{p} := \mathsf{p}_1 - \mathsf{p}_2 \in L_0^2(\Omega)$, and $T := T_1 -T_2 \in H_0^1(\Omega)$. A simple calculation shows that $T$ verifies the following identity for every $S \in H^1_0(\Omega)$:
\begin{equation}\label{eq:uniqueness_1_heat}
	\int_\Omega \kappa(T_2)\nabla T\cdot \nabla S\mathrm{d}x + \int_\Omega (\mathbf{u}_1\cdot\nabla T_1-\mathbf{u}_2\cdot\nabla T_2)S\mathrm{d}x =\int_\Omega (\kappa(T_2)- \kappa(T_1))\nabla T_1\cdot\nabla S\mathrm{d}x.
\end{equation}
We now set $S=T$ and use that $\int_{\Omega} (\mathbf{u}_2 \cdot  \nabla T) T \mathrm{d}x = 0$ because $\mathbf{u}_2 \in \mathbf{V}(\Omega)$ to obtain
\begin{equation*}
	\int_\Omega \kappa(T_2)|\nabla T|^2 \mathrm{d}x+ \int_\Omega  \left(\mathbf{u} \cdot \nabla T_1 \right) T \mathrm{d}x=\int_\Omega (\kappa(T_2)- \kappa(T_1))\nabla T_1\cdot\nabla T\mathrm{d}x.
\end{equation*}
We use the assumptions on $\kappa$ presented in \S \ref{sec:main_assump},
suitable H\"older's inequalities, the bound $\|\nabla T_1\|_{\mathbf{L}^2(\Omega)}\leq \kappa_{-}^{-1}\|g\|_{H^{-1}(\Omega)}$, and standard Sobolev embeddings to obtain
\begin{multline}
\kappa_{-} \|\nabla T \|^2_{\mathbf{L}^2(\Omega)} \leq \kappa_{-}^{-1}\mathcal{C}^2_{4 \hookrightarrow 2}\| \nabla \mathbf{u} \|_{\mathbf{L}^2(\Omega)}\| g \|_{H^{-1}(\Omega)}  \|\nabla T \|_{\mathbf{L}^2(\Omega)}
\\
+ \mathcal{L}_{\kappa} \mathcal{C}_{6 \hookrightarrow 2}\|\nabla T_1 \|_{\mathbf{L}^3(\Omega)} \|\nabla T \|^2_{\mathbf{L}^2(\Omega)}.
\label{eq:aux_step}
\end{multline}
This estimate leads directly to the following bound:
\begin{equation}\label{eq:bound_T}
	\|\nabla T\|_{\mathbf{L}^2(\Omega)} \leq \left( \kappa_{-}-\mathcal{L}_{\kappa}\mathcal{C}_{6 \hookrightarrow 2}\| \nabla T_1 \|_{\mathbf{L}^3(\Omega)} \right)^{-1}\kappa_{-}^{-1} \mathcal{C}^2_{4 \hookrightarrow 2} \| \nabla \mathbf{u} \|_{\mathbf{L}^2(\Omega)}\| g \|_{H^{-1}(\Omega)}.
\end{equation}
We now apply arguments similar to those we used for \eqref{eq:uniqueness_1_heat} and obtain
\begin{multline}\label{eq:uniqueness_coupled_fluid}
\int_\Omega\Big(\nu(T_2)\nabla\mathbf{u}\cdot\nabla\mathbf{v}+(\mathbf{u}_1\cdot\nabla)\mathbf{u}_1\cdot\mathbf{v}-(\mathbf{u}_2\cdot\nabla)\mathbf{u}_2\cdot\mathbf{v}+ \mathbf{u}\cdot\mathbf{v} \\
+|\mathbf{u}_1|^{s-2}\mathbf{u}_1\cdot\mathbf{v}-|\mathbf{u}_2|^{s-2}\mathbf{u}_2\cdot\mathbf{v}\Big)\mathrm{d}x=\int_\Omega(\nu(T_2)-\nu(T_1))\nabla\mathbf{u}_1\cdot\nabla\mathbf{v}\mathrm{d}x
\end{multline}
for all $\mathbf{v}\in\mathbf{V}(\Omega)$. To control the term $\|\nabla \mathbf{u}\|_{\mathbf{L}^2(\Omega)}$, we set $\mathbf{v}=\mathbf{u}$ in \eqref{eq:uniqueness_coupled_fluid}, invoke \cite[Chapter I, Lemma 4.4]{MR1230384}, and apply the estimates \eqref{eq:estimates_navier} and \eqref{eq:estimate_grad_u} to conclude that
\begin{multline*}
\nu_{-}\|\nabla \mathbf{u}\|_{\mathbf{L}^2(\Omega)}^2
\leq
\nu_{-}^{-1} \mathcal{C}_{N} \|\nabla \mathbf{u}\|_{\mathbf{L}^2(\Omega)}^2  \|\mathbf{f}\|_{\mathbf{H}^{-1}(\Omega)}
\\
+ \mathcal{L}_{\nu} \mathcal{C}_{6 \hookrightarrow 2} \|\nabla T \|_{\mathbf{L}^2(\Omega)}\|\nabla \mathbf{u}_1\|_{\mathbf{L}^3(\Omega)}\|\nabla \mathbf{u} \|_{\mathbf{L}^2(\Omega)}.
\end{multline*}
Replacing \eqref{eq:bound_T} into the previous bound we obtain
\begin{equation*}
\| \nabla \mathbf{u} \|^2_{\mathbf{L}^2(\Omega)} \left(1 - \frac{\mathcal{C}_N}{\nu_{-}^2}\|\mathbf{f}\|_{\mathbf{H}^{-1}(\Omega)} - \frac{\mathcal{L}_{\nu}\mathcal{C}_{6 \hookrightarrow 2}\mathcal{C}^2_{4\hookrightarrow 2}\|g\|_{H^{-1}(\Omega)}\| \nabla \mathbf{u}_1 \|_{\mathbf{L}^3(\Omega)}}{\nu_{-}\kappa_{-}\left( \kappa_{-} - \mathcal{L}_{\kappa}\mathcal{C}_{6 \hookrightarrow 2} \| \nabla T_1 \|_{\mathbf{L}^3(\Omega)} \right)} \right) \leq 0,
\end{equation*}
which, in view of \eqref{eq:small_data_coupled}, immediately shows that $\mathbf{u}=\mathbf{0}$ and therefore $\mathbf{u}_1 = \mathbf{u}_2$. We now substitute $\mathbf{u}=\mathbf{0}$ in \eqref{eq:aux_step} and use assumption \eqref{eq:small_data_coupled_2} to obtain that $T = 0$. Finally, using the inf-sup condition \eqref{eq:infsup_cont}, we arrive at $\mathsf{p} = 0$. This concludes the proof.
\end{proof}

Since different Sobolev embedding results hold in two dimensions, it is possible to improve the regularity assumptions under which it is possible to obtain uniqueness.

\begin{theorem}[uniqueness of solutions]\label{thm:uniq_coupled_2D}
Let the assumptions of Theorem \ref{th:existence_coupled_problem} hold. Let $d=2$ and assume that problem \eqref{eq:modelweak} has a solution $(\mathbf{u}_1,\mathsf{p}_1,T_1)\in\mathbf{W}^{1,2+\varepsilon}(\Omega) \cap \mathbf{H}^1_0 (\Omega)\times L^2_0(\Omega) \times W^{1,2+\varepsilon}(\Omega) \cap H^1_0 (\Omega)$ for some $\epsilon>0$ such that
\begin{align}
\label{eq:small_data_coupled_2D}
\frac{\mathcal{C}_N}{\nu_{-}^2}\|\mathbf{f}\|_{\mathbf{H}^{-1}(\Omega)} + \frac{\mathcal{L}_{\nu}\mathcal{C}_{\varepsilon}\mathcal{C}^2_{4\hookrightarrow 2}\|g\|_{H^{-1}(\Omega)}\| \nabla \mathbf{u}_1 \|_{\mathbf{L}^{2+\varepsilon}(\Omega)}}{\nu_{-}\kappa_{-}\left( \kappa_{-} - \mathcal{L}_{\kappa}\mathcal{C}_{\varepsilon} \| \nabla T_1 \|_{{\mathbf{L}}^{2+\varepsilon}(\Omega)} \right)} & < 1,
\\
\label{eq:small_data_coupled_2D_2}
\mathcal{L}_{\kappa}\mathcal{C}_{\varepsilon} \| \nabla T_1 \|_{{\mathbf{L}}^{2+\varepsilon}(\Omega)} & < \kappa_{-}.
\end{align}
Then, problem (\ref{eq:modelweak}) has no other solution $(\mathbf{u_2},\mathsf{p}_2,T_2)\in\mathbf{H}^1_0(\Omega) \times L^2_0(\Omega) \times H^1_0(\Omega)$. Here, $\mathcal{C}_{\varepsilon}$ and $\mathcal{C}_{4 \hookrightarrow 2}$ denote the best constants in $H^1_0(\Omega) \hookrightarrow L^{2(2+\varepsilon)/\varepsilon}(\Omega)$ and $\mathbf{H}^1_0(\Omega) \hookrightarrow \mathbf{L}^4(\Omega)$, respectively, and $\mathcal{L}_{\nu}$ and $\mathcal{L}_{\kappa}$ are the Lipschitz constants of $\nu$ and $\kappa$, respectively.
\end{theorem}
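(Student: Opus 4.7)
The plan is to adapt the proof of Theorem \ref{thm:uniq_coupled_3D} line by line, the only substantive change being the way the terms $\int (\kappa(T_2) - \kappa(T_1))\nabla T_1 \cdot \nabla S\,\mathrm{d}x$ and $\int (\nu(T_2) - \nu(T_1))\nabla \mathbf{u}_1 \cdot \nabla \mathbf{v}\,\mathrm{d}x$ are estimated by H\"older's inequality. In three dimensions these terms were split using $L^6 \cdot L^3 \cdot L^2$ with the critical embedding $H^1_0(\Omega)\hookrightarrow L^6(\Omega)$, which forced $\nabla T_1, \nabla\mathbf{u}_1$ to live in $\mathbf{L}^3(\Omega)$. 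In two dimensions $H^1_0(\Omega) \hookrightarrow L^{\iota}(\Omega)$ for every finite $\iota$, so one may instead use the triple $L^{2(2+\varepsilon)/\varepsilon} \cdot L^{2+\varepsilon} \cdot L^2$, whose exponents indeed satisfy $\varepsilon/(2(2+\varepsilon)) + 1/(2+\varepsilon) + 1/2 = 1$.

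The execution is as follows. Let $(\mathbf{u}_2,\mathsf{p}_2,T_2)$ be a second weak solution and set $\mathbf{u}:=\mathbf{u}_1-\mathbf{u}_2$, $\mathsf{p}:=\mathsf{p}_1-\mathsf{p}_2$, $T:=T_1-T_2$. First I would subtract the two temperature equations to recover the identity \eqref{eq:uniqueness_1_heat} verbatim, test with $S=T$, and invoke $\mathbf{u}_2 \in \mathbf{V}(\Omega)$ to kill $\int_\Omega (\mathbf{u}_2\cdot\nabla T)T\,\mathrm{d}x$. Then I would estimate the right-hand side using the Lipschitz bound $|\kappa(T_2)-\kappa(T_1)|\leq \mathcal{L}_\kappa |T|$ together with H\"older in the new triple and the embedding $\|T\|_{L^{2(2+\varepsilon)/\varepsilon}(\Omega)} \leq \mathcal{C}_\varepsilon \|\nabla T\|_{\mathbf{L}^2(\Omega)}$; the convective remainder $\int_\Omega (\mathbf{u}\cdot\nabla T_1)T\,\mathrm{d}x$ is controlled exactly as in 3D through $\mathbf{L}^4 \cdot \mathbf{L}^2 \cdot L^4$ together with the a priori bound $\|\nabla T_1\|_{\mathbf{L}^2(\Omega)}\leq \kappa_-^{-1}\|g\|_{H^{-1}(\Omega)}$ from Theorem \ref{th:existence_coupled_problem}. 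Assumption \eqref{eq:small_data_coupled_2D_2} is precisely what is needed to absorb the Lipschitz term on the left, producing the 2D analogue of \eqref{eq:bound_T}, namely
\begin{equation*}
  \|\nabla T\|_{\mathbf{L}^2(\Omega)}
  \leq
  \bigl(\kappa_- - \mathcal{L}_\kappa \mathcal{C}_\varepsilon \|\nabla T_1\|_{\mathbf{L}^{2+\varepsilon}(\Omega)}\bigr)^{-1}
  \kappa_-^{-1}\mathcal{C}^2_{4\hookrightarrow 2}
  \|\nabla\mathbf{u}\|_{\mathbf{L}^2(\Omega)}\|g\|_{H^{-1}(\Omega)}.
\end{equation*}

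Next I would subtract the two momentum equations to obtain the analogue of \eqref{eq:uniqueness_coupled_fluid}, test with $\mathbf{v}=\mathbf{u}\in\mathbf{V}(\Omega)$, and apply \cite[Chapter I, Lemma 4.4]{MR1230384} to handle $(\mathbf{u}_1|\mathbf{u}_1|^{s-2} - \mathbf{u}_2|\mathbf{u}_2|^{s-2},\mathbf{u})$, the trilinear bound \eqref{eq:estimates_navier}, and the a priori bound \eqref{eq:estimate_grad_u_coupled} to obtain
\begin{equation*}
  \nu_-\|\nabla\mathbf{u}\|_{\mathbf{L}^2(\Omega)}^2
  \leq
  \nu_-^{-1}\mathcal{C}_N\|\mathbf{f}\|_{\mathbf{H}^{-1}(\Omega)}\|\nabla\mathbf{u}\|_{\mathbf{L}^2(\Omega)}^2
  + \mathcal{L}_\nu \mathcal{C}_\varepsilon \|\nabla T\|_{\mathbf{L}^2(\Omega)}\|\nabla\mathbf{u}_1\|_{\mathbf{L}^{2+\varepsilon}(\Omega)}\|\nabla\mathbf{u}\|_{\mathbf{L}^2(\Omega)},
\end{equation*}
where the viscosity remainder is estimated in the new 2D triple. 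Substituting the bound for $\|\nabla T\|_{\mathbf{L}^2(\Omega)}$ yields $\|\nabla\mathbf{u}\|_{\mathbf{L}^2(\Omega)}^2(1 - \mathcal{S})\leq 0$ with $\mathcal{S}$ equal to the left-hand side of \eqref{eq:small_data_coupled_2D}; hence $\mathbf{u}=\mathbf{0}$, then $T=0$ by substitution back, and finally $\mathsf{p}=0$ by the inf-sup condition \eqref{eq:infsup_cont}.

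The argument is almost entirely bookkeeping; the only genuine point requiring care is selecting the correct H\"older triple so that the exponent appearing on $\nabla T_1, \nabla\mathbf{u}_1$ is $2+\varepsilon$ (matching the assumed regularity) while the remaining factor on $T$ or $\mathbf{u}$ can be absorbed via a valid 2D Sobolev embedding into $L^{2(2+\varepsilon)/\varepsilon}(\Omega)$. No new structural obstacle appears relative to the 3D case, so the final conclusion follows at once from \eqref{eq:small_data_coupled_2D}--\eqref{eq:small_data_coupled_2D_2}.
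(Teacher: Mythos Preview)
Your proposal is correct and matches the paper's intended argument exactly: the paper does not spell out a proof for this theorem but, as indicated by the surrounding text and the analogous proof of Theorem~\ref{thm:apriori_error_2D}, the only change from the three-dimensional case is replacing the H\"older triple $L^6\cdot L^3\cdot L^2$ and the embedding $H^1_0(\Omega)\hookrightarrow L^6(\Omega)$ by $L^{2(2+\varepsilon)/\varepsilon}\cdot L^{2+\varepsilon}\cdot L^2$ and $H^1_0(\Omega)\hookrightarrow L^{2(2+\varepsilon)/\varepsilon}(\Omega)$, precisely as you do.
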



\section{Finite element approximation}
\label{sec:fem}
In this section, we propose and analyze a finite element scheme to approximate solutions of the nonlinear coupled problem \eqref{eq:modelweak}. As far as the analysis is concerned, we study the convergence properties of the discretization scheme and derive a priori error bounds. To this end, we first introduce some notions and basic ingredients \cite{MR3097958,MR2373954,CiarletBook,Guermond-Ern}.

\subsection{Notation and basic components}
From now on we assume that $\Omega$ is a Lipschitz polytope so that it can be exactly triangulated. Let $\mathscr{T}_h = \{K\}$ be a conforming partition of $\bar{\Omega}$ into closed simplices $K$ of size $h_K = \operatorname*{diam}(K)$. Here, $h:= \max \{ h_K : K \in \mathscr{T}_h \}$. We denote by $\mathbb{T}=\{\mathscr{T}_h\}_{h>0}$ a collection of conforming meshes $\mathscr{T}_h$, which are refinements of an initial mesh $\mathscr{T}_0$. We assume that the collection $\mathbb{T}$ satisfies the so-called shape regularity condition.

\subsection{Finite element spaces}
Let $\mathscr{T}_h$ be a mesh in $\mathbb{T}$. To approximate the velocity field $\mathbf{u}$ and the pressure $\mathsf{p}$ of the fluid, we consider a pair $(\mathbf{X}_h,M_h)$ of finite element spaces satisfying a uniform discrete inf-sup condition: There exists a constant $\tilde{\beta}>0$ independent of $h$ such that
\begin{equation}\label{eq:discrete_infsup}
\inf _{\mathsf{q}_h \in {M}_h} \sup _{\mathbf{v}_h \in \mathbf{X}_h} \frac{ \int_{\Omega} \mathsf{q}_h\operatorname{div} \mathbf{v}_h \mathrm{d}x }{\left\|\nabla\mathbf{v}_h\right\|_{\mathbf{L}^2(\Omega)}\left\|\mathsf{q}_h\right\|_{L^2(\Omega)}} \geqslant \tilde{\beta}>0.
\end{equation}
We will look in particular at the following pairs, which are significant in the literature:
\begin{itemize}[leftmargin=*]
\item[(1)] The \emph{lowest order Taylor--Hood element} introduced in \cite{MR0339677} for $d=2$; see also \cite[Chapter II, Section 4.2]{MR851383}, \cite[Section 4.2.5]{Guermond-Ern}, \cite[Section 8.8.2]{MR3097958}: In this case,
\begin{align}
\label{TH:vel_space}
\mathbf{X}_h & = \{\mathbf{v}_h \in\mathbf{C}(\bar{\Omega}): \mathbf{v}_h|_K \in\left[\mathbb{P}_2(K)\right]^d
~\forall K \in \mathscr{T}_h \}\cap \mathbf{H}_0^1(\Omega),
\\
\label{TH:press_space}
M_h & = \{\mathsf{q}_h \in L_{0}^2(\Omega) \cap C(\bar{\Omega}) : \mathsf{q}_h|_K \in \mathbb{P}_1(K)
~\forall K \in \mathscr{T}_h\}.
\end{align}
\item[(2)] The \emph{mini element} introduced in \cite{MR0799997} for $d=2$; see also \cite[Chapter II, Section 4.1]{MR851383}, \cite[Section 4.2.4]{Guermond-Ern}, \cite[Section 8.4.2]{MR3097958}: In this scenario,
\begin{align}
\label{ME:vel_space}
\mathbf{X}_h
&
=\{\mathbf{v}_{h}\in\mathbf{C}(\bar{\Omega}):\mathbf{v}_{h}|_{K}\in[\mathbb{W}(K)]^{d}
~\forall K\in\T_h \} \cap \mathbf{H}_0^1(\Omega),
\\
\label{ME:press_space}
M_h &
=
\{ \mathsf{q}_{h}\in L_0^2(\Omega)\cap C(\bar{\Omega}): \mathsf{q}_{h}|_{K}\in\mathbb{P}_1(K)
~\forall K\in\T_h \},
\end{align}
where $\mathbb{W}(K):=\mathbb{P}_1(K)\oplus \mathbb{B}(K)$, and $\mathbb{B}(K)$ denotes the space spanned by a local bubble function.
\end{itemize}

A proof of the inf-sup condition \eqref{eq:discrete_infsup} for the mini element can be found in \cite[Lemma 4.20]{Guermond-Ern}. Provided that the mesh $\T_h$ contains at least three triangles in two dimensions and that each tetrahedron has at least one internal vertex in three dimensions, a proof of \eqref{eq:discrete_infsup} for the Taylor--Hood element can be found in \cite[Theorem 8.8.1]{MR3097958} and \cite[Theorem 8.8.2]{MR3097958}, respectively.

To approximate the temperature variable of the fluid, we consider the space
\begin{equation}\label{eq:discrete_temp_space}
	Y_h:=\left\{S_h \in C(\bar{\Omega}):S_h|_K \in \mathbb{P}_{\mathfrak{r}}(K)~\forall K \in \mathscr{T}_h\right\}\cap H_0^1(\Omega),
\end{equation}
where $\mathfrak{r}=1$ if the mini element is used to approximate $(\mathbf{u},\mathsf{p})$, or $\mathfrak{r}=2$ if we approximate $(\mathbf{u},\mathsf{p})$ with the Taylor--Hood element.

The finite element spaces $\mathbf{X}_h$ and $Y_h$ satisfy the following basic approximation properties: For all $\mathbf{v} \in \mathbf{H}_0^1(\Omega)$ and $S \in H_0^1(\Omega)$, we have
\begin{equation}
\label{eq:density_1}
\lim_{h \rightarrow 0} \left( \inf_{\mathbf{v}_h \in \mathbf{X}_h} \| \nabla(\mathbf{v} - \mathbf{v}_h)\|_{\mathbf{L}^2(\Omega)} \right)  = 0,
\qquad
\lim_{h \rightarrow 0} \left( \inf_{S_h \in Y_h} \| \nabla(S - S_h)\|_{\mathbf{L}^2(\Omega)} \right)  = 0.
 \end{equation}
We also have the existence of interpolation operators $\mathbf{\Pi}_h: \mathbf{H}_0^1(\Omega) \rightarrow \mathbf{X}_h$ and $\Pi_h: H_0^1(\Omega) \rightarrow Y_h$, so that, for every $\mathbf{v} \in \mathbf{H}_0^1(\Omega)$ and $S \in H_0^1(\Omega)$, we have
\begin{equation}
\lim_{h \rightarrow 0}
\| \nabla(\mathbf{v} - \mathbf{\Pi}_h \mathbf{v})\|_{\mathbf{L}^2(\Omega)} \rightarrow 0,
 \qquad
\lim_{h \rightarrow 0}
\| \nabla(S - \Pi_h S) \|_{\mathbf{L}^2(\Omega)} \rightarrow 0.
\label{eq:density_2}
\end{equation}

Finally, we introduce the space $\mathbf{V}_h:= \{ \mathbf{v}_h \in \mathbf{X}_h : \int_{\Omega} \mathsf{q}_h \operatorname*{div}\mathbf{v}_h \mathrm{d}x = 0 \, \forall \mathsf{q}_h \in {M}_h \}$.

\subsection{The discrete coupled problem}

Before presenting a discrete scheme for the nonlinear problem \eqref{eq:modelweak}, we introduce some basic ingredients. First, we define
\begin{align*}\label{eq:discrete_nonlinear}
& \mathfrak{a}: Y_h \times \mathbf{X}_h \times \mathbf{X}_h\to\mathbb{R},
\qquad
\mathfrak{a}(T_h;\mathbf{u}_h,\mathbf{v}_h):=\int_{\Omega} \left(\nu(T_h) \nabla\mathbf{u}_h\cdot\nabla\mathbf{v}_h+\mathbf{u}_h\cdot\mathbf{v}_h\right)\mathrm{d}x,
\\
& \mathfrak{b}: Y_h \times Y_h \times Y_h \to \mathbb{R},
\qquad
\mathfrak{b}(R_h;T_h,S_h):=\int_{\Omega} \kappa(R_h) \nabla T_h\cdot\nabla S_h \mathrm{d}x.
\end{align*}
Since functions in $\mathbf{V}_{h}$ are generally not divergence-free, we follow \cite[Chapter II, \S 3.2]{MR0603444} and \cite[Chapter IV, \S 3]{MR0548867} and introduce a slight variation of the form $a_N$ that preserves the antisymmetry on a discrete level. To be precise, we define  $a_{SN}:\mathbf{X}_h^3 \rightarrow \mathbb{R}$ by
\begin{align}\label{eq:discrete_SN}
	a_{SN}(\mathbf{u}_h;\mathbf{v}_h,\mathbf{w}_h):= \frac{1}{2}\left[a_N(\mathbf{u}_h;\mathbf{v}_h,\mathbf{w}_h)-a_N(\mathbf{u}_h;\mathbf{w}_h,\mathbf{v}_h)\right].
\end{align}
Note that $a_{SN}(\mathbf{u}_h;\mathbf{v}_h,\mathbf{v}_h) = 0$ for $\mathbf{u}_h, \mathbf{v}_h \in \mathbf{X}_h$. Similarly, we introduce the form $a_{ST}: \mathbf{X}_h \times Y_h \times Y_h \rightarrow \mathbb{R}$ by
\begin{equation}\label{eq:discrete_ST}
a_{ST}(\mathbf{v}_h;T_h,S_h):= \frac{1}{2}\int_\Omega \left((\mathbf{v}_h\cdot\nabla T_h)S_h - (\mathbf{v}_h\cdot\nabla S_h)T_h\right)\mathrm{d}x.
\end{equation}
Note that $a_{ST}(\mathbf{v}_h;S_h,S_h) = 0$ for $\mathbf{v}_h \in \mathbf{X}_h$ and $S_h \in Y_h$.

With all these ingredients, we introduce the following discrete approximation of \eqref{eq:modelweak}: Given $\mathbf{f}\in\mathbf{H}^{-1}(\Omega)$ and $g\in H^{-1}(\Omega)$, find $(\mathbf{u}_h,\mathsf{p}_h,T_h) \in \mathbf{X}_h \times M_h \times Y_h$ such that
\begin{multline}
\label{eq:modelweak_discrete}
\mathfrak{a}(T_h;\mathbf{u}_h, \mathbf{v}_h)
+
a_{SN}(\mathbf{u}_h;\mathbf{u}_h,\mathbf{v}_h)
+
a_{F}(\mathbf{u}_h;\mathbf{u}_h,\mathbf{v}_h)
+
b(\mathbf{v}_h,\mathsf{p}_h)=\langle \mathbf{f}, \mathbf{v}_h \rangle,
\\
b(\mathbf{u}_h,\mathsf{q}_h)=0,
\qquad
\mathfrak{b}(T_h;T_h, S_h)+a_{ST}(\mathbf{u}_h;T_h,S_h)=\langle g,S_h\rangle,
\end{multline}
for all $(\mathbf{v}_h,\mathsf{q}_h,S_h) \in \mathbf{X}_h \times M_h \times Y_h$ and $s \in [3,4]$.

In the following we prove that for every $h>0$ the discrete problem \eqref{eq:modelweak_discrete} has a solution and that the sequence of solutions $\{ (\mathbf{u}_h, \mathsf{p}_h, T_h )\}_{h>0}$ is uniformly bounded in $\mathbf{H}_0^1(\Omega) \times L_0^2(\Omega) \times H_0^1(\Omega)$.

\begin{theorem}[existence of solutions]\label{th:existence_coupled_problem_discrete}
In the framework of Theorem \ref{th:existence_coupled_problem}, there is at least one solution $(\mathbf{u}_h,\mathsf{p}_h,T_h) \in \mathbf{X}_h \times M_h \times Y_h$ for problem \eqref{eq:modelweak_discrete}. Moreover,
\begin{align}
\label{eq:estimate_grad_u_h}
\|\nabla \mathbf{u}_h\|_{\mathbf{L}^2(\Omega)}
& \leq \nu_{-}^{-1} \|\mathbf{f}\|_{\mathbf{H}^{-1}(\Omega)},
\quad
\| \mathsf{p}_h \|_{L^2(\Omega)}
\leq
\tilde{\beta}^{-1} \Lambda(\mathbf{f})\|\mathbf{f}\|_{\mathbf{H}^{-1}(\Omega)},
\\
\label{eq:estimate_T_h}
\|\nabla T_h \|_{\mathbf{L}^2(\Omega)}
& \leq \kappa_{-}^{-1} \| g \|_{H^{-1}(\Omega)},
\end{align}
where $\Lambda$ is defined in \eqref{eq:Lambda_f}.
\end{theorem}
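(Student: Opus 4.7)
The plan is to mimic the structure of Theorem \ref{th:existence_coupled_problem}, but since the spaces $\mathbf{X}_h$, $M_h$, and $Y_h$ are finite-dimensional, the Galerkin approximation step disappears and we can apply Brouwer's theorem directly to the reduced discrete problem. The proof will be divided into three main steps: existence of the velocity-temperature pair via Brouwer, stability bounds, and recovery of the pressure.

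\textbf{Step 1 (reduced problem and a mapping $\Phi_h$).} First I would reduce \eqref{eq:modelweak_discrete} to the discretely divergence-free subspace $\mathbf{V}_h$: find $(\mathbf{u}_h, T_h) \in \mathbf{V}_h \times Y_h$ such that, for all $(\mathbf{v}_h, S_h) \in \mathbf{V}_h \times Y_h$,
\begin{align*}
\mathfrak{a}(T_h;\mathbf{u}_h,\mathbf{v}_h) + a_{SN}(\mathbf{u}_h;\mathbf{u}_h,\mathbf{v}_h) + a_F(\mathbf{u}_h;\mathbf{u}_h,\mathbf{v}_h) &= \langle \mathbf{f},\mathbf{v}_h\rangle,\\
\mathfrak{b}(T_h;T_h,S_h) + a_{ST}(\mathbf{u}_h;T_h,S_h) &= \langle g,S_h\rangle.
\end{align*}
Identifying the Hilbert space $\mathbf{V}_h \times Y_h$ with its dual via the inner product induced by $\|(\cdot,\cdot)\|_{\mathbf{H}_0^1(\Omega) \times H_0^1(\Omega)}$, I would define $\Phi_h : \mathbf{V}_h \times Y_h \to \mathbf{V}_h \times Y_h$ as the Riesz representative of the left-hand side minus the data, in direct analogy with \eqref{eq:definition_Phi} but with $a_N$ and the convective term in the heat equation replaced by their skew-symmetric counterparts $a_{SN}$ and $a_{ST}$.

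\textbf{Step 2 (Brouwer).} Continuity of $\Phi_h$ on the finite-dimensional space is immediate from the continuity of $\nu$, $\kappa$, and each of the forms involved. Evaluating $\langle \Phi_h(\boldsymbol{\mathcal{V}}_h),\boldsymbol{\mathcal{V}}_h\rangle$ for $\boldsymbol{\mathcal{V}}_h=(\mathbf{v}_h,S_h)$, I use the key identities $a_{SN}(\mathbf{v}_h;\mathbf{v}_h,\mathbf{v}_h)=0$, $a_{ST}(\mathbf{v}_h;S_h,S_h)=0$, and the sign property $a_F(\mathbf{v}_h;\mathbf{v}_h,\mathbf{v}_h)\geq 0$, together with the lower bounds in \eqref{eq:nu} and \eqref{eq:kappa}, to obtain
\[
\langle \Phi_h(\boldsymbol{\mathcal{V}}_h),\boldsymbol{\mathcal{V}}_h\rangle \geq \min\{\nu_-,\kappa_-\}\|(\mathbf{v}_h,S_h)\|_{\mathbf{H}_0^1 \times H_0^1}^2 - \|(\mathbf{f},g)\|_{\mathbf{H}^{-1}\times H^{-1}}\|(\mathbf{v}_h,S_h)\|_{\mathbf{H}_0^1 \times H_0^1},
\]
so that $\langle \Phi_h(\boldsymbol{\mathcal{V}}_h),\boldsymbol{\mathcal{V}}_h\rangle \geq 0$ on the sphere of radius $\delta := \min\{\nu_-,\kappa_-\}^{-1}\|(\mathbf{f},g)\|_{\mathbf{H}^{-1}\times H^{-1}}$. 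The Brouwer corollary \cite[Chapter IV, Corollary 1.1]{MR851383} then yields a zero $(\mathbf{u}_h,T_h)$ of $\Phi_h$ with norm at most $\delta$.

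\textbf{Step 3 (sharpened stability bounds).} Testing the reduced problem with $\mathbf{v}_h=\mathbf{u}_h$ and $S_h=T_h$, I would once again exploit the vanishing of $a_{SN}(\mathbf{u}_h;\mathbf{u}_h,\mathbf{u}_h)$, $a_{ST}(\mathbf{u}_h;T_h,T_h)$, and the nonnegativity of $a_F(\mathbf{u}_h;\mathbf{u}_h,\mathbf{u}_h)$. This yields $\nu_-\|\nabla \mathbf{u}_h\|_{\mathbf{L}^2(\Omega)}^2 \leq \|\mathbf{f}\|_{\mathbf{H}^{-1}(\Omega)}\|\nabla \mathbf{u}_h\|_{\mathbf{L}^2(\Omega)}$ and $\kappa_-\|\nabla T_h\|_{\mathbf{L}^2(\Omega)}^2 \leq \|g\|_{H^{-1}(\Omega)}\|\nabla T_h\|_{\mathbf{L}^2(\Omega)}$, giving the first inequality in \eqref{eq:estimate_grad_u_h} and the bound \eqref{eq:estimate_T_h}. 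These are the analogues of \eqref{eq:estimate_grad_u} and improve the crude bound from Step 2.

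\textbf{Step 4 (pressure).} The residual functional $\mathbf{v}_h \mapsto \langle \mathbf{f},\mathbf{v}_h\rangle - \mathfrak{a}(T_h;\mathbf{u}_h,\mathbf{v}_h) - a_{SN}(\mathbf{u}_h;\mathbf{u}_h,\mathbf{v}_h) - a_F(\mathbf{u}_h;\mathbf{u}_h,\mathbf{v}_h)$ vanishes on $\mathbf{V}_h$ by construction. A standard argument based on the discrete inf-sup condition \eqref{eq:discrete_infsup} then produces a unique $\mathsf{p}_h \in M_h$ realizing this functional as $b(\mathbf{v}_h,\mathsf{p}_h)$ for every $\mathbf{v}_h \in \mathbf{X}_h$, so that $(\mathbf{u}_h,\mathsf{p}_h,T_h)$ solves \eqref{eq:modelweak_discrete}. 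To bound $\|\mathsf{p}_h\|_{L^2(\Omega)}$, I would estimate each term on the right-hand side by a constant times $\|\nabla \mathbf{v}_h\|_{\mathbf{L}^2(\Omega)}$: the viscous/reaction term via \eqref{eq:nu} together with $\mathbf{H}_0^1 \hookrightarrow \mathbf{L}^2$, the convective term via \eqref{eq:estimates_navier}, and the Forchheimer term via \eqref{eq:estimate_for_Forchheimer_2}. Using the velocity bound from Step 3 to replace $\|\nabla \mathbf{u}_h\|_{\mathbf{L}^2(\Omega)}$ by $\nu_-^{-1}\|\mathbf{f}\|_{\mathbf{H}^{-1}(\Omega)}$ and dividing by $\tilde{\beta}\|\nabla \mathbf{v}_h\|_{\mathbf{L}^2(\Omega)}$, the constant $\Lambda(\mathbf{f})$ defined in \eqref{eq:Lambda_f} arises naturally, yielding the claimed pressure bound in \eqref{eq:estimate_grad_u_h}.

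The main obstacle is essentially bookkeeping: carefully tracking the constants in Step 4 so that they assemble exactly into $\Lambda(\mathbf{f})$. Everything else is a direct, simplified transcription of the continuous argument, facilitated by the finite dimensionality of the discrete spaces and by the skew-symmetry built into $a_{SN}$ and $a_{ST}$.
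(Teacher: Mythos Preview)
Your proposal is correct and follows essentially the same approach as the paper: define $\Phi_h$ on $\mathbf{V}_h\times Y_h$, exploit the skew-symmetry of $a_{SN}$ and $a_{ST}$ to verify the sign condition on the sphere of radius $\delta$, apply \cite[Chapter IV, Corollary 1.1]{MR851383}, recover $\mathsf{p}_h$ via the discrete inf-sup condition \eqref{eq:discrete_infsup}, and obtain the stability bounds by the argument of Proposition~\ref{pro:stability_bounds}. The paper's proof is more terse and simply refers back to Theorem~\ref{theorem_coupled_existence} and Proposition~\ref{pro:stability_bounds}, but your more explicit write-up (particularly Steps~3 and~4) matches it in substance.
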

\begin{proof}
As in the proof of Theorem \ref{theorem_coupled_existence}, we define a suitable operator $\Phi_h$ from $\mathbf{V}_h \times Y_h$ into itself. Since $a_{SN}(\mathbf{u}_h;\mathbf{v}_h,\mathbf{v}_h) = 0$ for $\mathbf{u}_h, \mathbf{v}_h \in \mathbf{X}_h$ and $a_{ST}(\mathbf{v}_h;S_h,S_h) = 0$ for $\mathbf{v}_h \in \mathbf{X}_h$ and $S_h \in Y_h$, it can be proved that $\langle \Phi(\boldsymbol{\mathcal{V}}_h),\boldsymbol{\mathcal{V}}_h \rangle \geq 0$ for any $\boldsymbol{\mathcal{V}}_h = (\mathbf{v}_h,S_h) \in \mathbf{V}_h \times Y_h$ such that $\| (\mathbf{v}_h,S_h) \|_{\mathbf{H}_0^1(\Omega) \times H_0^1(\Omega)} = \delta$. The existence of a solution $(\mathbf{u}_h,\mathsf{p}_h,T_h) \in \mathbf{X}_h \times M_h \times Y_h$ therefore follows directly from \cite[Chapter IV, Corollary 1.1]{MR851383} and the discrete inf-sup condition \eqref{eq:discrete_infsup}. Finally, the discrete stability bounds \eqref{eq:estimate_grad_u_h}--\eqref{eq:estimate_T_h} follow from the arguments developed in the proof of Theorem \ref{th:existence_coupled_problem} (see step 5).
\end{proof}

As for the continuous case, we can also show the uniqueness of solutions for the discrete problem \eqref{eq:modelweak_discrete} under suitable assumptions.

\begin{theorem}[uniqueness of solutions]\label{thm:uniq_coupled_3D_discrete}
Let the assumptions of Theorem \ref{theorem_coupled_existence} hold. Let $d=3$ and assume that problem \eqref{eq:modelweak_discrete} has a solution ${(\mathbf{u}_h}_1,{\mathsf{p}_h}_1,{T_h}_1)\in\mathbf{W}^{1,3}(\Omega) \cap \mathbf{X}_h \times M_h \times W^{1,3}(\Omega) \cap Y_h$ such that
		\begin{align}
			\label{eq:small_data_coupled_discrete}
			\frac{\mathcal{C}_N}{\nu_{-}^2}\|\mathbf{f}\|_{\mathbf{H}^{-1}(\Omega)} + \frac{\mathcal{L}_{\nu}\mathcal{C}_{6 \hookrightarrow 2}\mathcal{C}^2_{4\hookrightarrow 2}\|g\|_{H^{-1}(\Omega)}\| \nabla {\mathbf{u}_h}_1 \|_{\mathbf{L}^3(\Omega)}}{\nu_{-}\kappa_{-}\left( \kappa_{-} - \mathcal{L}_{\kappa}\mathcal{C}_{6 \hookrightarrow 2} \| \nabla {T_h}_1 \|_{{\mathbf{L}}^3(\Omega)} \right)}  & < 1,
			\\
			\label{eq:small_data_coupled_discrete_2}
			\mathcal{L}_{\kappa}\mathcal{C}_{6 \hookrightarrow 2}\| \nabla {T_h}_1 \|_{{\mathbf{L}}^3(\Omega)}  & <  \kappa_{-}.
		\end{align}
		Then, \eqref{eq:modelweak_discrete} has no other solution $({\mathbf{u}_h}_2,{\mathsf{p}_h}_2,{T_h}_2) \in \mathbf{X}_h \times M_h \times Y_h$. Here, $\mathcal{C}_{6 \hookrightarrow 2}$ and $\mathcal{C}_{4 \hookrightarrow 2}$ denote the best constants in the embedding $H^1_0(\Omega) \hookrightarrow L^6(\Omega)$ and $\mathbf{H}^1_0(\Omega) \hookrightarrow \mathbf{L}^4(\Omega)$, respectively, and $\mathcal{L}_{\nu}$ and $\mathcal{L}_{\kappa}$ are the Lipschitz constants of $\nu$ and $\kappa$, respectively.
\end{theorem}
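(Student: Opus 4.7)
The strategy is to mirror the argument used in Theorem \ref{thm:uniq_coupled_3D}, exploiting that the antisymmetrized forms $a_{SN}$ and $a_{ST}$ were designed precisely to replace the cancellations that required $\mathbf{u}\in\mathbf{V}(\Omega)$ in the continuous case. We let $({\mathbf{u}_h}_2,{\mathsf{p}_h}_2,{T_h}_2)\in\mathbf{X}_h\times M_h\times Y_h$ be a second solution of \eqref{eq:modelweak_discrete} and set $\mathbf{u}_h:={\mathbf{u}_h}_1-{\mathbf{u}_h}_2$, $\mathsf{p}_h:={\mathsf{p}_h}_1-{\mathsf{p}_h}_2$, $T_h:={T_h}_1-{T_h}_2$. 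Subtracting the respective discrete equations, and using the linearity of $a_{SN}$ and $a_{ST}$ in their first argument, we rewrite the differences via
\begin{align*}
 a_{SN}({\mathbf{u}_h}_1;{\mathbf{u}_h}_1,\mathbf{v}_h)-a_{SN}({\mathbf{u}_h}_2;{\mathbf{u}_h}_2,\mathbf{v}_h)
 & =a_{SN}({\mathbf{u}_h}_2;\mathbf{u}_h,\mathbf{v}_h)+a_{SN}(\mathbf{u}_h;{\mathbf{u}_h}_1,\mathbf{v}_h),\\
 a_{ST}({\mathbf{u}_h}_1;{T_h}_1,S_h)-a_{ST}({\mathbf{u}_h}_2;{T_h}_2,S_h)
 & =a_{ST}({\mathbf{u}_h}_2;T_h,S_h)+a_{ST}(\mathbf{u}_h;{T_h}_1,S_h),
\end{align*}
and split the nonlinear diffusion coefficients by adding and subtracting $\kappa({T_h}_2)\nabla {T_h}_1$ and $\nu({T_h}_2)\nabla {\mathbf{u}_h}_1$, respectively.

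Next, I would estimate $\|\nabla T_h\|_{\mathbf{L}^2(\Omega)}$ from the heat equation. Testing with $S_h=T_h$ annihilates $a_{ST}({\mathbf{u}_h}_2;T_h,T_h)=0$ by the definition \eqref{eq:discrete_ST}, leaving
\begin{equation*}
\int_\Omega \kappa({T_h}_2)|\nabla T_h|^2\mathrm{d}x+a_{ST}(\mathbf{u}_h;{T_h}_1,T_h)=\int_\Omega(\kappa({T_h}_2)-\kappa({T_h}_1))\nabla {T_h}_1\cdot\nabla T_h\,\mathrm{d}x.
\end{equation*}
The term $a_{ST}(\mathbf{u}_h;{T_h}_1,T_h)$ is controlled by H\"older and $\mathbf{H}^1_0(\Omega)\hookrightarrow\mathbf{L}^4(\Omega)$, yielding the bound $\mathcal{C}^2_{4\hookrightarrow 2}\|\nabla\mathbf{u}_h\|_{\mathbf{L}^2(\Omega)}\|\nabla {T_h}_1\|_{\mathbf{L}^2(\Omega)}\|\nabla T_h\|_{\mathbf{L}^2(\Omega)}$, while the right-hand side is bounded by $\mathcal{L}_\kappa \mathcal{C}_{6\hookrightarrow 2}\|\nabla {T_h}_1\|_{\mathbf{L}^3(\Omega)}\|\nabla T_h\|^2_{\mathbf{L}^2(\Omega)}$. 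Invoking the discrete stability bound $\|\nabla{T_h}_1\|_{\mathbf{L}^2(\Omega)}\le\kappa_{-}^{-1}\|g\|_{H^{-1}(\Omega)}$ from Theorem \ref{th:existence_coupled_problem_discrete} and rearranging, under assumption \eqref{eq:small_data_coupled_discrete_2} I obtain the analogue of \eqref{eq:bound_T}:
\begin{equation*}
\|\nabla T_h\|_{\mathbf{L}^2(\Omega)}\le\bigl(\kappa_{-}-\mathcal{L}_\kappa\mathcal{C}_{6\hookrightarrow 2}\|\nabla {T_h}_1\|_{\mathbf{L}^3(\Omega)}\bigr)^{-1}\kappa_{-}^{-1}\mathcal{C}^2_{4\hookrightarrow 2}\|\nabla\mathbf{u}_h\|_{\mathbf{L}^2(\Omega)}\|g\|_{H^{-1}(\Omega)}.
\end{equation*}

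For the momentum equation, I would test with $\mathbf{v}_h=\mathbf{u}_h$ and $\mathsf{q}_h=\mathsf{p}_h$ (so that $b(\mathbf{u}_h,\mathsf{p}_h)$ cancels). The term $a_{SN}({\mathbf{u}_h}_2;\mathbf{u}_h,\mathbf{u}_h)$ vanishes by \eqref{eq:discrete_SN}, and the Forchheimer difference $a_F({\mathbf{u}_h}_1;{\mathbf{u}_h}_1,\mathbf{u}_h)-a_F({\mathbf{u}_h}_2;{\mathbf{u}_h}_2,\mathbf{u}_h)\ge 0$ by the monotonicity of $\mathbf{x}\mapsto|\mathbf{x}|^{s-2}\mathbf{x}$ for $s\ge 2$, so it can be dropped. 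What remains is
\begin{equation*}
\nu_{-}\|\nabla\mathbf{u}_h\|^2_{\mathbf{L}^2(\Omega)}\le|a_{SN}(\mathbf{u}_h;{\mathbf{u}_h}_1,\mathbf{u}_h)|+\mathcal{L}_\nu\mathcal{C}_{6\hookrightarrow 2}\|\nabla T_h\|_{\mathbf{L}^2(\Omega)}\|\nabla{\mathbf{u}_h}_1\|_{\mathbf{L}^3(\Omega)}\|\nabla\mathbf{u}_h\|_{\mathbf{L}^2(\Omega)},
\end{equation*}
where the first term is bounded via \eqref{eq:estimates_navier} (which trivially inherits to $a_{SN}$) combined with the stability bound $\|\nabla{\mathbf{u}_h}_1\|_{\mathbf{L}^2(\Omega)}\le\nu_{-}^{-1}\|\mathbf{f}\|_{\mathbf{H}^{-1}(\Omega)}$. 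Substituting the previous estimate for $\|\nabla T_h\|_{\mathbf{L}^2(\Omega)}$ produces exactly the coefficient appearing in \eqref{eq:small_data_coupled_discrete}, so the smallness hypothesis forces $\mathbf{u}_h=\mathbf{0}$. Returning to the heat identity gives $T_h=0$, and finally the discrete inf-sup condition \eqref{eq:discrete_infsup} delivers $\mathsf{p}_h=0$.

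The proof is essentially a bookkeeping exercise: no new ideas beyond those in Theorem \ref{thm:uniq_coupled_3D} are required. The only step that requires any care is verifying that the skew-symmetric forms $a_{SN}$ and $a_{ST}$ satisfy (i) the antisymmetry $a_{SN}(\mathbf{w}_h;\mathbf{v}_h,\mathbf{v}_h)=0$ and $a_{ST}(\mathbf{w}_h;S_h,S_h)=0$ used above, and (ii) the same trilinear continuity bounds as $a_N$ and the convective heat form; both follow directly from their definitions \eqref{eq:discrete_SN}--\eqref{eq:discrete_ST} and \eqref{eq:estimates_navier}. The discrete stability estimates \eqref{eq:estimate_grad_u_h}--\eqref{eq:estimate_T_h} then play the role that Proposition \ref{pro:stability_bounds} played in the continuous proof.
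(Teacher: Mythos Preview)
Your proposal is correct and follows exactly the route the paper indicates: it is the discrete analogue of the proof of Theorem \ref{thm:uniq_coupled_3D}, with the skew-symmetrized forms $a_{SN}$ and $a_{ST}$ supplying the cancellations that $\mathbf{u}\in\mathbf{V}(\Omega)$ supplied continuously, the monotonicity of the Forchheimer term, and the discrete stability bounds \eqref{eq:estimate_grad_u_h}--\eqref{eq:estimate_T_h} and inf-sup condition \eqref{eq:discrete_infsup} replacing their continuous counterparts. The paper's own proof merely sketches this (``proceed similarly to the proof of Theorem \ref{thm:uniq_coupled_3D} with special considerations on the bilinear forms $a_{SN}$ and $a_{ST}$''), so your write-up is in fact more detailed than what appears there.
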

\begin{proof}
Let $({\mathbf{u}_h}_2,{\mathsf{p}_h}_2,{T_h}_2) \in \mathbf{X}_h \times M_h \times Y_h$ be another solution to problem \eqref{eq:modelweak_discrete} and define $\mathbf{u}_h := {\mathbf{u}_h}_1-{\mathbf{u}_h}_2 \in \mathbf{X}_h$, $\mathsf{p}_h := {\mathsf{p}_h}_1 - {\mathsf{p}_h}_2 \in M_h$, and $T_h := {T_h}_1 -{T_h}_2 \in Y_h$. We proceed similarly to the proof of Theorem \ref{thm:uniq_coupled_3D} with special considerations on the bilinear forms $a_{SN}$ and $a_{ST}$. This concludes the proof.
\end{proof}

As in the continuous case, the regularity requirements for the solution in dimension 2 can be relaxed.

\begin{theorem}[uniqueness of solutions]\label{thm:uniq_coupled_2D_discrete}
Let the assumptions of Theorem \ref{theorem_coupled_existence} hold. Let $d=2$ and assume that problem \eqref{eq:modelweak_discrete} has a solution ${(\mathbf{u}_h}_1,{\mathsf{p}_h}_1,{T_h}_1)\in\mathbf{W}^{1,2+\varepsilon}(\Omega) \cap \mathbf{X}_h \times M_h \times W^{1,2+\varepsilon}(\Omega) \cap Y_h$ for some $\varepsilon > 0$ such that
\begin{align}
	\label{eq:small_data_coupled_discrete_2D}
	\frac{\mathcal{C}_N}{\nu_{-}^2}\|\mathbf{f}\|_{\mathbf{H}^{-1}(\Omega)} + \frac{\mathcal{L}_{\nu}\mathcal{C}_{\varepsilon}\mathcal{C}^2_{4\hookrightarrow 2}\|g\|_{H^{-1}(\Omega)}\| \nabla {\mathbf{u}_h}_1 \|_{\mathbf{L}^{2+\varepsilon}(\Omega)}}{\nu_{-}\kappa_{-}\left( \kappa_{-} - \mathcal{L}_{\kappa}\mathcal{C}_{\varepsilon} \| \nabla {T_h}_1 \|_{{\mathbf{L}}^{2+\varepsilon}(\Omega)} \right)}  & < 1,
	\\
	\label{eq:small_data_coupled_discrete_2_2D}
	\mathcal{L}_{\kappa}\mathcal{C}_{\varepsilon}\| \nabla {T_h}_1 \|_{{\mathbf{L}}^{2+\varepsilon}(\Omega)}  & <  \kappa_{-}.
\end{align}
Then, \eqref{eq:modelweak_discrete} has no other solution $({\mathbf{u}_h}_2,{\mathsf{p}_h}_2,{T_h}_2)\in \mathbf{X}_h \times M_h \times Y_h$. Here, $\mathcal{C}_{\varepsilon}$ and $\mathcal{C}_{4 \hookrightarrow 2}$ denote the best constants in the embeddings $H^1_0(\Omega) \hookrightarrow L^{2(2+\varepsilon)/\varepsilon}(\Omega)$ and $\mathbf{H}^1_0(\Omega) \hookrightarrow \mathbf{L}^4(\Omega)$, respectively, and $\mathcal{L}_{\nu}$ and $\mathcal{L}_{\kappa}$ are the Lipschitz constants of $\nu$ and $\kappa$, respectively.
\end{theorem}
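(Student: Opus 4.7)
The plan is to adapt the proof of Theorem~\ref{thm:uniq_coupled_3D_discrete}, replacing the three-dimensional Sobolev embedding $H^1_0(\Omega)\hookrightarrow L^6(\Omega)$ with the two-dimensional one $H^1_0(\Omega)\hookrightarrow L^{2(2+\varepsilon)/\varepsilon}(\Omega)$, whose best constant is $\mathcal{C}_\varepsilon$. The exponents $2+\varepsilon$, $2$, and $2(2+\varepsilon)/\varepsilon$ form a H\"older triple, so the cross terms in the temperature and momentum error equations can be estimated in exactly the same pattern used in the three-dimensional case, only with the weaker integrability requirement $W^{1,2+\varepsilon}(\Omega)$ on the reference solution $({\mathbf{u}_h}_1,{T_h}_1)$.

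First I would let $({\mathbf{u}_h}_2,{\mathsf{p}_h}_2,{T_h}_2)\in\mathbf{X}_h\times M_h\times Y_h$ be a second discrete solution, set $\mathbf{u}_h:={\mathbf{u}_h}_1-{\mathbf{u}_h}_2$, $\mathsf{p}_h:={\mathsf{p}_h}_1-{\mathsf{p}_h}_2$, and $T_h:={T_h}_1-{T_h}_2$, and write out the corresponding discrete error identities using the forms $\mathfrak{a}$, $\mathfrak{b}$, $a_{SN}$, $a_{ST}$, and $a_F$. The key antisymmetry identities $a_{SN}({\mathbf{u}_h}_2;\mathbf{u}_h,\mathbf{u}_h)=0$ and $a_{ST}({\mathbf{u}_h}_2;T_h,T_h)=0$ hold by construction of the discretized convective terms, and they play the role that $\mathbf{u}_2\in\mathbf{V}(\Omega)$ played in the continuous counterpart.

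Next I would test the temperature error equation with $S_h=T_h$, control the Lipschitz term $\int_\Omega(\kappa({T_h}_2)-\kappa({T_h}_1))\nabla{T_h}_1\cdot\nabla T_h\,\mathrm{d}x$ by H\"older in the triple $(2+\varepsilon,2,2(2+\varepsilon)/\varepsilon)$ together with the embedding constant $\mathcal{C}_\varepsilon$, and bound the remaining cross convective contribution $\int_\Omega(\mathbf{u}_h\cdot\nabla{T_h}_1) T_h\,\mathrm{d}x$ using H\"older, the embedding $\mathbf{H}^1_0(\Omega)\hookrightarrow\mathbf{L}^4(\Omega)$, and the discrete stability bound $\|\nabla{T_h}_1\|_{\mathbf{L}^2(\Omega)}\leq\kappa_-^{-1}\|g\|_{H^{-1}(\Omega)}$ from Theorem~\ref{th:existence_coupled_problem_discrete}. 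Invoking \eqref{eq:small_data_coupled_discrete_2_2D} to absorb the Lipschitz term on the left yields the two-dimensional discrete analogue of \eqref{eq:bound_T}:
\begin{equation*}
\|\nabla T_h\|_{\mathbf{L}^2(\Omega)}
\leq
\bigl(\kappa_- - \mathcal{L}_\kappa\mathcal{C}_\varepsilon\|\nabla{T_h}_1\|_{\mathbf{L}^{2+\varepsilon}(\Omega)}\bigr)^{-1}\kappa_-^{-1}\mathcal{C}^2_{4\hookrightarrow 2}\|g\|_{H^{-1}(\Omega)}\|\nabla\mathbf{u}_h\|_{\mathbf{L}^2(\Omega)}.
\end{equation*}
I would then test the momentum error equation with $\mathbf{v}_h=\mathbf{u}_h$, use the antisymmetry of $a_{SN}$ to cancel $a_{SN}({\mathbf{u}_h}_2;\mathbf{u}_h,\mathbf{u}_h)$, discard the monotone Forchheimer difference, and control the cross trilinear term $a_{SN}(\mathbf{u}_h;{\mathbf{u}_h}_1,\mathbf{u}_h)$ via the continuity bound \eqref{eq:estimates_navier} together with \eqref{eq:estimate_grad_u_h}, obtaining the contribution $\mathcal{C}_N\nu_-^{-1}\|\mathbf{f}\|_{\mathbf{H}^{-1}(\Omega)}\|\nabla\mathbf{u}_h\|^2_{\mathbf{L}^2(\Omega)}$. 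The viscosity cross-term is treated with the same H\"older triple and gives $\mathcal{L}_\nu\mathcal{C}_\varepsilon\|\nabla T_h\|_{\mathbf{L}^2(\Omega)}\|\nabla{\mathbf{u}_h}_1\|_{\mathbf{L}^{2+\varepsilon}(\Omega)}\|\nabla\mathbf{u}_h\|_{\mathbf{L}^2(\Omega)}$. Substituting the bound for $\|\nabla T_h\|_{\mathbf{L}^2(\Omega)}$ and invoking the smallness condition \eqref{eq:small_data_coupled_discrete_2D} produces $\|\nabla\mathbf{u}_h\|^2_{\mathbf{L}^2(\Omega)}(1-C)\leq 0$ with $C<1$, hence $\mathbf{u}_h=\mathbf{0}$. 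Going back to the temperature estimate forces $T_h=0$, and the discrete inf-sup condition \eqref{eq:discrete_infsup} finally yields $\mathsf{p}_h=0$.

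The main obstacle I anticipate is ensuring that the antisymmetrized form $a_{SN}$, which differs from $a_N$ on $\mathbf{X}_h$ because discrete velocities are not pointwise divergence-free, still satisfies the continuity estimate \eqref{eq:estimates_navier} with the same constant $\mathcal{C}_N$; this is straightforward since $a_{SN}$ is a half-sum of two copies of $a_N$ each bounded individually by \eqref{eq:estimates_navier}, but it is the one place where the discrete argument departs notably from the continuous one and must be checked explicitly. A secondary check is that \eqref{eq:small_data_coupled_discrete_2D} is sharp enough to absorb both the convective trilinear contribution and the viscosity cross-term simultaneously, which follows from the same algebra as in Theorem~\ref{thm:uniq_coupled_3D_discrete}.
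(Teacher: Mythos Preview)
Your proposal is correct and follows essentially the same approach as the paper. The paper does not write out an explicit proof for this theorem; it is understood to follow from the proof of Theorem~\ref{thm:uniq_coupled_3D_discrete} (itself a discrete adaptation of Theorem~\ref{thm:uniq_coupled_3D}) by replacing the embedding $H^1_0(\Omega)\hookrightarrow L^6(\Omega)$ with $H^1_0(\Omega)\hookrightarrow L^{2(2+\varepsilon)/\varepsilon}(\Omega)$, exactly as you describe, and your treatment of the antisymmetrized forms $a_{SN}$ and $a_{ST}$ and of the continuity constant $\mathcal{C}_N$ for $a_{SN}$ is the correct discrete analogue.
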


\subsection{Convergence}
We present the following convergence result. The most important feature of this result is that \emph{it requires no regularity properties of solutions other than those required for the well-posedness of the problem, no additional smallness assumptions, and no further regularity properties of $\Omega$ beyond a Lipschitz property.}

\begin{theorem}[convergence result]\label{thm:convergence_coupled_problem}
In the framework of Theorem \ref{th:existence_coupled_problem}, let $h > 0$, and let $(\mathbf{u}_h,\mathsf{p}_h,T_h)$ be a solution of the discrete problem \eqref{eq:modelweak_discrete}. Then, there exists a nonrelabeled subsequence of $\{(\mathbf{u}_h,\mathsf{p}_h,T_h)\}_{h>0}$ such that
$
 \mathbf{u}_h \rightharpoonup \mathbf{u},
$
$
\mathsf{p}_h \rightharpoonup \mathsf{p},
$
and
$T_h \rightharpoonup T
$
in $\mathbf{H}^1_0(\Omega)$, $L^2_0(\Omega)$, and $H^1_0(\Omega)$, respectively, as $h \rightarrow 0$. Moreover, $(\mathbf{u},\mathsf{p},T)$ solves \eqref{eq:modelweak}.
\end{theorem}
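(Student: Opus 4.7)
\emph{Plan of proof.} The strategy is to emulate the Galerkin limit-passage argument used in the proof of Theorem \ref{theorem_coupled_existence}, replacing the density of the Galerkin basis by the finite element density properties \eqref{eq:density_1}--\eqref{eq:density_2}. The uniform discrete bounds \eqref{eq:estimate_grad_u_h}--\eqref{eq:estimate_T_h} make the family $\{(\mathbf{u}_h,\mathsf{p}_h,T_h)\}_{h>0}$ bounded in $\mathbf{H}^1_0(\Omega) \times L^2_0(\Omega) \times H^1_0(\Omega)$. By reflexivity, a (nonrelabeled) subsequence converges weakly to some $(\mathbf{u},\mathsf{p},T)$ in that product space, and the compact embedding \cite[Theorem 6.3, Part I]{MR2424078} upgrades this to $\mathbf{u}_h \to \mathbf{u}$ in $\mathbf{L}^q(\Omega)$ and $T_h \to T$ in $L^q(\Omega)$ for every admissible $q$ (so in particular $q<6$). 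All that remains is to identify $(\mathbf{u},\mathsf{p},T)$ as a solution of \eqref{eq:modelweak}.

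First I would show that $\mathbf{u} \in \mathbf{V}(\Omega)$. Given $\mathsf{q}\in L_0^2(\Omega)$, I take a sequence $\mathsf{q}_h\in M_h$ with $\mathsf{q}_h\to \mathsf{q}$ in $L^2(\Omega)$ (which exists because $M_h$ consists of continuous piecewise polynomials and is dense in $L^2_0(\Omega)$). Passing to the limit in $b(\mathbf{u}_h,\mathsf{q}_h)=0$ using weak-strong convergence gives $b(\mathbf{u},\mathsf{q})=0$. Next, for arbitrary test functions $(\mathbf{v},\mathsf{q},S)\in \mathbf{H}_0^1(\Omega) \times L_0^2(\Omega) \times H_0^1(\Omega)$ I fix discrete approximations $\mathbf{v}_h=\mathbf{\Pi}_h\mathbf{v}$ and $S_h=\Pi_h S$ provided by \eqref{eq:density_2} (and a suitable $\mathsf{q}_h$ as above) and pass to the limit term by term in the identities \eqref{eq:modelweak_discrete}:
\begin{itemize}
\item[(i)] The linear terms in $\mathfrak{a}$, $b$ and $\mathfrak{b}$ converge by combining weak convergence of gradients/pressure with strong $H^1$ convergence of the test functions, after using the splitting trick described below to deal with $\nu(T_h)$ and $\kappa(T_h)$.
\item[(ii)] The convergence $a_F(\mathbf{u}_h;\mathbf{u}_h,\mathbf{v}_h)\to a_F(\mathbf{u};\mathbf{u},\mathbf{v})$ follows from the Lipschitz-type bound \eqref{eq:estimates_a_weakly_cont} together with the strong $\mathbf{L}^\mu$ convergence of $\mathbf{u}_h$, exactly as in \emph{Step 2} of Theorem \ref{th:BDF_exist}.
\item[(iii)] For $a_{SN}(\mathbf{u}_h;\mathbf{u}_h,\mathbf{v}_h)$ I expand by \eqref{eq:discrete_SN} and use strong convergence $\mathbf{u}_h\to \mathbf{u}$ in $\mathbf{L}^4(\Omega)$ paired with weak convergence of $\nabla\mathbf{u}_h$ to conclude convergence to $\tfrac12[a_N(\mathbf{u};\mathbf{u},\mathbf{v})-a_N(\mathbf{u};\mathbf{v},\mathbf{u})]$; since $\mathbf{u}\in \mathbf{V}(\Omega)$, the antisymmetry property \eqref{eq:properties_navier} collapses this to $a_N(\mathbf{u};\mathbf{u},\mathbf{v})$, which is the term appearing in \eqref{eq:modelweak}. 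An analogous integration-by-parts argument disposes of $a_{ST}$.
\end{itemize}

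The main obstacle is passing to the limit in the temperature-dependent diffusion terms $\mathfrak{a}(T_h;\mathbf{u}_h,\mathbf{v}_h)$ and $\mathfrak{b}(T_h;T_h,S_h)$, where only weak convergence of the gradients is available while the nonlinearity sits on a strongly converging variable. I handle this by the standard splitting
\begin{equation*}
\int_\Omega \nu(T_h)\nabla\mathbf{u}_h\cdot\nabla\mathbf{v}_h\,\mathrm{d}x
=\int_\Omega \nu(T_h)\nabla\mathbf{u}_h\cdot\nabla\mathbf{v}\,\mathrm{d}x
+\int_\Omega \nu(T_h)\nabla\mathbf{u}_h\cdot(\nabla\mathbf{v}_h-\nabla\mathbf{v})\,\mathrm{d}x,
\end{equation*}
where the second summand vanishes since $\nu$ is bounded, $\nabla\mathbf{u}_h$ is bounded in $\mathbf{L}^2$, and $\nabla\mathbf{v}_h\to\nabla\mathbf{v}$ in $\mathbf{L}^2$; for the first summand, Lipschitz continuity of $\nu$ combined with the (subsequential) pointwise a.e.\ convergence $T_h\to T$ and the Lebesgue dominated convergence theorem yields $\nu(T_h)\nabla\mathbf{v}\to \nu(T)\nabla\mathbf{v}$ in $\mathbf{L}^2(\Omega)$, and then $\nabla\mathbf{u}_h\rightharpoonup\nabla\mathbf{u}$ closes the argument. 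The same device, with $\kappa$ and $S$ in place of $\nu$ and $\mathbf{v}$, treats $\mathfrak{b}$. Combining (i)--(iii) with this device shows that $(\mathbf{u},\mathsf{p},T)$ satisfies \eqref{eq:modelweak}, which completes the proof.
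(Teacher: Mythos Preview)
Your proposal is correct and follows essentially the same route as the paper's proof: uniform discrete bounds $\Rightarrow$ weak subsequential limits, compact Sobolev embeddings for strong $L^q$ convergence, the dominated-convergence splitting to handle $\nu(T_h)\nabla\mathbf{v}$ and $\kappa(T_h)\nabla S$, and term-by-term limit passage in $a_F$, $a_{SN}$, $a_{ST}$ using the density properties \eqref{eq:density_2}. Your choice to establish $\mathbf{u}\in\mathbf{V}(\Omega)$ \emph{before} collapsing $a_{SN}(\mathbf{u}_h;\mathbf{u}_h,\mathbf{v}_h)$ to $a_N(\mathbf{u};\mathbf{u},\mathbf{v})$ via \eqref{eq:properties_navier} is logically cleaner than the paper's presentation, which uses that identity implicitly and only records the divergence-free constraint at the end.
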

\begin{proof}
For $h>0$, the existence of a discrete solution $(\mathbf{u}_h,\mathsf{p}_h,T_h) \in \mathbf{X}_h \times M_h \times Y_h$ is guaranteed by Theorem \ref{th:existence_coupled_problem_discrete}. We note that, given the stability bounds in Theorem \ref{th:existence_coupled_problem_discrete}, the sequence $\{ (\mathbf{u}_h, \mathsf{p}_h, T_h) \}_{h>0}$ is uniformly bounded in $\mathbf{H}^1_0(\Omega) \times L^2_0(\Omega) \times H^1_0(\Omega)$ with respect to the discretization parameter $h$. We can therefore deduce the existence of a nonrelabeled subsequence $\{ (\mathbf{u}_h,\mathsf{p}_{h},T_{h}) \}_{h>0}$ such that
\begin{equation}
 (\mathbf{u}_h,\mathsf{p}_{h},T_{h}) \rightharpoonup (\mathbf{u},\mathsf{p},T), \qquad h \rightarrow 0,
\end{equation}
in $ \mathbf{H}_0^1(\Omega) \times L^2(\Omega) \times H^1_0(\Omega)$. In the following, we show that $(\mathbf{u},\mathsf{p},T)\in \mathbf{H}_0^1(\Omega) \times L^2_0(\Omega) \times H^1_0(\Omega)$ solves the nonlinear problem \eqref{eq:modelweak}. To do this, we proceed in several steps.

\emph{Step 1.} We show that the pair $(\mathbf{u},\mathsf{p}) \in \mathbf{H}_0^1(\Omega)\times L_0^2(\Omega)$ solves the first two equations in \eqref{eq:modelweak} with temperature $T$. To do this, we let $\mathbf{v} \in \mathbf{H}_{0}^{1}(\Omega)$ be arbitrary and set $\mathbf{v}_h = \mathbf{\Pi}_h \mathbf{v} \in \mathbf{X}_{h}$. A simple calculation for the Forchheimer term shows that
	\begin{multline*}
			\left|
			\int_{\Omega}\left( |\mathbf{u}_h|^{s-2}\mathbf{u}_h\cdot\mathbf{v}_h-|\mathbf{u}|^{s-2}\mathbf{u}\cdot\mathbf{v} \right)\mathrm{d}x \right| \leq\left|\int_{\Omega} |\mathbf{u}_h |^{s-2}\mathbf{u}_h \cdot\left(\mathbf{v}_h-\mathbf{v}\right)\mathrm{d}x
			\right|
			\\
			\quad +
			\left|
			\int_{\Omega} \left(  |\mathbf{u}_h|^{s-2}\mathbf{u}_h-|\mathbf{u}|^{s-2}\mathbf{u} \right)\cdot\mathbf{v} \mathrm{d}x
			\right|=:\text{I}_h +\text{II}_h.
	\end{multline*}
Given \eqref{eq:estimates_a_weakly_cont} and the fact that the embedding $\mathbf{H}^1_0(\Omega) \hookrightarrow \mathbf{L}^{\iota}(\Omega)$ is compact for $\iota < \infty$ when $d=2$ and $\iota < 6$ when $d=3$, we can conclude that $\text{II}_h\to 0$ as $h \rightarrow 0$. To analyze the term $\text{I}_h$, we first invoke \eqref{eq:estimate_for_Forchheimer_2} to obtain
\begin{equation*}
\left|\int_{\Omega} |\mathbf{u}_h |^{s-2}\mathbf{u}_h \cdot\left(\mathbf{v}_h-\mathbf{v}\right) \mathrm{d}x \right|
\lesssim  \|\nabla (\mathbf{v} - \mathbf{v}_h)\|_{\mathbf{L}^2(\Omega)} \| \nabla \mathbf{u}_h \|^{s-1}_{\mathbf{L}^2(\Omega)}.
\end{equation*}
Since $\{ \mathbf{u}_h \}_{h >0}$ is uniformly bounded in $\mathbf{H}_0^1(\Omega)$, a result in \eqref{eq:density_2} allows us to conclude.

In the following, we
prove that $a_{SN}(\mathbf{u}_h;\mathbf{u}_h,\mathbf{v}_h) \rightarrow \int_\Omega (\mathbf{u} \cdot \nabla )\mathbf{u}\cdot\mathbf{v}\mathrm{d}x$ as $h \rightarrow 0$. To do so, we use the definition of $a_{SN}$ from \eqref{eq:discrete_SN} and then a property from \eqref{eq:properties_navier} to obtain
\begin{multline*}
\left|\int_{\Omega}(\mathbf{u} \cdot \nabla)\mathbf{u}\cdot\mathbf{v}\mathrm{d}x
-
a_{SN}(\mathbf{u}_h;\mathbf{u}_h,\mathbf{v}_h)\right|
\leq
\frac{1}{2}
\left|\int_{\Omega}\left((\mathbf{u} \cdot \nabla )\mathbf{u}\cdot\mathbf{v}-(\mathbf{u}_h \cdot \nabla )\mathbf{u}_h\cdot\mathbf{v}_h\right) \mathrm{d}x\right|
\\
+
\frac{1}{2}
\left|\int_{\Omega}\left((\mathbf{u}_h \cdot \nabla )\mathbf{v}_h\cdot \mathbf{u}_h-(\mathbf{u} \cdot \nabla )\mathbf{v}\cdot \mathbf{u}\right) \mathrm{d}x\right| =: \text{III}_h + \text{IV}_h.
\end{multline*}
To control $\text{III}_h$, we use $\mathbf{u}_h \rightharpoonup \mathbf{u}$ in $\mathbf{H}^1_0(\Omega)$ as $k \uparrow \infty$, the fact that $\mathbf{u}, \mathbf{v} \in \mathbf{L}^4(\Omega)$, the compact embedding $\mathbf{H}_0^1(\Omega) \hookrightarrow \mathbf{L}^q(\Omega)$ for $q<\infty$ when $d=2$ and $q<6$ when $d=3$, the fact that $\{ \mathbf{u}_h \}_{h>0}$ is uniformly bounded in $\mathbf{H}_0^1(\Omega)$, and \eqref{eq:density_2} to obtain
\begin{multline*}
	2 \text{III}_{h}
	\leq
	\left|\int_{\Omega}\left(\mathbf{u} \cdot \nabla\right)(\mathbf{u}- \mathbf{u}_h)\cdot\mathbf{v} \mathrm{d}x\right|
	+
	\left|\int_{\Omega} ((\mathbf{u} - \mathbf{u}_h) \cdot \nabla) \mathbf{u}_h\cdot\mathbf{v}\mathrm{d}x \right|
	\\
	+\left|\int_{\Omega} (\mathbf{u}_h \cdot \nabla)\mathbf{u}_h\cdot(\mathbf{v}-\mathbf{v}_h)\mathrm{d}x \right| \rightarrow 0,
	\qquad h \rightarrow 0.
\end{multline*}
We apply similar arguments to control the term $\text{IV}_{h}$:
\begin{multline*}
2\text{IV}_{h} \leq\left|\int_{\Omega}\left((\mathbf{u}-\mathbf{u}_h) \cdot \nabla\right)\mathbf{v}\cdot\mathbf{u}\mathrm{d}x\right|
+\left|\int_{\Omega} (\mathbf{u}_h \cdot \nabla) \mathbf{v}\cdot(\mathbf{u}-\mathbf{u}_h) \mathrm{d}x\right|
\\
+
\left|\int_{\Omega} (\mathbf{u}_h \cdot \nabla) (\mathbf{v}-\mathbf{v}_h)\cdot\mathbf{u}_h \mathrm{d}x\right| \rightarrow 0,
\qquad
h \rightarrow 0.
\end{multline*}

The convergence of the linear term is trivial: $|\int_{\Omega}(\mathbf{u}\cdot\mathbf{v} - \mathbf{u}_h\cdot\mathbf{v}_h)\mathrm{d}x|\rightarrow 0$ as $h \rightarrow 0$.

We now show that $( \nu(T_h) \nabla \mathbf{u}_h, \nabla \mathbf{v}_h)_{\mathbf{L}^2(\Omega)} \rightarrow ( \nu(T) \nabla \mathbf{u}, \nabla \mathbf{v})_{\mathbf{L}^2(\Omega)}$ as $h \rightarrow 0$. To achieve this, we proceed as follows. First, as in step 3 of the proof of Theorem \ref{th:existence_coupled_problem}, we have that $\nu(T_h) \nabla \mathbf{v} \rightarrow \nu(T) \nabla \mathbf{v}$ in $\mathbf{L}^2(\Omega)$ as $h \rightarrow 0$. This and the weak convergence $\mathbf{u}_h \rightharpoonup \mathbf{u}$ in $\mathbf{H}_0^1(\Omega)$ show that $(\nu(T_h) \nabla \mathbf{u}_h, \nabla \mathbf{v})_{\mathbf{L}^2(\Omega)} \rightarrow ( \nu(T) \nabla \mathbf{u}, \nabla \mathbf{v} )_{\mathbf{L}^2(\Omega)}$ as $h \rightarrow 0$. The fact that $| (\nu(T_h) \nabla \mathbf{u}_h, \nabla (\mathbf{v} - \mathbf{v}_h ))_{\mathbf{L}^2(\Omega)}| \rightarrow 0$ as $h \rightarrow 0$ follows from the boundedness of $\nu$, the uniform boundedness of $\{ \mathbf{u}_h \}_{h > 0}$ in $\mathbf{H}_0^1(\Omega)$ and \eqref{eq:density_2}.

Since $\mathsf{p}_h \rightharpoonup \mathsf{p}$ in $L^2(\Omega)$ as $h \rightarrow 0$, the fact that $\{ \mathsf{p}_h \}_{h>0}$ is uniformly bounded in $L^2(\Omega)$, the bound $\| \operatorname*{div}\mathbf{w} \|_{L^2(\Omega)} \leq \| \nabla \mathbf{w} \|_{\mathbf{L}^2(\Omega)}$, which holds for every $\mathbf{w} \in \mathbf{H}_0^1(\Omega)$, and a result from \eqref{eq:density_2} lead us to the following conclusions:
	\begin{multline*}
		\left|\int_{\Omega}\left(\mathsf{p} \operatorname*{div}\mathbf{v} - \mathsf{p}_h \operatorname*{div}\mathbf{v}_h \right)\mathrm{d}x\right|
		\leq
		\left|\int_{\Omega}(\mathsf{p} - \mathsf{p}_h) \operatorname*{div}\mathbf{v} \mathrm{d}x\right| \\
		+\|\mathsf{p}_h\|_{L^2(\Omega)}\| \nabla (\mathbf{v}-\mathbf{v}_h)\|_{\mathbf{L}^2(\Omega)} \rightarrow 0,
		\qquad
		h \rightarrow 0.
	\end{multline*}

Finally, the fact that $\int_{\Omega}\mathsf{q}\,\mathrm{div }\mathbf{u}\,\mathrm{d}x = 0$ for $\mathsf{q} \in L_0^2(\Omega)$ is trivial.

\emph{Step 3}. We prove that $T$ solves the heat equation in  \eqref{eq:modelweak}. For this purpose, we let $S\in H_0^{1}(\Omega)$ be arbitrary and set $S_h = \Pi_h S \in Y_h$. Next, we write the bound
\begin{equation*}
\begin{aligned}
\mathfrak{I}_h
& :=
\left|
\int_{\Omega}\left(\kappa(T) \nabla T \cdot \nabla S-\kappa\left(T_h\right) \nabla T_h \cdot \nabla S_h\right)\mathrm{d}x
\right|
\\
& \leq
\left|
\int_{\Omega}\left(\kappa(T) \nabla T -\kappa(T_h) \nabla T_h \right)  \cdot \nabla S\mathrm{d}x
\right|
+
\left|\int_{\Omega} \kappa\left(T_h\right) \nabla T_h \cdot \nabla\left(S-S_h\right) \mathrm{d}x\right|.
\end{aligned}
\end{equation*}
The strong convergence $\kappa(T_h) \nabla S \rightarrow \kappa(T) \nabla S$ in $\mathbf{L}^2(\Omega)$ as $h \rightarrow 0$, the weak convergence $T_h \rightharpoonup T$ in $H^1_0(\Omega)$ as $h \rightarrow 0$, the boundedness of $\kappa$, the uniform boundedness of $\{ T_h \}_{h > 0}$ in $H_0^1(\Omega)$, and a result from \eqref{eq:density_2} show that $\mathfrak{I}_h \rightarrow 0$ as $h \rightarrow 0$.

We now prove that $ a_{ST}(\mathbf{u}_h;T_h,S_h) \rightarrow \int_\Omega (\mathbf{u} \cdot \nabla T)S\mathrm{d}x$ as $h \rightarrow 0$. Since $\operatorname*{div}\mathbf{u}=0$ and $T,S \in H^1_0(\Omega)$, a simple calculation shows that
\begin{multline*}
\left|\int_\Omega (\mathbf{u} \cdot \nabla T)S\mathrm{d}x-a_{ST}(\mathbf{u}_h;T_h,S_h)\right|
\leq
\frac{1}{2}
\left|\int_{\Omega}\left((\mathbf{u} \cdot \nabla T)S-(\mathbf{u}_h \cdot \nabla T_h)S_h\right) \mathrm{d}x\right|
\\
+
\frac{1}{2}
\left|\int_{\Omega}\left((\mathbf{u}_h \cdot \nabla S_h)T_h-(\mathbf{u} \cdot \nabla S)T\right) \mathrm{d}x\right| =: \text{V}_{h} + \text{VI}_{h}.
\end{multline*}
To control $\text{V}_h$, we use the strong convergence $\mathbf{u}_h \to \mathbf{u}$ in $\mathbf{L}^4(\Omega)$, the weak convergence $T_h \rightharpoonup T$ in $H^1_0(\Omega)$ as $h \rightarrow 0$, the uniform boundedness of $\{ T_h \}_{h>0}$ and $\{ \mathbf{u}_h \}_{h>0}$ in $H_0^1(\Omega)$ and $\mathbf{H}_0^1(\Omega)$, respectively, and \eqref{eq:density_2}. These arguments show that
\begin{multline*}
2\text{V}_{h}
\leq
\left|\int_{\Omega}\left( (\mathbf{u} -\mathbf{u}_h) \cdot \nabla T\right)S\mathrm{d}x \right|
+
\left|\int_{\Omega} (\mathbf{u}_h \cdot \nabla (T-T_h))S \mathrm{d}x\right|
\\
+
\left|\int_{\Omega} (\mathbf{u}_h \cdot \nabla T_h)(S - S_h) \mathrm{d}x \right| \rightarrow 0,
\qquad
h \rightarrow 0.
\end{multline*}
We apply similar arguments to control the term $\text{VI}_h$:
\begin{multline*}
2\text{VI}_{h}
\leq
\left|\int_{\Omega}\left((\mathbf{u} - \mathbf{u}_h) \cdot \nabla S \right)T \mathrm{d}x\right|
+
\left|\int_{\Omega} (\mathbf{u}_h \cdot \nabla (S-S_h))T \mathrm{d}x\right|
\\
+\left|\int_{\Omega} (\mathbf{u}_h \cdot \nabla S_h)(T - T_h) \mathrm{d}x\right| \rightarrow 0,
\qquad
h \rightarrow 0.
\end{multline*}

We have thus proved that $T$ solves the heat equation in \eqref{eq:modelweak}. With the previous results, we can conclude that $(\mathbf{u},\mathsf{p},T)$ solves \eqref{eq:modelweak}. This concludes the proof.
\end{proof}

\subsection{A quasi-best approximation result}
We derive a quasi-best approximation result for the finite element approximation \eqref{eq:modelweak_discrete} of problem \eqref{eq:modelweak}. For this purpose, we assume suitable smallness conditions and regularity assumptions for the solution; see Theorems \ref{thm:apriori_error_3D} and \ref{thm:apriori_error_2D} below. Since the regularity requirements on the solution in two dimensions are weaker, we first derive the error bounds in three dimensions and write the corresponding error bounds in two dimensions in a separate theorem.

We begin our analysis by introducing the errors $\mathbf{e}_{\mathbf{u}}:=\mathbf{u}-\mathbf{u}_h$, $e_{\mathsf{p}}:=\mathsf{p}-\mathsf{p}_h$, and $e_{T}:=T-T_h$ as well as the quantities
	\begin{align}
		\label{eq:apriori_3D_constants_1}
		\mathfrak{M}(T)
		& := \kappa_{-} - \mathcal{L}_{\kappa}\mathcal{C}_{6 \hookrightarrow 2}\| \nabla T \|_{{\mathbf{L}}^3(\Omega)},
		\\
		\label{eq:apriori_3D_constants_2}
		\mathfrak{N}(\mathbf{f},g,\mathbf{u})
		& :=
		\nu_{-}
		-
		\nu_{-}^{-1}\mathcal{C}_N\|\mathbf{f}\|_{\mathbf{H}^{-1}(\Omega)}
		- \mathcal{C}^2_{2 \hookrightarrow 2} -
		\mathcal{C}^2_{\mu \hookrightarrow 2}\Gamma_{s,\tau}(\mathbf{f})
		-\mathfrak{O}(g,\mathbf{u}),
		\\
		\label{eq:apriori_3D_constants_3}
		\mathfrak{O}(g,\mathbf{u}) & := (\kappa_{-} \mathfrak{M})^{-1}\mathcal{L}_{\nu}\mathcal{C}_{6 \hookrightarrow 2}\mathcal{C}^2_{4\hookrightarrow 2}\|g\|_{H^{-1}(\Omega)}\| \nabla \mathbf{u} \|_{\mathbf{L}^3(\Omega)},
\end{align}
where $\mathcal{C}_{6 \hookrightarrow 2}$, $\mathcal{C}_{4 \hookrightarrow 2}$, $\mathcal{C}_{2 \hookrightarrow 2}$, and $\mathcal{C}_{\mu \hookrightarrow 2}$ are the best constants in $H^1_0(\Omega) \hookrightarrow L^{6}(\Omega)$, $\mathbf{H}^1_0(\Omega) \hookrightarrow \mathbf{L}^{4}(\Omega)$, $\mathbf{H}^1_0(\Omega) \hookrightarrow \mathbf{L}^{2}(\Omega)$, and $\mathbf{H}^1_0(\Omega) \hookrightarrow \mathbf{L}^{\mu}(\Omega)$, respectively. The constants $\nu_{-}$, $\kappa_{-}$, $\mathcal{L}_{\nu}$, and $\mathcal{L}_{\kappa}$ are defined in \S \ref{sec:main_assump}. $\mathcal{C}_N$ is the constant in \eqref{eq:estimates_navier}. $\Gamma_{s,\tau}$ is defined in \eqref{eq:Gamma_st} below. Finally, $\mu$ and $\tau$ are such that $2/\mu + 1/\tau = 1$, where $\tau = q$ for some $q > 1$ in two dimensions and $\tau = 3/2$ ($\mu = 6$) in three dimensions.

We are now ready to state and prove one of the main results of this section.

\begin{theorem}[error bound ($d=3$)]\label{thm:apriori_error_3D}
Let $d=3$, and let the assumptions of Theorems \ref{theorem_coupled_existence} and \ref{thm:uniq_coupled_3D_discrete} hold. If $\mathfrak{M}(T)>0$ and $\mathfrak{N}(\mathbf{f},g,\mathbf{u})>0$, then
we have the bound
\begin{multline}\label{eq:coupled_problem_best_approx}
\|\nabla \mathbf{e}_{\mathbf{u}}\|_{\mathbf{L}^2(\Omega)}
+
\|\nabla e_{T}\|_{\mathbf{L}^2(\Omega)}
+
\|e_{\mathsf{p}}\|_{L^2(\Omega)}\lesssim
\inf_{\mathbf{w}_h\in \mathbf{V}_h}\|\nabla(\mathbf{u}-\mathbf{w}_h)\|_{\mathbf{L}^2(\Omega)}
\\
+
\inf_{R_h\in Y_h} \|\nabla (T-R_h)\|_{\mathbf{L}^2(\Omega)}
+
\inf_{\mathsf{q}_h\in M_h}\|\mathsf{p}-\mathsf{q}_h\|_{L^2(\Omega)},
\end{multline}
with a hidden constant that is independent of $h$.
\end{theorem}

\begin{remark} \label{remark:Assumptions_smallness}[The assumptions $\mathfrak{M}(T)>0$ and $\mathfrak{N}(\mathbf{f},g,\mathbf{u})>0$ in three dimensions]
We note that the inequalities $\mathfrak{M}(T)>0$ and $\mathfrak{N}(\mathbf{f},g,\mathbf{u})>0$ can be rewritten as
\begin{equation*}
\frac{\mathcal{C}^2_{2 \hookrightarrow 2}}{\nu_{-}}+\frac{\mathcal{C}^2_{\mu \hookrightarrow 2}\Gamma_{s,\tau}(\mathbf{f})}{\nu_{-}}+
\frac{\mathcal{C}_N}{\nu_{-}^2}\|\mathbf{f}\|_{\mathbf{H}^{-1}(\Omega)} + \frac{\mathcal{L}_{\nu}\mathcal{C}_{6 \hookrightarrow 2}\mathcal{C}^2_{4\hookrightarrow 2}\|g\|_{H^{-1}(\Omega)}\| \nabla \mathbf{u} \|_{\mathbf{L}^3(\Omega)}}{\nu_{-}\kappa_{-}\left( \kappa_{-} - \mathcal{L}_{\kappa}\mathcal{C}_{6 \hookrightarrow 2} \| \nabla T \|_{{\mathbf{L}}^3(\Omega)} \right)}  < 1
\end{equation*}
and
$
\mathcal{L}_{\kappa}\mathcal{C}_{6 \hookrightarrow 2}\| \nabla T \|_{{\mathbf{L}}^3(\Omega)}  <  \kappa_{-},
$
respectively. This shows in particular that conditions  \eqref{eq:small_data_coupled} and \eqref{eq:small_data_coupled_2}, which guarantee the uniqueness of the coupled problem, are fulfilled.
\end{remark}

\begin{proof}

We divide the proof into four steps.

Step 1. \emph{A priori bound for the temperature error:} We control $\|\nabla e_{T}\|_{\mathbf{L}^2(\Omega)}$. For this purpose, we set $S = S_h\in Y_h \subset H_0^1(\Omega)$ in the temperature equation of the system \eqref{eq:modelweak} and subtract the third equation of the discrete system \eqref{eq:modelweak_discrete} from it to obtain
\begin{multline}\label{eq:error_temperature_coupled}
	\int_\Omega \left( \kappa(T)\nabla T - \kappa(T_h)\nabla T_h \right)\cdot\nabla S_h \mathrm{d}x+ \frac{1}{2}\int_\Omega \left( (\mathbf{u} \cdot \nabla T)  - (\mathbf{u}_h \cdot \nabla T_h) \right) S_h\mathrm{d}x\\
	+\frac{1}{2}\int_\Omega  \left((\mathbf{u}_h \cdot \nabla S_h)T_h  - (\mathbf{u} \cdot \nabla S_h)  T\right)\mathrm{d}x=0
	\quad
	\forall S_h \in Y_h,
\end{multline}
where we have used that $\mathbf{u} \in \mathbf{V}(\Omega)$ and the definition of $a_{ST}$ given in \eqref{eq:discrete_ST}. Now, let $R_h$ be an arbitrary element in $Y_h$. We rewrite the left-hand side of \eqref{eq:error_temperature_coupled} as follows:
\begin{multline}\label{eq:temp_reform}
	\int_\Omega \left( \kappa(T)\nabla T - \kappa(T_h)\nabla T_h \right)\cdot\nabla S_h \mathrm{d}x = \int_\Omega\left(\kappa(T)-\kappa(T_h)\right) \nabla T\cdot \nabla S_h \mathrm{d}x\\
	+\int_\Omega \kappa(T_h)\nabla ( T - R_h) \cdot\nabla S_h\mathrm{d}x+\int_\Omega \kappa(T_h)\nabla (R_h-T_h) \cdot\nabla S_h \mathrm{d}x
	\quad
	\forall S_h \in Y_h.
\end{multline}

In the next step, we set $S_h = R_h-T_h$ and replace \eqref{eq:temp_reform} in \eqref{eq:error_temperature_coupled} to obtain 
\begin{multline}\label{eq:error_temperature_coupled_reform}
	\int_\Omega \kappa(T_h)|\nabla ( R_h-T_h)|^2 \mathrm{d}x=-\int_\Omega\left(\kappa(T)-\kappa(T_h)\right) \nabla T\cdot \nabla (R_h-T_h)\mathrm{d}x
	\\-\int_\Omega \kappa(T_h)\nabla (T-R_h) \cdot\nabla (R_h-T_h)\mathrm{d}x  -\frac{1}{2}\int_\Omega \mathbf{u}\cdot \nabla (T-R_h) (R_h-T_h)\mathrm{d}x
	\\ -\frac{1}{2}\int_\Omega \mathbf{u} \cdot \nabla (R_h-T_h)(R_h-T_h)\mathrm{d}x
	-\frac{1}{2}\int_\Omega \left(\mathbf{u}-\mathbf{u}_h\right) \cdot \nabla T_h(R_h-T_h)\mathrm{d}x
	\\
	-
	\frac{1}{2}\int_\Omega \left( \mathbf{u}_h-\mathbf{u} \right)\cdot \nabla (R_h-T_h)T_h\mathrm{d}x
	+
	\frac{1}{2}\int_\Omega \mathbf{u} \cdot \nabla (R_h-T_h)(T -R_h)\mathrm{d}x
	\\
	+
	\frac{1}{2}\int_\Omega \mathbf{u} \cdot \nabla (R_h-T_h)(R_h-T_h)\mathrm{d}x.
\end{multline}
We now use that $\kappa_{-} \leq \kappa(r) \leq \kappa_{+}$ for every $r \in \mathbb{R}$, the Lipschitz property of $\kappa$, the Sobolev embeddings $H^1_0(\Omega) \hookrightarrow L^{6}(\Omega)$ and $\mathbf{H}^1_0(\Omega) \hookrightarrow \mathbf{L}^4(\Omega)$, the continuous and discrete stability bounds \eqref{eq:estimate_grad_u_coupled}--\eqref{eq:estimate_T_coupled} and \eqref{eq:estimate_grad_u_h}--\eqref{eq:estimate_T_h}, respectively, and the fact that $\mathbf{u} \in \mathbf{V}(\Omega)$
to obtain
\begin{multline*}\label{eq:first_estimate_heat}
	\kappa_{-}\|\nabla (T_h - R_h)\|_{\mathbf{L}^2(\Omega)}\leq \mathcal{L}_\kappa\mathcal{C}_{6 \hookrightarrow 2 } \|\nabla T \|_{\mathbf{L}^3(\Omega)}\|\nabla e_{T}\|_{\mathbf{L}^2(\Omega)}
	+
	\kappa_{+} \|\nabla (T - R_h)\|_{\mathbf{L}^2(\Omega)}
	\\ 
	+
	\mathcal{C}^2_{4 \hookrightarrow 2}\left(\kappa_{-}^{-1} \|g\|_{H^{-1}(\Omega)}\| \nabla \mathbf{e}_{\mathbf{u}}\|_{\mathbf{L}^2(\Omega)} + \nu_{-}^{-1}\|\mathbf{f}\|_{\mathbf{H}^{-1}(\Omega)}  \|\nabla (T - R_h)\|_{\mathbf{L}^2(\Omega)} \right).
\end{multline*}
The desired estimate for $\|\nabla e_{T}\|_{\mathbf{L}^2(\Omega)}$ follows from the triangle inequality and the assumption that $\mathfrak{M} = \mathfrak{M}(T) >0$. In fact, we have
\begin{multline}\label{eq:final_heat_error_estimate}
	\|\nabla e_{T}\|_{\mathbf{L}^2(\Omega)}
	\leq
	\mathfrak{M}^{-1}\left(\kappa_{-} + \kappa_{+} + \mathcal{C}^2_{4 \hookrightarrow 2}\nu_{-}^{-1}\|\mathbf{f}\|_{\mathbf{H}^{-1}(\Omega)} \right)\operatorname*{inf}_{R_h \in Y_h}\|\nabla(T-R_h)\|_{\mathbf{L}^2(\Omega)}\\
	+ (\mathfrak{M} \kappa_{-})^{-1}\mathcal{C}^2_{4\hookrightarrow 2} \|g\|_{H^{-1}(\Omega)}\|\nabla\mathbf{e}_{\mathbf{u}}\|_{\mathbf{L}^2(\Omega)}.
\end{multline}

\emph{Step 2. A priori bound for the pressure error:} We start with a simple application of the triangle inequality and write $\|e_\mathsf{p}\|_{L^2(\Omega)} \leq \|\mathsf{p} -\mathsf{q}_h\|_{L^2(\Omega)} + \|\mathsf{q}_h-\mathsf{p}_h\|_{L^2(\Omega)}$ for any $\mathsf{q}_h \in M_h$. It is therefore sufficient to control $\|\mathsf{q}_h-\mathsf{p}_h\|_{L^2(\Omega)}$. To do this, we first set $\mathbf{v}=\mathbf{v}_h\in \mathbf{X}_h$ in the first equation of problem \eqref{eq:modelweak} and subtract the first equation of problem \eqref{eq:modelweak_discrete} from it to obtain the following identity for any $\mathsf{q}_h \in M_h$:
\begin{multline}\label{eq:1st_presure_error_coupled}
	\int_{\Omega}\left(\mathsf{q}_h-\mathsf{p}_h\right) \operatorname{div} \mathbf{v}_h\mathrm{d}x = \int_{\Omega}\left(\mathsf{q}_h-\mathsf{p}\right) \operatorname{div} \mathbf{v}_h \mathrm{d}x +\int_{\Omega}(\mathbf{u}-\mathbf{u}_h)\cdot\mathbf{v}_h\mathrm{d}x\\
	+\int_\Omega (\nu(T)\nabla \mathbf{u}-\nu(T_h)\nabla \mathbf{u}_h)\cdot\nabla\mathbf{v}_h \mathrm{d}x + \int_{\Omega}(|\mathbf{u}|^{s-2}\mathbf{u}-|\mathbf{u}_h|^{s-2}\mathbf{u}_h)\cdot\mathbf{v}_h \mathrm{d}x
	\\ +\frac{1}{2}\int_\Omega((\mathbf{u} \cdot \nabla) \mathbf{u}-(\mathbf{u}_h \cdot \nabla) \mathbf{u}_h) \cdot\mathbf{v}_h \mathrm{d}x
	+\frac{1}{2}\int_\Omega \left( (\mathbf{u}_h \cdot \nabla)\mathbf{v}_h \cdot\mathbf{u}_h - (\mathbf{u} \cdot \nabla) \mathbf{v}_h\cdot\mathbf{u}\right) \mathrm{d}x
\end{multline}
for all $\mathbf{v}_h \in \mathbf{X}_h$. On the other hand, the inf-sup condition \eqref{eq:discrete_infsup} yields the existence of $\mathbf{w}_{h} \in \mathbf{X}_{h}$, so that $\operatorname{div} \mathbf{w}_h = \mathsf{q}_h - \mathsf{p}_h$ and $\|\nabla\mathbf{w}_h \|_{\mathbf{L}^2(\Omega)} \leq
(1/\tilde{\beta})\|\mathsf{q}_h-\mathsf{p}_h\|_{L^2(\Omega)}$. We therefore use \eqref{eq:1st_presure_error_coupled} with the particular function $\mathbf{w}_h$ as a test function, use the relations $\nu(T) \nabla \mathbf{u} - \nu(T_h) \nabla \mathbf{u}_h = (\nu(T) - \nu(T_h)) \nabla \mathbf{u} + \nu(T_h) \nabla (\mathbf{u}-\mathbf{u}_h)$ and $(\mathbf{u} \cdot \nabla) \mathbf{u} - (\mathbf{u}_h \cdot \nabla) \mathbf{u}_h = ((\mathbf{u} - \mathbf{u}_h) \cdot \nabla) \mathbf{u} + (\mathbf{u}_h \cdot \nabla) (\mathbf{u} - \mathbf{u}_h)$, and use basic estimates to obtain
\begin{multline}\label{eq:pressure_error_estimate}
	\|e_\mathsf{p}\|_{L^2(\Omega)}
	\leq
	\left[1 + \frac{1}{\tilde{\beta}}\right]\operatorname*{inf}_{\mathsf{q}_h \in M_h}
	\| \mathsf{p} - \mathsf{q}_h \|_{L^2(\Omega)}
	+
	\frac{1}{\tilde{\beta}}\mathcal{L}_\nu \mathcal{C}_{6 \hookrightarrow 2} \|\nabla e_{T}\|_{\mathbf{L}^2(\Omega)} \| \nabla \mathbf{u} \|_{\mathbf{L}^3(\Omega)}
	\\
	+ \frac{1}{\tilde{\beta}}
	\left(
	\mathcal{C}^2_{2 \hookrightarrow 2}
	+
	\nu_{+}
	+
	2\nu_{-}^{-1}\mathcal{C}_N\|\mathbf{f}\|_{\mathbf{H}^{-1}(\Omega)}
	+
	\mathcal{C}^2_{\mu \hookrightarrow 2 }\Gamma_{s,\tau}(\mathbf{f}) \right)\|\nabla \mathbf{e}_{\mathbf{u}}\|_{\mathbf{L}^2(\Omega)},
\end{multline}
where we have used \eqref{eq:estimates_a_weakly_cont} and \eqref{eq:Lambda_st} to control the Forchheimer term and obtain
\begin{equation}
\begin{aligned}
\label{eq:Gamma_st}
	\||\mathbf{u}_h| +  |\mathbf{u}|\|^{s-2}_{\mathbf{L}^{\tau(s-2)}(\Omega)}
	& \leq
    \mathcal{C}^{s-2}_{\tau(s-2) \hookrightarrow 2}
	\left( \| \nabla \mathbf{u}_h \|_{\mathbf{L}^{2}(\Omega)}
	+
	\| \nabla \mathbf{u} \|_{\mathbf{L}^{2}(\Omega)}
	\right)^{s-2}
	\\
	&
	\leq
	\mathcal{C}^{s-2}_{\tau(s-2) \hookrightarrow 2} 2^{s-2} \nu_{-}^{2-s} \|\mathbf{f}\|^{s-2}_{\mathbf{H}^{-1}(\Omega)}
	=:\Gamma_{s,\tau}(\mathbf{f}).
	\end{aligned}
\end{equation}
Here, $\mathcal{C}_{\tau(s-2) \hookrightarrow 2}$ is the best constant in $\mathbf{H}_0^1(\Omega) \hookrightarrow\mathbf{L}^{\tau(s-2)}(\Omega)$, $\mu = 6$, and $\tau = 3/2$.

\emph{Step 3. A priori bound for the velocity error:} We control $\|\nabla \mathbf{e}_{\mathbf{u}}\|_{\mathbf{L}^2(\Omega)}$. Our procedure is based on the arguments developed in step 1: We set $\mathbf{v} = \mathbf{v}_h\in \mathbf{X}_h$ in the first equation of \eqref{eq:modelweak} and subtract the first equation of \eqref{eq:modelweak_discrete} from it to obtain
\begin{multline}\label{eq:error_equations_coupled}
\int_\Omega (\nu(T)\nabla \mathbf{u}-\nu(T_h)\nabla \mathbf{u}_h)\cdot\nabla\mathbf{v}_h\mathrm{d}x
+
\frac{1}{2}
\int_\Omega[(\mathbf{u} \cdot \nabla) \mathbf{u}-(\mathbf{u}_h \cdot \nabla) \mathbf{u}_h] \cdot\mathbf{v}_h\mathrm{d}x\\
+
\frac{1}{2}
\int_\Omega [ (\mathbf{u}_h \cdot \nabla)\mathbf{v}_h \cdot\mathbf{u}_h - (\mathbf{u} \cdot \nabla) \mathbf{v}_h\cdot\mathbf{u}]
\mathrm{d}x
+
\int_{\Omega}\mathbf{e}_{\mathbf{u}}\cdot\mathbf{v}_h\mathrm{d}x\\
+\int_{\Omega}(|\mathbf{u}|^{s-2}\mathbf{u}-|\mathbf{u}_h|^{s-2}\mathbf{u}_h)\cdot\mathbf{v}_h\mathrm{d}x  -\int_\Omega
e_{\mathsf{p}}
\operatorname*{div} \mathbf{v}_h \mathrm{d}x =0 \quad \forall \mathbf{v}_h \in \mathbf{X}_h.
\end{multline}
Let $\mathbf{w}_h \in \mathbf{V}_h$. We rewrite the first term on the left-hand side of \eqref{eq:error_equations_coupled} as follows:
\begin{multline}\label{eq:nu_reform}
\int_\Omega (\nu(T)\nabla \mathbf{u}-\nu(T_h)\nabla \mathbf{u}_h)\cdot\nabla\mathbf{v}_h \mathrm{d}x = \int_\Omega \left(\nu(T)-\nu(T_h)\right) \nabla \mathbf{u} \cdot\nabla \mathbf{v}_h\mathrm{d}x
\\ 
+ \int_\Omega \nu(T_h)\nabla (\mathbf{u}-\mathbf{w}_h) \cdot\nabla \mathbf{v}_h\mathrm{d}x
		+ \int_\Omega \nu(T_h)\nabla(\mathbf{w}_h-\mathbf{u}_h) \cdot\nabla \mathbf{v}_h\mathrm{d}x.
\end{multline}
We now let $\mathsf{q}_h\in M_h$, set $\mathbf{v}_h = \mathbf{u}_h-\mathbf{w}_h\in \mathbf{V}_h$, and use \eqref{eq:nu_reform} to obtain
\begin{multline}\label{eq:error_equations_coupled_re}
\int_\Omega \nu(T_h)|\nabla(\mathbf{u}_h-\mathbf{w}_h)|^2\mathrm{d}x = \int_\Omega \left(\nu(T)-\nu(T_h)\right) \nabla \mathbf{u}\cdot \nabla (\mathbf{u}_h-\mathbf{w}_h)\mathrm{d}x
\\
+
\int_\Omega \nu(T_h)\nabla (\mathbf{u}-\mathbf{w}_h) \cdot\nabla (\mathbf{u}_h-\mathbf{w}_h)\mathrm{d}x
+
\frac{1}{2}\int_\Omega[(\mathbf{u} \cdot \nabla) \mathbf{u}-(\mathbf{u}_h \cdot \nabla) \mathbf{u}_h] \cdot(\mathbf{u}_h-\mathbf{w}_h)\mathrm{d}x
\\
+
\frac{1}{2}\int_\Omega [(\mathbf{u}_h \cdot \nabla)(\mathbf{u}_h-\mathbf{w}_h) \cdot\mathbf{u}_h - (\mathbf{u} \cdot \nabla) (\mathbf{u}_h-\mathbf{w}_h)\cdot\mathbf{u}]
\mathrm{d}x
+\int_{\Omega}\mathbf{e}_{\mathbf{u}}\cdot(\mathbf{u}_h-\mathbf{w}_h)\mathrm{d}x
\\
+\int_{\Omega}(|\mathbf{u}|^{s-2}\mathbf{u}-|\mathbf{u}_h|^{s-2}\mathbf{u}_h)\cdot(\mathbf{u}_h-\mathbf{w}_h)\mathrm{d}x - \int_\Omega \left( \mathsf{p}- \mathsf{q}_h \right) \operatorname*{div}(\mathbf{u}_h-\mathbf{w}_h)\mathrm{d}x.
\end{multline}
We now add $\pm \frac{1}{2}\int_{\Omega}(\mathbf{u}\cdot \nabla)\mathbf{u}_h\cdot(\mathbf{u}_h-\mathbf{w}_h)\mathrm{d}x$ and $\pm \frac{1}{2}\int_{\Omega}(\mathbf{u}\cdot \nabla)\mathbf{w}_h\cdot(\mathbf{u}_h-\mathbf{w}_h)\mathrm{d}x$ to rearrange the first difference of the convective terms as follows:
\begin{multline*}
	\frac{1}{2}\int_\Omega[(\mathbf{u} \cdot \nabla) \mathbf{u}-(\mathbf{u}_h \cdot \nabla) \mathbf{u}_h] \cdot(\mathbf{u}_h-\mathbf{w}_h)\mathrm{d}x = \frac{1}{2}\int_\Omega(\mathbf{u} \cdot \nabla) (\mathbf{u}-\mathbf{w}_h)\cdot(\mathbf{u}_h-\mathbf{w}_h)\mathrm{d}x\\+ \frac{1}{2}\int_\Omega(\mathbf{u} \cdot \nabla) (\mathbf{w}_h-\mathbf{u}_h) \cdot(\mathbf{u}_h-\mathbf{w}_h)\mathrm{d}x + \frac{1}{2}\int_\Omega((\mathbf{u}-\mathbf{u}_h )\cdot \nabla) \mathbf{u}_h \cdot(\mathbf{u}_h-\mathbf{w}_h)\mathrm{d}x.
\end{multline*}
A similar argument can be applied to the second difference of the convective terms by using the terms $\pm \frac{1}{2}\int_{\Omega}(\mathbf{u}\cdot \nabla)(\mathbf{u}_h-\mathbf{w}_h)\cdot \mathbf{u}_h\mathrm{d}x$ and $\pm \frac{1}{2}\int_{\Omega}(\mathbf{u}\cdot \nabla)(\mathbf{u}_h-\mathbf{w}_h)\cdot\mathbf{w}_h\mathrm{d}x$:
\begin{multline*}
	\frac{1}{2}\int_\Omega [(\mathbf{u}_h \cdot \nabla)(\mathbf{u}_h-\mathbf{w}_h) \cdot\mathbf{u}_h - (\mathbf{u} \cdot \nabla) (\mathbf{u}_h-\mathbf{w}_h)\cdot\mathbf{u}]
	\mathrm{d}x
	 =
	 \frac{1}{2}\int_\Omega((\mathbf{u}_h-\mathbf{u}) \cdot \nabla) (\mathbf{u}_h-\mathbf{w}_h)\cdot\mathbf{u}_h\mathrm{d}x
	 \\
	 + \frac{1}{2}\int_\Omega(\mathbf{u} \cdot \nabla) (\mathbf{u}_h-\mathbf{w}_h) \cdot(\mathbf{u}_h-\mathbf{w}_h)\mathrm{d}x
	 +
	 \frac{1}{2}\int_\Omega(\mathbf{u} \cdot \nabla) (\mathbf{u}_h-\mathbf{w}_h) \cdot(\mathbf{w}_h- \mathbf{u})\mathrm{d}x.
\end{multline*}
With these ingredients in hand, we invoke the assumptions of $\nu$ presented in \S \ref{sec:main_assump}, standard Sobolev embeddings, the bounds \eqref{eq:estimates_a_weakly_cont} and \eqref{eq:Gamma_st}, and the continuous and discrete stability bounds \eqref{eq:estimate_grad_u_coupled} and \eqref{eq:estimate_grad_u_h}, respectively, to conclude that
	\begin{multline}\label{eq:fluid_error_estimate}
		\nu_-\|\nabla ( \mathbf{u}_h-\mathbf{w}_h)\|_{\mathbf{L}^2(\Omega)} \leq \mathcal{L}_\nu \mathcal{C}_{6 \hookrightarrow 2} \|\nabla e_{T}\|_{\mathbf{L}^2(\Omega)} \| \nabla \mathbf{u} \|_{\mathbf{L}^3(\Omega)}+ \| \mathsf{p} - \mathsf{q}_h \|_{L^2(\Omega)}\\ 
		+ \left(\nu_+ + \nu_{-}^{-1} \mathcal{C}_N \|\mathbf{f}\|_{\mathbf{H}^{-1}(\Omega)}\right) \|\nabla ( \mathbf{u}-\mathbf{w}_h) \|_{\mathbf{L}^2(\Omega)} \\
		+ \left(\mathcal{C}^2_{2 \hookrightarrow 2}+\mathcal{C}^{2}_{\mu \hookrightarrow 2}\Gamma_{s,\tau}(\mathbf{f})+ \nu_{-}^{-1} \mathcal{C}_N \|\mathbf{f}\|_{\mathbf{H}^{-1}(\Omega)}\right)\|\nabla \mathbf{e}_{\mathbf{u}} \|_{\mathbf{L}^2(\Omega)}.
	\end{multline}
With this estimate at hand, we control $\|\nabla \mathbf{e}_{\mathbf{u}}\|_{\mathbf{L}^2(\Omega)}$ in view of the triangle inequality, the estimates \eqref{eq:final_heat_error_estimate} and \eqref{eq:fluid_error_estimate}, and the assumption $\mathfrak{N} = \mathfrak{N}(\mathbf{f},g,\mathbf{u})>0$. In fact, we have
\begin{multline}\label{eq:fluid_error_estimate_vel}
\|\nabla \mathbf{e}_{\mathbf{u}}\|_{\mathbf{L}^2(\Omega)} \leq  \frac{1}{\mathfrak{N}}\operatorname*{inf}_{\mathsf{q}_h \in M_h}  \| \mathsf{q}_h - \mathsf{p} \|_{L^2(\Omega)} \\
\frac{1}{\mathfrak{N}}\left(\frac{\kappa_{-}+\kappa_{+} +	\mathcal{C}^2_{4 \hookrightarrow 2}\nu_{-}^{-1}\|\mathbf{f}\|_{\mathbf{H}^{-1}(\Omega)}}{\mathfrak{M}}\right)\mathcal{L}_{\nu}\mathcal{C}_{6 \hookrightarrow 2} \|\nabla \mathbf{u}\|_{\mathbf{L}^3(\Omega)}\operatorname*{inf}_{R_h \in Y_h} \| \nabla (T - R_h) \|_{\mathbf{L}^2(\Omega)}
\\ + \left(\frac{\nu_{-} + \nu_{+} + \nu_{-}^{-1} \mathcal{C}_N \|\mathbf{f}\|_{\mathbf{H}^{-1}(\Omega)}}{\mathfrak{N}}\right)\operatorname*{inf}_{\mathbf{w}_h \in \mathbf{V}_h} \| \nabla (\mathbf{u} - \mathbf{w}_h) \|_{\mathbf{L}^2(\Omega)}.
\end{multline}

\emph{Step 4.} The desired estimate \eqref{eq:coupled_problem_best_approx} results from the combination of the bounds \eqref{eq:final_heat_error_estimate}, \eqref{eq:pressure_error_estimate} and \eqref{eq:fluid_error_estimate_vel}. This concludes the proof.
\end{proof}

Before presenting a bound in two dimensions, we introduce the quantities
\begin{equation}
\begin{aligned}
\mathfrak{P}(T)
	& := \kappa_{-} - \mathcal{L}_{\kappa}\mathcal{C}_{\varepsilon}\| \nabla T \|_{{\mathbf{L}}^{2+\varepsilon}(\Omega)} ,
\label{eq:small_data_coupled_error_2D}
\\
\mathfrak{Q}(\mathbf{f},g,\mathbf{u})
& :=
\nu_{-}
-
\nu_{-}^{-1}\mathcal{C}_N\|\mathbf{f}\|_{\mathbf{H}^{-1}(\Omega)}
-
\mathcal{C}^2_{2 \hookrightarrow 2}
-
\mathcal{C}^2_{\mu \hookrightarrow 2}\Gamma_{s,\tau}(\mathbf{f})
-
\mathfrak{R}(g,\mathbf{u}),
\\
\mathfrak{R}(g,\mathbf{u}) & := (\kappa_{-} \mathfrak{M})^{-1}\mathcal{L}_{\nu}\mathcal{C}_{\varepsilon}\mathcal{C}^2_{4\hookrightarrow 2}\|g\|_{H^{-1}(\Omega)}\| \nabla \mathbf{u} \|_{\mathbf{L}^{2+\epsilon}(\Omega)},
\end{aligned}
\end{equation}
for some $\varepsilon>0$. $\mathcal{C}_{\varepsilon}$ corresponds to the best constant in $H^1_0(\Omega) \hookrightarrow L^{2(2+\varepsilon)/\varepsilon}(\Omega)$.
\begin{theorem}[error bound $(d=2)$]
\label{thm:apriori_error_2D}
Let $d=2$, and let the assumptions of Theorems \ref{theorem_coupled_existence} and \ref{thm:uniq_coupled_2D_discrete} hold. If $\mathfrak{P}(T)>0$ and $\mathfrak{Q}(\mathbf{f},g,\mathbf{u})>0$, then we have the bound
\begin{multline}\label{eq:coupled_problem_best_approx_2D}
\|\nabla \mathbf{e}_{\mathbf{u}}\|_{\mathbf{L}^2(\Omega)}+\|\nabla e_{T}\|_{\mathbf{L}^2(\Omega)}+ \|e_{\mathsf{p}}\|_{L^2(\Omega)}\lesssim
\inf_{\mathbf{w}_h\in \mathbf{V}_h}\|\nabla(\mathbf{u}-\mathbf{w}_h)\|_{\mathbf{L}^2(\Omega)}
\\
+
\inf_{R_h\in Y_h} \|\nabla (T-R_h)\|_{\mathbf{L}^2(\Omega)}
+
\inf_{\mathsf{q}_h\in M_h}\|\mathsf{p}-\mathsf{q}_h\|_{L^2(\Omega)},
\end{multline}
with a hidden constant that is independent of $h$.
\end{theorem}
\begin{proof}
The proof follows from a slight modification of the arguments developed in the proof of Theorem \ref{thm:apriori_error_3D}, which essentially consists in using the Sobolev embedding $H^1_0(\Omega) \hookrightarrow L^{2(2+\varepsilon)/\varepsilon}(\Omega)$ instead of $H^1_0(\Omega) \hookrightarrow L^{6}(\Omega)$. For the sake of brevity, we omit the details.
\end{proof}

\subsection{A priori error bounds}
As a direct consequence of the best approximation results of Theorems \ref{thm:apriori_error_3D} and \ref{thm:apriori_error_2D}, approximation theory yields the following convergence rates for Taylor--Hood and mini element approximation.

\begin{theorem}[Taylor--Hood approximation]\label{theorem_apriori1}
Let the assumptions of Theorem \ref{thm:apriori_error_3D} and \ref{thm:apriori_error_2D} hold in three and two dimensions, respectively. If the solution $(\mathbf{u},\mathsf{p},T)$ to \eqref{eq:modelweak} belongs to $\mathbf{H}^{3}(\Omega)\cap \mathbf{H}_0^1(\Omega) \times H^2(\Omega)\cap L_0^2(\Omega) \times H^{3}(\Omega)\cap H_0^1(\Omega)$, then
\begin{align*}
\| \nabla \mathbf{e}_{\mathbf{u}} \|_{\mathbf{L}^2(\Omega)} + \| e_{\mathsf{p}}\|_{L^2(\Omega)}
+
\| \nabla e_{T}\|_{\mathbf{L}^2(\Omega)}
\lesssim h^2
\left( \|\mathbf{u}\|_{\mathbf{H}^{3}(\Omega)}+ \|\mathsf{p}\|_{H^{2}(\Omega)}+\|T\|_{H^{3}(\Omega)}  \right).
\end{align*}
\end{theorem}

\begin{theorem}[mini element approximation]
Let the assumptions of Theorem \ref{thm:apriori_error_3D} and \ref{thm:apriori_error_2D} hold in three and two dimensions, respectively. If the solution $(\mathbf{u},\mathsf{p},T)$ to \eqref{eq:modelweak} belongs to $\mathbf{H}^{2}(\Omega)\cap \mathbf{H}_0^1(\Omega) \times H^1(\Omega)\cap L_0^2(\Omega) \times H^{2}(\Omega)\cap H_0^1(\Omega)$, then
\begin{align*}
\| \nabla \mathbf{e}_{\mathbf{u}} \|_{\mathbf{L}^2(\Omega)} + \| e_{\mathsf{p}}\|_{L^2(\Omega)}+\| \nabla e_{T}\|_{\mathbf{L}^2(\Omega)} \lesssim h \left( \|\mathbf{u}\|_{\mathbf{H}^{2}(\Omega)}+ \|\mathsf{p}\|_{H^{1}(\Omega)}+\|T\|_{H^{2}(\Omega)}  \right).
\end{align*}
\end{theorem}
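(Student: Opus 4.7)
The plan is to deduce the estimate as an immediate corollary of the quasi-best approximation bound \eqref{eq:coupled_problem_best_approx} (in three dimensions) or \eqref{eq:coupled_problem_best_approx_2D} (in two dimensions), plugging in standard finite element interpolation error estimates associated with the mini-element and the continuous $\mathbb{P}_1$ space used for the temperature. Concretely, the ingredients I would gather are: (i) a quasi-interpolation operator (e.g., Scott--Zhang or Clément) $\Pi_h^{\mathrm{SZ}}: H^1_0(\Omega) \to Y_h$ satisfying $\|\nabla(T - \Pi_h^{\mathrm{SZ}} T)\|_{\mathbf{L}^2(\Omega)} \lesssim h \|T\|_{H^2(\Omega)}$ for $T \in H^2(\Omega) \cap H^1_0(\Omega)$; (ii) the analogous interpolant for the pressure satisfying $\|\mathsf{p} - \Pi_h^{\mathrm{SZ}} \mathsf{p}\|_{L^2(\Omega)} \lesssim h \|\mathsf{p}\|_{H^1(\Omega)}$ for $\mathsf{p} \in H^1(\Omega) \cap L^2_0(\Omega)$; and (iii) a vector-valued interpolant into $\mathbf{X}_h$ (the $\mathbb{P}_1 \oplus \mathbb{B}$ space) that reproduces piecewise linears and hence yields $\|\nabla(\mathbf{u} - \mathbf{\Pi}_h \mathbf{u})\|_{\mathbf{L}^2(\Omega)} \lesssim h \|\mathbf{u}\|_{\mathbf{H}^2(\Omega)}$ for $\mathbf{u} \in \mathbf{H}^2(\Omega) \cap \mathbf{H}^1_0(\Omega)$.

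The first step is to bound the infima in \eqref{eq:coupled_problem_best_approx} (respectively \eqref{eq:coupled_problem_best_approx_2D}) over $Y_h$ and $M_h$ directly by substituting the quasi-interpolants in (i) and (ii), producing the terms $h \|T\|_{H^2(\Omega)}$ and $h \|\mathsf{p}\|_{H^1(\Omega)}$, respectively. The main subtlety lies in the velocity term, since the infimum in \eqref{eq:coupled_problem_best_approx} is taken over the constrained subspace $\mathbf{V}_h$ rather than the full space $\mathbf{X}_h$. To bridge this gap I would invoke the standard consequence of the discrete inf-sup condition \eqref{eq:discrete_infsup} (see, e.g., Girault--Raviart \cite{MR851383}, Chapter II, Theorem 1.1), which yields
\begin{equation*}
\inf_{\mathbf{w}_h \in \mathbf{V}_h}\|\nabla(\mathbf{u}-\mathbf{w}_h)\|_{\mathbf{L}^2(\Omega)}
\lesssim
\inf_{\mathbf{v}_h \in \mathbf{X}_h}\|\nabla(\mathbf{u}-\mathbf{v}_h)\|_{\mathbf{L}^2(\Omega)},
\end{equation*}
with hidden constant proportional to $1 + 1/\tilde{\beta}$, valid for $\mathbf{u} \in \mathbf{V}(\Omega)$. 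Combining this with the bubble-enriched piecewise linear interpolation in (iii) delivers $\inf_{\mathbf{w}_h \in \mathbf{V}_h}\|\nabla(\mathbf{u}-\mathbf{w}_h)\|_{\mathbf{L}^2(\Omega)} \lesssim h \|\mathbf{u}\|_{\mathbf{H}^2(\Omega)}$.

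The hardest part is really only administrative: verifying that a suitable piecewise-linear-preserving interpolant into the bubble-enriched space $\mathbf{X}_h$ exists with the right approximation order, since the Lagrange interpolant is not well-defined for $\mathbf{H}^2$ functions in three dimensions without boundary considerations, and one cannot directly Scott--Zhang into a non-nodal space. A clean workaround is to compose a Scott--Zhang operator $\mathbf{X}_h^{\mathbb{P}_1} \to \mathbf{X}_h^{\mathbb{P}_1}$ with a bubble correction that enforces any required divergence constraint; this is exactly the Fortin operator whose existence underlies \eqref{eq:discrete_infsup} for the mini element (see Guermond--Ern, Lemma 4.20 \cite{Guermond-Ern}), and its $\mathbf{H}^1$-stability and first-order approximation property on $\mathbf{H}^2 \cap \mathbf{H}^1_0$ are standard. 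Assembling the three bounds into \eqref{eq:coupled_problem_best_approx} (or \eqref{eq:coupled_problem_best_approx_2D}) yields the claimed $O(h)$ a priori estimate and concludes the proof.
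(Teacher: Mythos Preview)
Your proposal is correct and follows essentially the same approach as the paper, which simply states that the result is ``a direct consequence of the best approximation results of Theorems \ref{thm:apriori_error_3D} and \ref{thm:apriori_error_2D}'' combined with standard approximation theory. Your treatment is in fact more detailed than the paper's, spelling out the passage from $\mathbf{V}_h$ to $\mathbf{X}_h$ via the discrete inf-sup condition and the choice of interpolants; one minor inaccuracy is your claim that the Lagrange interpolant is not well-defined for $\mathbf{H}^2$ functions in three dimensions---in fact $H^2(\Omega)\hookrightarrow C^0(\bar\Omega)$ when $d=3$, so this concern is unnecessary, though your Scott--Zhang workaround is of course still valid.
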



\section{Numerical examples}
\label{sec:numericalexample}

In this section, we present several numerical experiments that illustrate the performance of the developed finite element method. These examples were performed with a code that we implemented in \texttt{C++}. All matrices were assembled exactly, and global linear systems were solved with the multifrontal massively parallel sparse direct solver (MUMPS) \cite{MUMPS1,MUMPS2}. For the visualization of finite element approximations, we used the open-source application ParaView \cite{Ayachit2015ThePG}.

Given a mesh $\T_h$, we approximate the velocity and pressure of the fluid with the Taylor--Hood element \eqref{TH:vel_space}--\eqref{TH:press_space} and the temperature of the fluid with functions in the space described in \eqref{eq:discrete_temp_space} with $\mathfrak{r} = 2$. We solve the nonlinear system \eqref{eq:modelweak_discrete} using the fixed-point strategy described in \textbf{Algorithm 1}. To investigate experimental convergence rates, we define the total number of degrees of freedom as follows: $\text{Ndof}:=\text{dim}(\mathbf{X}_h) + \text{dim}(M_h) + \text{dim}(Y_h).$

\begin{algorithm}[ht]
\caption{\textbf{Fixed-point iteration}}
\label{Algorithm1}
\textbf{Input:} Mesh $\mathscr{T}$, initial guess $(\mathbf{u}_{h}^{0},\mathsf{p}_{h}^{0},T_{h}^{0}) \in \mathbf{X}_h \times M_h \times Y_h$, $\mathbf{f}\in\mathbf{H}^{-1}(\Omega)$, $g\in H^{-1}(\Omega)$, $s \in [3,4]$, and $\textrm{tol} = 10^{-6}$;
\\
$\boldsymbol{1}$: For $i\geq 0$, find $(\mathbf{u}_h^{i+1},\mathsf{p}_h^{i+1},T_h^{i+1}) \in \mathbf{X}_h \times M_h \times Y_h$ such that
\begin{multline*}
\mathfrak{a}(T_h^{i};\mathbf{u}_h^{i+1}, \mathbf{v}_h)
+
a_{SN}(\mathbf{u}_h^{i};\mathbf{u}_h^{i+1},\mathbf{v}_h)
+
a_{F}(\mathbf{u}_h^{i};\mathbf{u}_h^{i+1},\mathbf{v}_h)
+
b(\mathbf{v}_h,\mathsf{p}_h^{i+1})
=\langle \mathbf{f}, \mathbf{v}_h \rangle,
\\
b(\mathbf{u}_h^{i+1},\mathsf{q}_h)=0
\qquad \forall (\mathbf{v}_h,\mathsf{q}_h) \in \mathbf{X}_h \times M_h.
\end{multline*}
Then, $T_h^{i+1}\in Y_h$ is found as the solution of the problem
\begin{align*}
\mathfrak{b}(T_h^{i};T_h^{i+1}, S_h)+a_{ST}(\mathbf{u}_h^{i+1};T_h^{i+1},S_h)=\langle g,S_h\rangle
\qquad
\forall S_h \in Y_h.
\end{align*}

$\boldsymbol{2}$: If $|(\mathbf{u}_{h}^{i+1}, \mathsf{p}_{h}^{i+1}, T_{h}^{i+1})-(\mathbf{u}_{h}^{i}, \mathsf{p}_{h}^{i}, T_{h}^{i} )| > \textrm{tol}$,  set $i \leftarrow i + 1$ and go to step $\boldsymbol{1}$. Otherwise, return $(\mathbf{u}_{h}, \mathsf{p}_{h}, T_{h}) = (\mathbf{u}_{h}^{i+1}, \mathsf{p}_{h}^{i+1}, T_{h}^{i+1} )$. Here, $|\cdot|$ denotes the Euclidean norm.
\end{algorithm}

Let us now show the convergence of this algorithm. For this purpose, we define the errors $\mathbf{e}^{i+1}_{\mathbf{u}_h}:=\mathbf{u}_h-\mathbf{u}^{i+1}_h$, $e^{i+1}_{\mathsf{p}_h}:=\mathsf{p}_h-\mathsf{p}^{i+1}_h$, $e^{i+1}_{T_h}:=T_h-T^{i+1}_h$, and
		\begin{align}
			\label{eq:algorithm_3D_constants_2}
			\mathfrak{V}(\mathbf{f},g)
			&:=
			\left(1 + \frac{\mathcal{C}^2_{4 \hookrightarrow2}\| g \|_{H^{-1}(\Omega)}}{\kappa_{-}^2}\right)\left(\frac{\nu_{-}^{-1}\mathcal{C}_N \|\mathbf{f}\|_{\mathbf{H}^{-1}(\Omega)}+2\mathcal{C}^2_{\mu \hookrightarrow 2}\Gamma_{s,\tau}(\mathbf{f})}{\nu_{-}-\mathcal{C}^2_{\mu \hookrightarrow 2}\Gamma_{s,\tau}(\mathbf{f})-\mathcal{C}^{2}_{2 \hookrightarrow2}}\right),
			\\\label{eq:algorith_3D_constants_1}
			\mathfrak{T}(\mathbf{f},g,\mathbf{u}_h,&T_h)
			:= \frac{\mathcal{L}_{\kappa}\mathcal{C}_{6 \hookrightarrow 2}\| \nabla T_h \|_{{\mathbf{L}}^3(\Omega)}}{\kappa_{-}} + \frac{\mathcal{L}_{\nu}\mathcal{C}_{6 \hookrightarrow 2}\mathcal{C}^2_{4\hookrightarrow 2}\|g\|_{H^{-1}(\Omega)}\| \nabla \mathbf{u}_h \|_{\mathbf{L}^3(\Omega)}}{\kappa_{-}^2\left(\nu_{-}-\mathcal{C}^2_{\mu \hookrightarrow 2}\Gamma_{s,\tau}(\mathbf{f})-\mathcal{C}^{2}_{2 \hookrightarrow2}\right)},
			\\
			\label{eq:algorithm_3D_constants_3}
			\mathfrak{A}(\mathbf{f},g,\mathbf{u}_h,&T_h)
			:=	\mathfrak{T}(\mathbf{f},g,\mathbf{u}_h,T_h) + \frac{\mathcal{L}_{\nu}\mathcal{C}_{6 \hookrightarrow 2}\| \nabla \mathbf{u}_h \|_{{\mathbf{L}}^3(\Omega)}}{\nu_{-}-\mathcal{C}^2_{\mu \hookrightarrow 2}\Gamma_{s,\tau}(\mathbf{f})-\mathcal{C}^{2}_{2 \hookrightarrow2}}.
	\end{align}
	
	We are now ready to present and prove the convergence of our algorithm.
	
	\begin{theorem}[convergence of the \textbf{Algorithm} 1]
		Let $d=3$, and let the assumptions of Theorems \ref{th:existence_coupled_problem_discrete}, \ref{thm:uniq_coupled_3D_discrete} and \ref{thm:apriori_error_3D} hold. If  $\Theta:=\max\left\{\mathfrak{V(\mathbf{f},g)},\mathfrak{A}(\mathbf{f},g,\mathbf{u}_h,T_h)\right\}<1$, then
		\begin{equation*}
			\lim _{i \rightarrow \infty}\|\nabla e^{i}_{T_h}\|_{\mathbf{L}^2(\Omega)}=0, 
			\qquad 
			\lim _{i \rightarrow \infty}\|\nabla \mathbf{e}^{i}_{\mathbf{u}_h}\|_{\mathbf{L}^2(\Omega)}=0, 
			\qquad
			\lim _{i \rightarrow \infty}\| e^{i}_{\mathsf{p}_h}\|_{L^2(\Omega)}=0.
		\end{equation*}
	\end{theorem}
	\begin{proof}
As a first step, we subtract from the discrete problem \eqref{eq:modelweak_discrete} the discrete equations that occur in \textbf{Algorithm 1}. We set $\mathbf{v}_h = \mathbf{u}^{i+1}_h-\mathbf{u}_h$ and $S_h=T^{i+1}_h-T_h$ into the resulting system and use standard estimates to obtain
\begin{multline}
\label{eq:first_alg_bound}
\mathfrak{W} \|\nabla \mathbf{e}^{i+1}_{\mathbf{u}_h}\|_{\mathbf{L}^2(\Omega)} \leq \mathcal{L}_\nu \mathcal{C}_{6 \hookrightarrow 2} \| \nabla \mathbf{u}_h \|_{\mathbf{L}^{3}(\Omega)} \|\nabla e^{i}_{T_h}\|_{\mathbf{L}^2(\Omega)}\\
+
\left(\nu_{-}^{-1}\mathcal{C}_N \|\mathbf{f}\|_{\mathbf{H}^{-1}(\Omega)}+2\mathcal{C}^2_{\mu \hookrightarrow 2}\Gamma_{s,\tau}(\mathbf{f})\right)\|\nabla \mathbf{e}^{i}_{\mathbf{u}_h}\|_{\mathbf{L}^2(\Omega)},
\end{multline}
and
\begin{multline}
\label{eq:second_alg_bound_aux}
\kappa_{-}\|\nabla e^{i+1}_{T_h}\|_{\mathbf{L}^2(\Omega)} \leq  \mathcal{L}_\kappa \mathcal{C}_{6 \hookrightarrow 2} \| \nabla T_h \|_{\mathbf{L}^{3}(\Omega)}\|\nabla e^{i}_{T_h}\|_{\mathbf{L}^2(\Omega)} \\
+ 
\mathcal{C}^2_{4\hookrightarrow 2}\kappa_{-}^{-1}\|g\|_{H^{-1}(\Omega)}\|\nabla \mathbf{e}^{i+1}_{\mathbf{u}_h}\|_{\mathbf{L}^2(\Omega)}.
\end{multline}
Here, $\mathfrak{W}:=\nu_{-}-\mathcal{C}^2_{\mu \hookrightarrow 2}\Gamma_{s,\tau}(\mathbf{f})-\mathcal{C}^{2}_{2 \hookrightarrow2}$. Note that $\mathfrak{W}>0$; see Remark \ref{remark:Assumptions_smallness}. If we replace the estimate \eqref{eq:first_alg_bound} in \eqref{eq:second_alg_bound_aux}, we obtain
\begin{multline}\label{eq:second_alg_bound}
\|\nabla e^{i+1}_{T_h}\|_{\mathbf{L}^2(\Omega)} \leq \mathfrak{T}(\mathbf{f},g,\mathbf{u}_h,T_h)\|\nabla e^{i}_{T_h}\|_{\mathbf{L}^2(\Omega)} \\
+ 
\frac{\mathcal{C}^2_{4 \hookrightarrow2}\| g \|_{H^{-1}(\Omega)}}{\kappa_{-}^2\mathfrak{W}}\left(
\nu_{-}^{-1}\mathcal{C}_N \|\mathbf{f}\|_{\mathbf{H}^{-1}(\Omega)}
+
2\mathcal{C}^2_{\mu \hookrightarrow 2}\Gamma_{s,\tau}(\mathbf{f})\right)\|\nabla \mathbf{e}^{i}_{\mathbf{u}_h}\|_{\mathbf{L}^2(\Omega)}.
\end{multline}
If we add the estimates \eqref{eq:first_alg_bound} and \eqref{eq:second_alg_bound}, we finally arrive at
\begin{multline*}\label{eq:fluid_error_estimate_algorithm}
\|\nabla\mathbf{e}^{i+1}_{\mathbf{u}_h}\|_{\mathbf{L}^2(\Omega)} + \|\nabla e^{i+1}_{T_h}\|_{\mathbf{L}^2(\Omega)} \leq \mathfrak{A}(\mathbf{f},g,\mathbf{u}_h,T_h)\|\nabla e^{i}_{T_h}\|_{\mathbf{L}^2(\Omega)} + \mathfrak{V}(\mathbf{f},g)\|\nabla\mathbf{e}^{i}_{\mathbf{u}_h}\|_{\mathbf{L}^2(\Omega)}.
\end{multline*}

Let us now introduce $\{z_{i}\}_{i=0}^{\infty}$, where
$
z_{i}:=\|\nabla\mathbf{e}^{i}_{\mathbf{u}_h}\|_{\mathbf{L}^2(\Omega)} + \|\nabla e^{i}_{T_h}\|_{\mathbf{L}^2(\Omega)}
$
for $i \geq 0$. Since $\Theta:=\max\left\{\mathfrak{V(\mathbf{f},g)},\mathfrak{A}(\mathbf{f},g,\mathbf{u}_h,T_h)\right\}<1$, we can conclude that $z_{i}\to 0$ as $i \rightarrow \infty$ because $z_i\leq \Theta^i z_0$ for $i \geq 1$. These arguments show that
\[
 \|\nabla\mathbf{e}^{i}_{\mathbf{u}_h}\|_{\mathbf{L}^2(\Omega)} \rightarrow 0,
 \qquad
 \|\nabla e^{i}_{T_h}\|_{\mathbf{L}^2(\Omega)} \rightarrow 0,
 \qquad i \uparrow \infty.
\]
The convergence result $\| e^{i}_{\mathsf{p}_h}\|_{L^2(\Omega)} \rightarrow 0$ as $i \uparrow \infty$ follows from the inf-sup condition. This completes the proof.
	\end{proof}

\subsection{Experimental convergence rates}
\label{subsec:error_convergence}
Let $\Omega=(0,1)^2$, $\kappa(T)=4+\sin(T)$, $\nu(T)=1+e^{-T^2}$, and $s\in\{3,3.5,4\}$. We note that $\nu(\cdot)$ and $\kappa(\cdot)$ fulfill the assumptions from Section \ref{sec:main_assump}. The data $\mathbf{f}$ and $g$ are chosen so that the exact solution of \eqref{eq:modelweak} is
\begin{multline*}
\mathbf{u}(x_1,x_2) = (-x_1^2(x_1 -1)^2x_2(x_2-1)(2x_2 -1),x_2^2(x_2 -1)^2x_1(x_1-1)(2x_1 -1)),
\\ 
\mathsf{p}(x_1,x_2) = x_1 x_2 (1-x_1)(1-x_2)-1/36,
\quad
T(x_1,x_2) = x_1^2 x_2^2 (1-x_1)^2(1-x_2)^2.
\end{multline*}

In Figure \ref{fig:test_01}, we show the experimental convergence rates for the errors that occur when approximating the velocity field, pressure, and temperature variables in appropriate norms. We observe \emph{optimal} experimental convergence rates for all errors and all values of the considered paremeter $s$ and confirm computationally the theoretical results obtained in Theorem \ref{theorem_apriori1}.

\begin{figure}[!ht]
\centering
\includegraphics[trim={0 0cm 0 0cm},clip,width=13.0cm,height=4.5cm,scale=0.66]{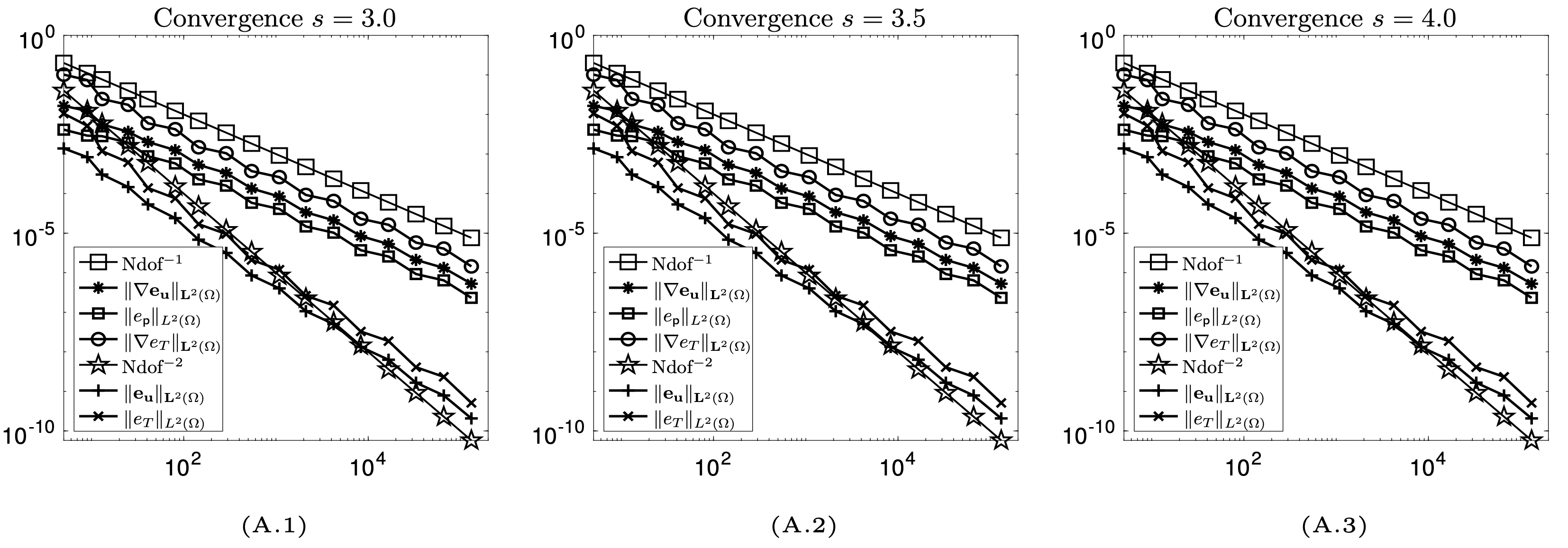} \\
\caption{Example 1. Performance of the developed finite element discretization scheme: Taylor--Hood approximation for the velocity and pressure variables and the space of continuous piecewise quadratic functions to approximate the temperature variable. We present experimental convergence rates for the errors that occur
when approximating the velocity field, pressure, and temperature variables in appropriate norms for $s=3.0$ (A.1), $s=3.5$ (A.2), and $s=4.0$ (A.3).}
\label{fig:test_01}
\end{figure}

\subsection{Heated lid-driven cavity flow problem}

The lid-driven cavity flow problem is probably one of the most studied problems in the field of computational fluid dynamics. The simplicity of the geometry of the cavity flow simplifies the numerical implementation and also the consideration of different boundary conditions. Even though the problem looks simple in many respects, the flow in a cavity retains all the flow physics, with counter-rotating vortices occurring inside of the cavity.

We consider the domain $\Omega=(0,1)^2$, the diffusion coefficient $\kappa(T)=4+\sin(T)$, the viscosity coefficient $\nu(T)=1/400 + \exp(-T^2)$, and the parameter $s\in\{3.0,3.5,4.0\}$. The forcing terms are $\mathbf{f}=(0,0)^{\intercal}$ and $g=0$. In this numerical example, we omit the reaction term $\mathbf{u}$ in \eqref{eq:modelweak} and investigate the behavior of the solutions of the coupled problem beyond the theory that we have presented since we use the homogeneous Neumann and nonhomogeneous Dirichlet boundary conditions described in Figure \ref{fig:domain_square}.

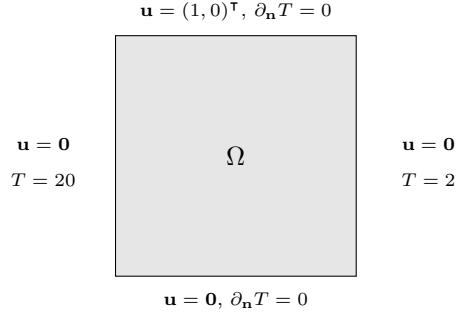
\begin{figure}[h!]
\centering
\begin{tikzpicture}[scale=3.2]
    \draw[black, fill=black!10] (0, 0) rectangle (1, 1);
    
    \node at (1.3, 0.55) {\scriptsize $\mathbf{u}=\mathbf{0}$};
    \node at (1.3, 0.4) {\scriptsize $T=2$};

    \node at (-0.3, 0.55) {\scriptsize $\mathbf{u}=\mathbf{0}$};
    \node at (-0.3, 0.4) {\scriptsize $T=20$};

    \node at (0.5, -0.1) {\scriptsize $\mathbf{u}=\mathbf{0}$, $\partial_{\mathbf{n}} T=0$};

    \node at (0.5, 1.1) {\scriptsize $\mathbf{u}=(1,0)^{\intercal}$, $\partial_{\mathbf{n}} T=0$};
    \node at (0.5, 0.5) {$\Omega$};
\end{tikzpicture}
\caption{Boundary conditions for the cavity flow problem in the domain $\Omega=(0,1)^2$.}
\label{fig:domain_square}
\end{figure}	

In Figure \ref{fig:test_03}, we show the streamlines for the velocity field, the contour lines of the pressure, and the temperature variable. To illustrate the results, we consider a mesh with $65.536$ elements, which corresponds to $33.025$ nodes. We note that for each value of $s$, a circulation pattern is established within the cavity, driven by the temperature gradient between the right and left components of the boundary. This temperature distribution leads to convective flow, which is characteristic of cavity problems where temperature fluctuations along the side walls contribute to fluid motion. In this context, we show how the parameter $s$ affects the velocity and pressure of the fluid and, in particular,
the position of the counter-rotating vortices. In particular, the center of the main recirculation is located at $(0.65234,0.75782)$ for $s=3.0$, $(0.65234,0.75781)$ for $s=3.5$, and $(0.58398,0.68164)$ for $s=4.0$. Note also that the same effect occurs for the secondary recirculation, which is located in the lower right part of the cavity. Finally, we mention that in all cases, the pressure exhibits a singular behavior in the upper right and left corners of the cavity.

\begin{figure}[!ht]
\centering
\begin{minipage}[b]{0.327\textwidth}\centering
\includegraphics[trim={0.5cm 3cm 0 0cm},clip,width=4.4cm,height=3.95cm,scale=0.66]{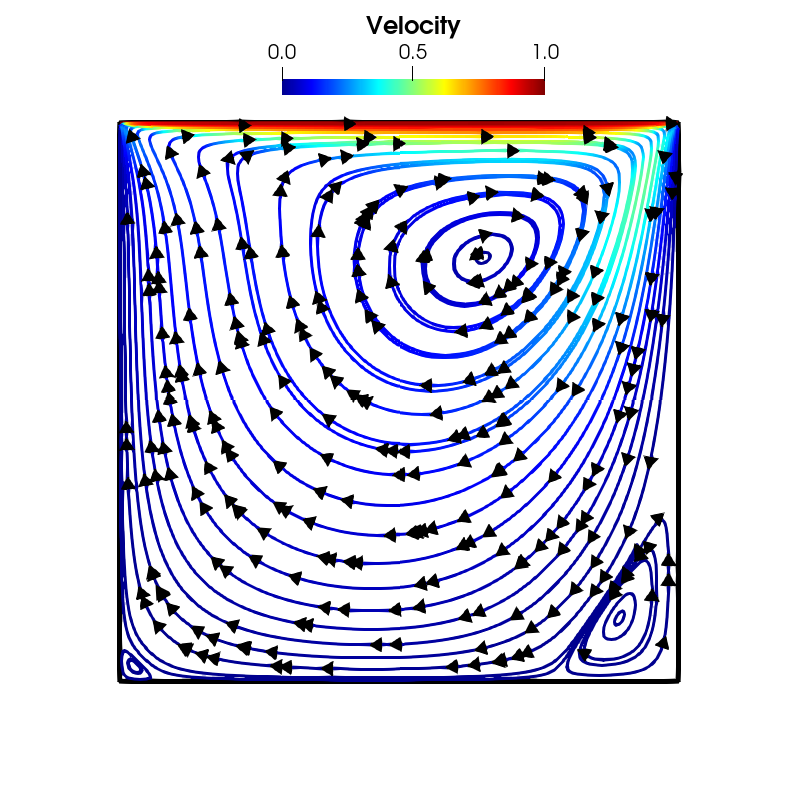} \\
\quad \tiny{(C.1)}
\end{minipage}
\begin{minipage}[b]{0.327\textwidth}\centering
\includegraphics[trim={0.5cm 3cm 0 0cm},clip,width=4.4cm,height=3.95cm,scale=0.66]{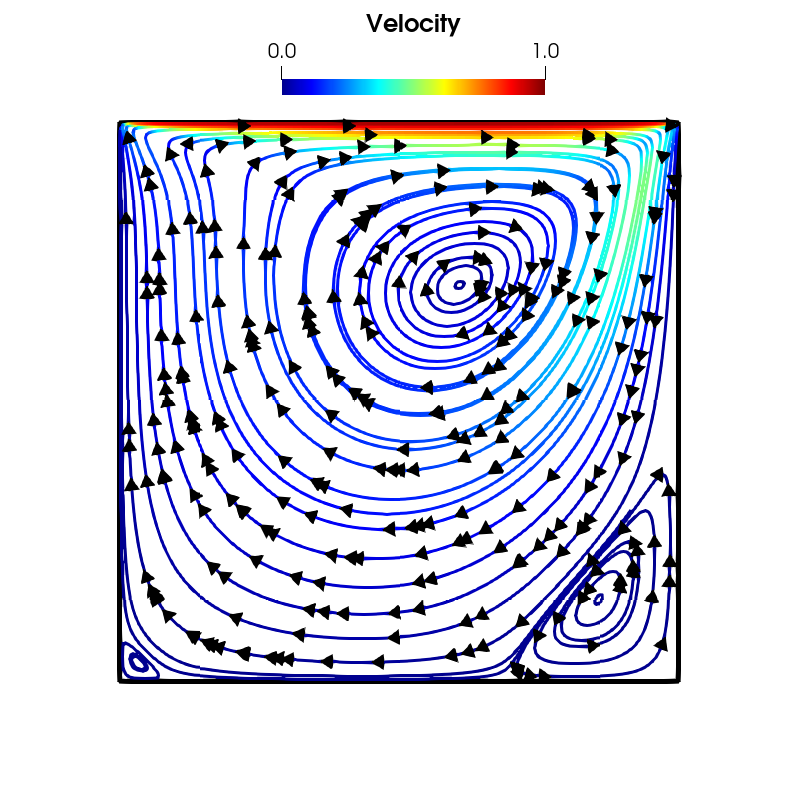} \\
\quad \tiny{(C.2)}
\end{minipage}
\begin{minipage}[b]{0.327\textwidth}\centering
\includegraphics[trim={0.5cm 3cm 0 0cm},clip,width=4.4cm,height=3.95cm,scale=0.66]{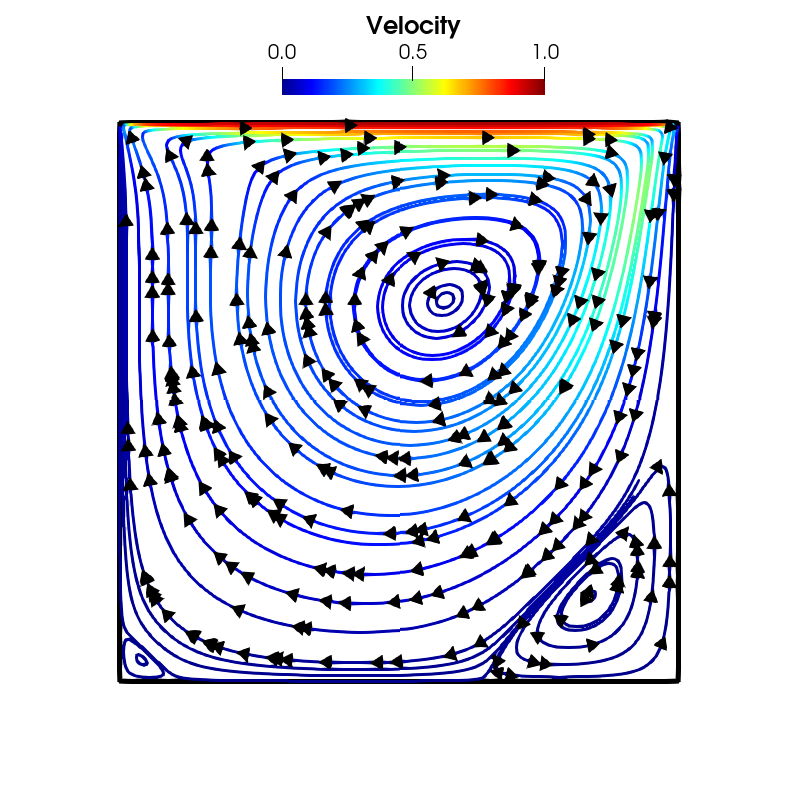} \\
\quad \tiny{(C.3)}
\end{minipage}
\\~\\
\begin{minipage}[b]{0.327\textwidth}\centering
\includegraphics[trim={4cm 3.0cm 4.0cm 0cm},clip,width=3.25cm,height=4.05cm,scale=0.66]{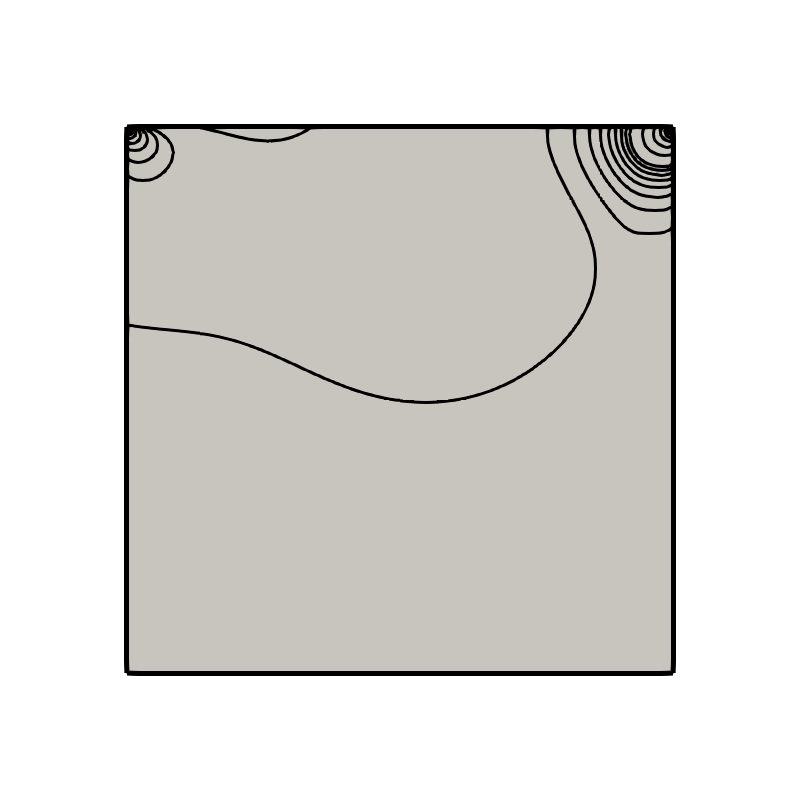} \\
\quad \tiny{(C.4)}
\end{minipage}
\begin{minipage}[b]{0.327\textwidth}\centering 
\includegraphics[trim={4cm 3.0cm 4.0cm 0cm},clip,width=3.25cm,height=4.05cm,scale=0.66]{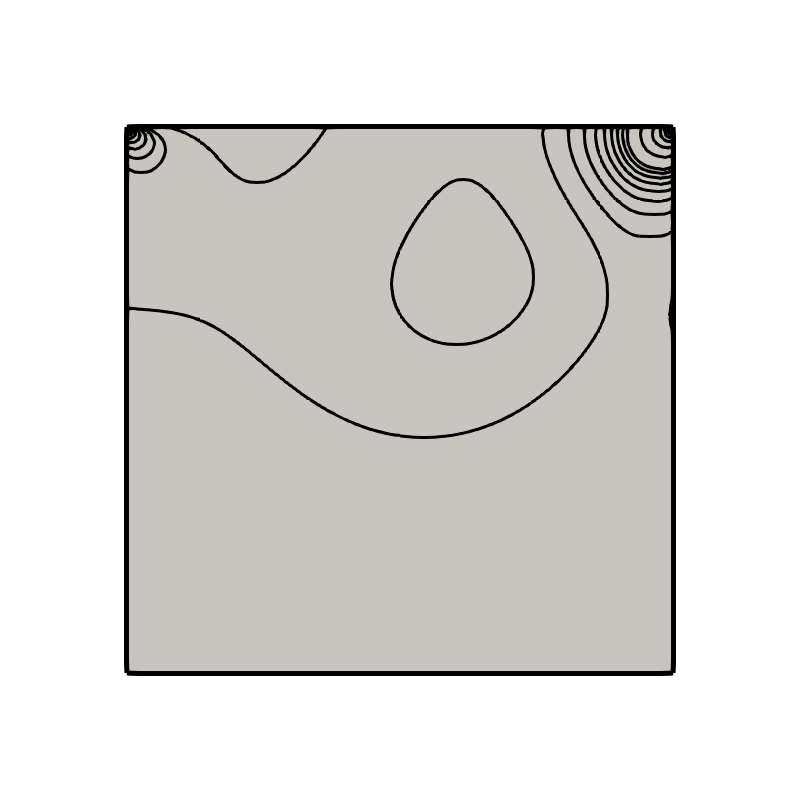} \\
\quad \tiny{(C.5)}
\end{minipage}
\begin{minipage}[b]{0.327\textwidth}\centering  
\includegraphics[trim={4cm 3.0cm 4.0cm 0cm},clip,width=3.25cm,height=4.05cm,scale=0.66]{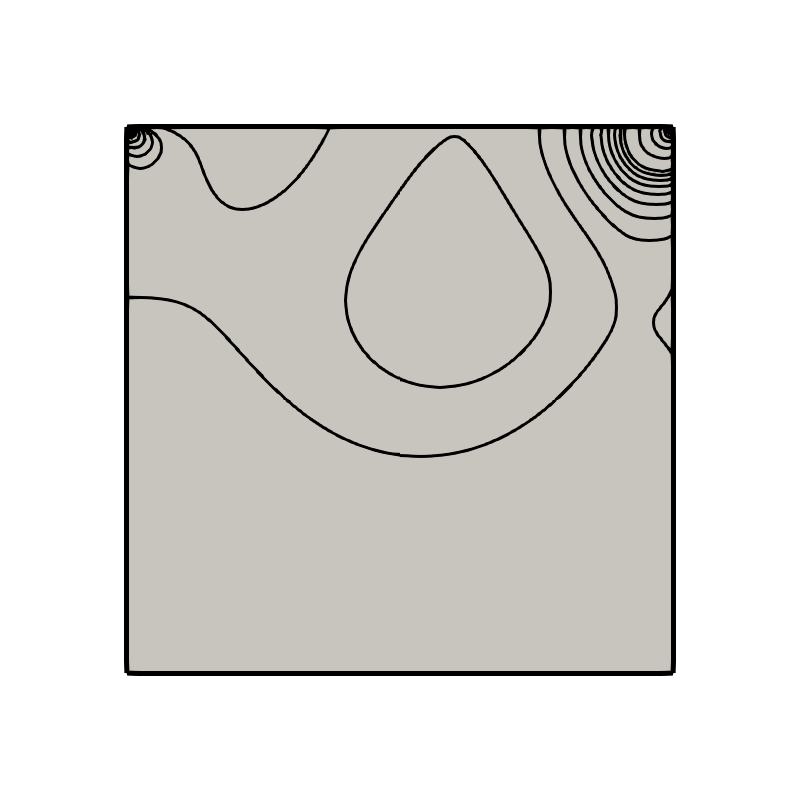} \\
\quad \tiny{(C.6)}
\end{minipage}
\\~\\
\begin{minipage}[b]{0.327\textwidth}\centering
\includegraphics[trim={4cm 3.0cm 4.0cm 0cm},clip,width=3.25cm,height=4.05cm,scale=0.66]{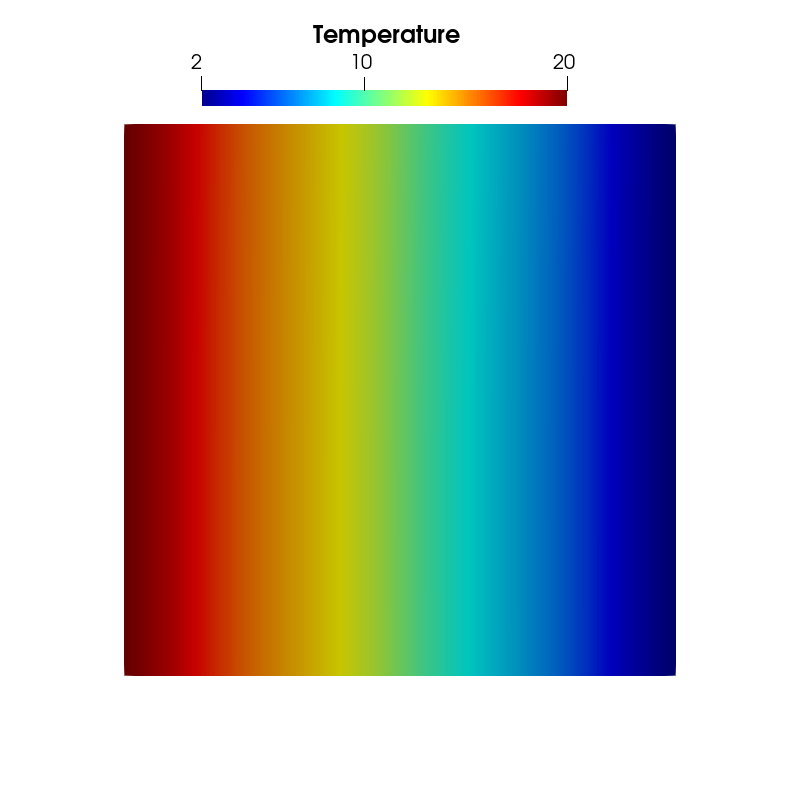} \\
\quad \tiny{(C.7)}
\end{minipage}
\begin{minipage}[b]{0.327\textwidth}\centering 
\includegraphics[trim={4cm 3.0cm 4.0cm 0cm},clip,width=3.25cm,height=4.05cm,scale=0.66]{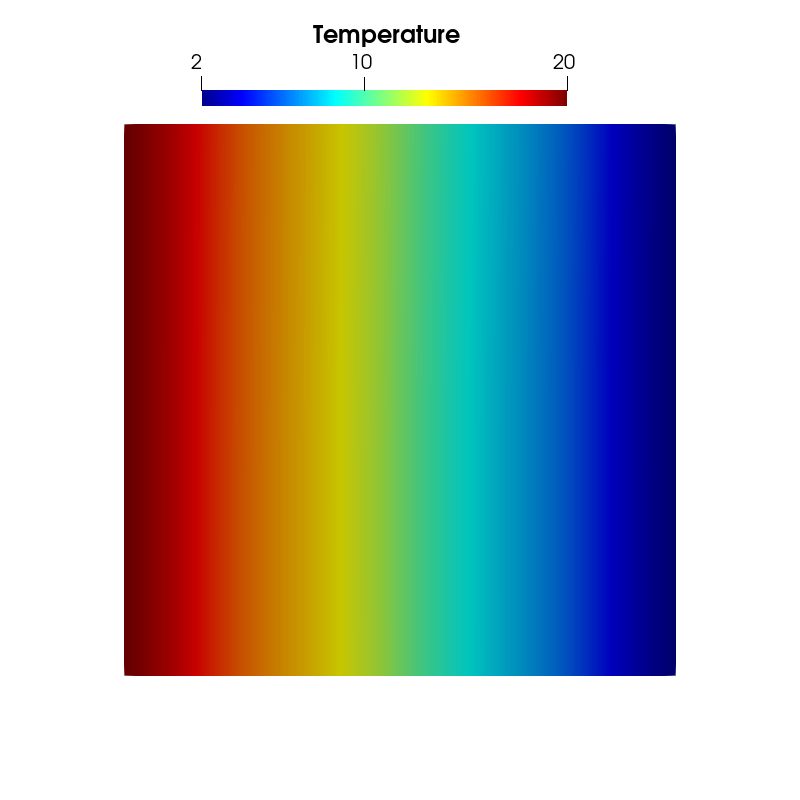} \\
\quad \tiny{(C.8)}
\end{minipage}
\begin{minipage}[b]{0.327\textwidth}\centering  
\includegraphics[trim={4cm 3.0cm 4.0cm 0cm},clip,width=3.25cm,height=4.05cm,scale=0.66]{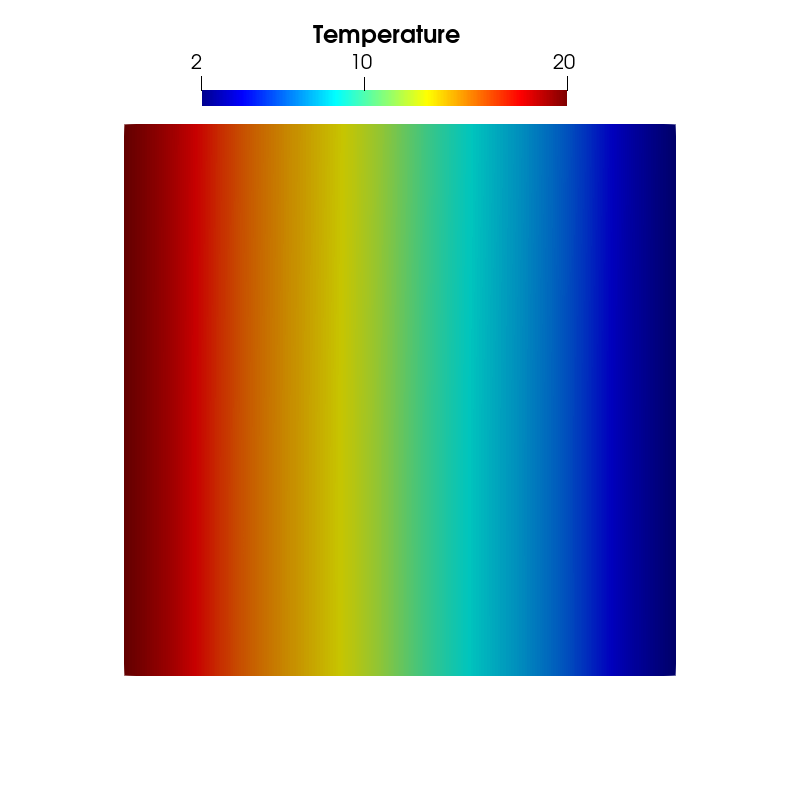} \\
\quad \tiny{(C.9)}
\end{minipage}

\caption{Example 2. Streamlines of the velocity field obtained for $s=3.0$ (C.1), $s=3.5$ (C.2), and $s=4.0$ (C.3), the contour lines of the pressure for $s=3.0$ (C.4), $s=3.5$ (C.5), and  $s=4.0$ (C.6), and temperature variable for $s=3.0$ (C.7), $s=3.5$ (C.8), $s=4.0$ (C.9).}
\label{fig:test_03}
\end{figure}

%

\bibliographystyle{siamplain}
\bibliography{biblio_sin_url}
\end{document}